\documentclass[11pt,reqno]{amsart}
\usepackage{a4wide}
\usepackage{amsmath,amsfonts,amssymb,bbm}
\usepackage{verbatim}
\usepackage[shortlabels]{enumitem}
\usepackage[T1]{fontenc}
\usepackage[unicode,hypertexnames=false,colorlinks=true,linkcolor=blue,citecolor=blue]{hyperref}
\usepackage{esint}
\usepackage{hyperref}
\usepackage[capitalise, nameinlink, noabbrev]{cleveref} 
\usepackage{mathrsfs}

\hypersetup{colorlinks,breaklinks,
	linkcolor=[rgb]{0,0,0},
	citecolor=[rgb]{0,0,0},
	urlcolor=[rgb]{0,0,0}}

\usepackage{xfrac,xcolor,color,graphicx}
\newcommand{\be}{\begin{equation}}
	\newcommand{\ee}{\end{equation}}
\theoremstyle{plain} 

\newtheorem{theorem}{Theorem}[section]
\newtheorem{proposition}[theorem]{Proposition}

\newtheorem{lemma}[theorem]{Lemma}
\newtheorem{definition}[theorem]{Definition}

\newtheorem{remark}[theorem]{Remark}

\newcommand{\res}{\mathop{\hbox{\vrule height 7pt width .5pt depth 0pt
			\vrule height .5pt width 6pt depth 0pt}}\nolimits}

\newcommand{\R}{\mathbb{R}}

\newcommand{\N}{\mathbb{N}}

\newcommand{\eps}{\varepsilon}

\newcommand{\HH}{\mathcal{H}}

\DeclareMathOperator{\dive}{div}
\newcommand{\bea}{\begin{equation*}\begin{aligned}}
		\newcommand{\eea}{\end{aligned}\end{equation*}}

\crefname{subsection}{subsection}{subsections}

\numberwithin{equation}{section}

\numberwithin{equation}{section}

\title{Uniqueness of one-phase cones with isolated singularity}

\author[M. Carducci]{Matteo Carducci}\thanks{}
\address {Matteo Carducci \newline \indent
	Classe di Scienze, Scuola Normale Superiore \newline \indent
	Piazza dei Cavalieri 7, 56126 Pisa - ITALY}
\email{\href{mailto:matteo.carducci@sns.it}{matteo.carducci@sns.it}}
\author[B. Velichkov]{Bozhidar Velichkov}\thanks{}
	\address {Bozhidar Velichkov \newline \indent
		Dipartimento di Matematica, Universit\`a di Pisa \newline \indent
		Largo B. Pontecorvo 5, 56127 Pisa - ITALY}
	\email{\href{mailto:bozhidar.velichkov@unipi.it}{bozhidar.velichkov@unipi.it}}
\begin{document}
	
	\subjclass[2010] {
	}
	
	\keywords{Regularity, free boundary, variational solutions, \L ojasiewicz inequality, epiperimetric inequality}
	\subjclass{35R35}
    \begin{abstract}
        We prove uniqueness of the blow-up at every singular point of the one-phase free boundary problem for which one blow-up has an isolated singularity. 
        The result applies to Lipschitz stationary solutions and does not require any integrability assumption on the cone. In this sense, our result completes the picture for uniqueness of tangent cones at isolated singularities in the one-phase problem.
        
        Inspired by Simon's work in the minimal surface setting, we prove an infinite dimensional \L ojasiewicz inequality for the spherical Weiss' energy. This yields an epiperimetric inequality for non-minimizing solutions, which leads to the uniqueness result.
    \end{abstract}
	\maketitle
	\tableofcontents
\section{Introduction}
Free boundary problems arise naturally in analysis, geometry and mathematical physics, and concern the regularity of interfaces whose structure is itself part of the unknown. One of the most studied free boundary problems is the \emph{one-phase problem}.
In its simplest form, one looks for non-negative functions $u:B_1\to\R$ satisfying the overdetermined problem
\be\label{eq:one-phase-problem}\Delta u=0\quad \text{in }\Omega_u\cap B_1,\qquad
     |\nabla u|=1\quad \text{on }\partial \Omega_u\cap B_1,\ee where $\Omega_u:=\{u>0\}$. 
     Since both the function $u$ and its positivity set $\Omega_u$ are unknown, the boundary $\partial\Omega_u\cap B_1$ is called \emph{free boundary}.

Originally motivated by models in fluid mechanics and combustion theory, the one-phase problem has also revealed close connections with the theory of minimal surfaces, shape optimization, harmonic measures and semilinear elliptic equations \cite{Desilva-minimal,traizet,jerisonkamburov,mazzoleni-terracini-velichkov,kenigtoro,fernandez-ros-semilinear}.

A classical way to construct solutions of \eqref{eq:one-phase-problem} is to minimize the Alt-Caffarelli energy functional
\be\label{eq:functional}J(u):=\int_{B_1}|\nabla u|^2\,dx+|\{u>0\}\cap B_1|,\ee among non-negative functions with prescribed boundary values. 
The functional \eqref{eq:functional} was originally introduced in the seminal work \cite{AltCaffarelli:OnePhaseFreeBd} by Alt and Caffarelli. Since then, regularity theory for minimizers has been developed in several directions.
These include regularity outside the singular set \cite{AltCaffarelli:OnePhaseFreeBd,caffarelli1,caffarelli2,DeSilva:FreeBdRegularityOnePhase}, low-dimensional rigidity and construction of singular cones \cite{CaffarelliJerisonKenig04:NoSingularCones3D,DeSilvaJerison09:SingularConesIn7D,JerisonSavin15:NoSingularCones4D}, rectifiability of the singular set \cite{edelen-englestein},
uniqueness of blow-ups at isolated singularities \cite{esv}, foliations around singular cones \cite{DJS,esv23} 
and generic regularity results \cite{generic-alt-phillips,fernandezyu-linearized}.
See also the monographs \cite{CaffarelliSalsa:GeomApproachToFreeBoundary, Velichkov:RegularityOnePhaseFreeBd} for an introduction to the problem.

More recently, regularity theory for several classes of non-minimizing solutions has received increasing attention \cite{weiss2003,jerisonkamburov,graphical,KamburovWang,kriventsov-weiss,ERZ25,ChanFernandezFigalliSerra,rosoton-pasarin,FernandezSerra,guidojonasnahon}.
Such non-minimizing solutions naturally appear in problems involving domain variations \cite{hhp11,lww}, fluid mechanics \cite{bsg76,cg11}, saddle points of the energy \cite{basultokamburov,FernandezFloritSerra}, Perron solutions \cite{Caffarelli3,jerisonkamburov19,feldman}, flame propagation \cite{caffarellivazquez}, and through limits of classical solutions and singular perturbation procedures \cite{weiss2003,kriventsov-weiss}. 
Nevertheless, their regularity theory is much less developed. 

\medskip

In this paper we consider \emph{variational solutions} of the one-phase problem \eqref{eq:one-phase-problem} (see \cref{def:variational-sol}). Roughly speaking, they are non-negative Lipschitz functions, which are harmonic in their positivity set, and stationary for the Alt-Caffarelli functional \eqref{eq:functional} under inner variations.

This class of solutions is very large, and contains minimizers, as well as weak solutions in the sense of \cite{AltCaffarelli:OnePhaseFreeBd} (see \cref{subsec:examples}). However, differently from the latter two classes, the variational solutions do not satisfy, in general, the usual non-degeneracy assumption or density estimates, which makes their analysis more involved. Their compactness and regularity properties were recently studied by Kriventsov and Weiss in \cite{kriventsov-weiss}.

Variational solutions satisfy the Weiss' monotonicity formula \cite{weiss98}. 
As a consequence, given a variational solution $u\in H^1(B_1)$ with free boundary point $0\in\partial\Omega_u$, the rescalings $u_r(x):=r^{-1}u(rx)$ converge, up to subsequences, to a $1$-homogeneous global variational solution $b$. 
Such a limit is called a \emph{blow-up} of $u$ at the origin. A priori, different sequences of scales may lead to different blow-up limits. 
The problem of uniqueness of the blow-up, namely the fact that $b$ is independent of the subsequence, is a central question in geometric analysis and free boundary problems; indeed, it allows to deduce asymptotic behavior of solutions near the singular point and an asymptotic description of the free boundary from a classification of tangent cones. 

\subsection{Main results}

For variational solutions of the one-phase problem, at regular free boundary points, namely if one blow-up is of the form $(x_d)_+$ up to a rotation, the blow-up is unique. This follows from the classical improvement of flatness theorem \cite{DeSilva:FreeBdRegularityOnePhase}, once we know that variational solutions are viscosity solutions in a neighborhood of regular points, and that the free boundaries converge to $\{x_d=0\}$ (see \cite[Section 8.1]{kriventsov-weiss} or \cref{lemma:existence-blow-up}).

Regarding the singular set, the situation is much more involved.
For minimizers of the Alt-Caffarelli functional \eqref{eq:functional}, when one blow-up is a cone with an isolated singularity (see \cref{def:cone-isol-sing}), uniqueness was proved by Engelstein, Spolaor and the second named author in \cite{esv} (see also \cite{EdSV}). More recently, Engelstein, Restrepo and Zhao \cite{ERZ25} established a similar result for the weak solutions of the one-phase problem, in the sense of \cite{AltCaffarelli:OnePhaseFreeBd}, under a density assumption on the zero phase and, more significantly, 
requiring that the limiting cone is integrable through rotations. 

Our main result establishes uniqueness of the blow-up at every cone with isolated singularity, for the general class of variational solutions of the one-phase problem.

 \begin{theorem}[Uniqueness of the blow-up]\label{thm:uniqueness}
    Let $u\in H^1(B_1)$ be a variational solution of the one-phase problem with Lipschitz constant $L>0$, and let $b$ be a one-phase cone with isolated singularity. Suppose that $b$ is a blow-up of $u$ at $0\in\partial\Omega_u$, then $b$ is the unique blow-up and there exists $r_0>0$ such that $$\|u_r-b\|_{L^\infty(B_1)}\le C|\log r|^{-\alpha}\quad\text{for every }r\in(0,r_0),$$ for some $C>0$ depending only on $d$, $b$, $L$ and some $\alpha\in(0,1)$ depending only on $d$, $b$.
   
    Moreover, the free boundary $\partial\Omega_u\cap B_{r_0}$ is a $C^{1,\log}$ graph over $\partial\Omega_b\cap B_{r_0}$.
    
    Finally, if $b$ is integrable (see \cref{def:integrability}), we can replace $|\log r|^{-\alpha}$ and $C^{1,\log}$ with $r^\alpha$ and $C^{1,\alpha}$ respectively.
\end{theorem}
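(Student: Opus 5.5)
The plan is to follow Simon's strategy, using the Weiss energy as the Lyapunov functional. The argument is organized around an epiperimetric-type inequality derived from a \L ojasiewicz inequality for the spherical Weiss energy.

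First I set up the spherical functional. For a trace $c$ on $\partial B_1$, let $\mathcal{F}(c)$ be the Weiss energy of its $1$-homogeneous extension. Near the trace $c_b$ of $b$, I parametrize competitors by a normal graph over the regular, compact, smooth free boundary $\partial\Omega_b\cap\partial B_1$ together with a function on $\Omega_b\cap\partial B_1$. This is possible precisely because the singularity of $b$ is isolated. The functional becomes analytic in this (Hölder or Sobolev) chart. Its linearization is a Fredholm operator: the spherical Laplacian with the Robin-type boundary condition of the one-phase linearized problem. A Lyapunov--Schmidt reduction onto the finite-dimensional kernel then gives the infinite-dimensional \L ojasiewicz inequality
\[
|\mathcal{F}(c)-\mathcal{F}(c_b)|^{1-\gamma}\le C\|\nabla\mathcal{F}(c)\|,\qquad \gamma\in(0,1/2],
\]
for $c$ close to $c_b$ in $C^{2,\beta}$. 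If $b$ is integrable, the reduced analytic function is constant on the kernel manifold, which gives $\gamma=1/2$.

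Second, I convert this into an epiperimetric (\L ojasiewicz-type) decay for the Weiss energy $W(u,r)$. I write $W'(r)=\frac{2}{r}\int_{\partial B_1}|x\cdot\nabla u_r-u_r|^2$. The defect $W(u,r)-W(b)$ is controlled by the spherical energy of the trace of $u_r$, and the \L ojasiewicz inequality relates it to the size of the radial derivative. Here the key point is that stationarity of $u$, rather than minimality, already encodes the first variation, so no competitor construction is needed. This yields a differential inequality
\[
(W(u,r)-W(b))^{2-2\gamma}\lesssim rW'(r).
\]
Integrating it gives $W(u,r)-W(b)\lesssim|\log r|^{-(1-2\gamma)^{-1}}$, and power decay when $\gamma=1/2$. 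A dyadic Cauchy--Schwarz argument then bounds $\int_{r}^{r_0}\|\partial_s u_s\|_{L^2(\partial B_1)}\,ds$ by a power of the energy gap. This gives the convergence rate of $u_r\to b$, first in $L^2(\partial B_1)$ and then in $L^\infty$ by Lipschitz interpolation, together with uniqueness.

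The main obstacle is the regime where the preceding steps apply. The \L ojasiewicz inequality needs $u_r$ to be $C^{2,\beta}$-close to $b$ on an annulus, as a graph over $\partial\Omega_b$, for every $r$ in a whole interval and not only along a subsequence. Variational solutions lack nondegeneracy, so I first use the Kriventsov--Weiss compactness. The point is to show that, for $u_r$ close to $b$ in $L^\infty$, the free boundary in the annulus $B_2\setminus B_{1/2}$ is Hausdorff close to $\partial\Omega_b$, where $b$ is regular. There $u$ is a viscosity solution, and improvement of flatness (De Silva) upgrades closeness to $C^{2,\beta}$ graphicality. A continuity argument on $r$ then ensures the closeness propagates: the decay estimate keeps $u_r$ in the good neighborhood for all smaller $r$ once it enters it at a small enough scale $r_0$, which is possible because $b$ is a blow-up. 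Finally, applying the annular graphical estimates with the logarithmic (resp. power) rate at each scale and summing dyadically gives the $C^{1,\log}$ (resp. $C^{1,\alpha}$) graph description of $\partial\Omega_u\cap B_{r_0}$ over $\partial\Omega_b$.
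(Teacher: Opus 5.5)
Your architecture matches the paper's: a \L ojasiewicz inequality for the spherical Weiss energy via Lyapunov--Schmidt, exponent $\tfrac12$ under integrability, conversion into decay of $E(r):=W(u_r)-W(b)$, and a continuity argument in which Kriventsov--Weiss compactness and improvement of flatness give graphicality in annuli. The gap is the step that carries the whole argument, which you assert rather than prove. You claim $(W(u_r)-W(b))^{2-2\gamma}\lesssim rW'(r)$, i.e.\ that the gradient of $\mathcal F$ at the slice $\phi_r=u_r|_{\partial B_1}$ is bounded by $\|x\cdot\nabla u_r-u_r\|_{L^2(\partial B_1)}$ at the same radius. Your only justification is that stationarity encodes the first variation. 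This is exactly where the one-phase problem lacks Simon's gradient-flow structure. Where $u$ is classical, writing $\Delta u=0$ and $|\nabla u|=1$ in polar coordinates gives
\[
\Delta_\theta\phi_r+(d-1)\phi_r=-r^2\partial_{rr}\phi_r-(d+1)\,r\partial_r\phi_r\ \text{ in } D_{\zeta_r},\qquad 1-|\nabla_\theta\phi_r|^2=(r\partial_r\phi_r)^2\ \text{ on }\partial D_{\zeta_r}.
\]
So the gradient at radius $r$ is controlled by three quantities: $\|r^2\partial_{rr}\phi_r\|_{L^2(D_{\zeta_r})}$, the free-boundary trace $\|r\partial_r\phi_r\|_{L^2(\partial D_{\zeta_r})}$, and $\|r\partial_r\phi_r\|_{L^2(D_{\zeta_r})}$. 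Only the last one equals $(rW'(r)/2)^{1/2}$. Nothing in your proposal bounds the first two by the third at a single radius, so the differential inequality does not follow.

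The missing idea is an estimate of $\partial_t q$, $\nabla_\theta q$ and the free-boundary trace of $q:=r\partial_r\phi_r$ (with $t=\log r$) in terms of $q$ itself. Such an estimate is available only after averaging over a range of scales. In the paper, $q$ solves $\partial_{tt}q+d\,\partial_tq+\Delta_\theta q+(d-1)q=0$ in the moving domain. Differentiating $|\nabla u|=1$ shows that $h=x\cdot\nabla u-u$ satisfies $\partial_\nu h=Hh$, so $q$ obeys a Robin condition $\partial_{\bar\nu}q=\beta q$ with $\beta$ bounded on the space--time free boundary. A Caccioppoli inequality for this problem, plus a uniform trace inequality, then yields
\[
\int_{5\rho/8}^{7\rho/8}\frac1r\|\nabla_{\rm red}\mathcal F(\phi_r)\|^2\,dr\le C\int_{\rho/2}^{\rho}\frac1r\int_{\partial B_1}(x\cdot\nabla u_r-u_r)^2\,d\HH^{d-1}\,dr .
\]
Combine this with the equipartition identity $\frac1d(\mathcal F(\phi_r)-\mathcal F(b))=E(r)+\frac1d\int_{\partial B_1}(x\cdot\nabla u_r-u_r)^2$ and with $E(r)\ge E(\rho/2)$. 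The result is only the dyadic inequality $E(\rho/2)\le(1-\eps E(\rho/2)^{1-2\gamma})E(\rho)$, with $\gamma$ your \L ojasiewicz exponent. It must be iterated through a sequence lemma rather than integrated as an ODE.

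Two smaller points:
\begin{itemize}
\item To know that $u$ is a viscosity solution in the annulus, you must also exclude free boundary points of density $\omega_d$ there. The paper does this via convergence of the Weiss energies at nearby points.
\item Your full-chart \L ojasiewicz inequality would have to handle a mixed-order linearization. The paper avoids this with the exact splitting $\mathcal F(\phi)=\mathcal F(b)+\mathcal G(\zeta,\sigma)+\mathcal Q_\zeta(\phi_>)$: it applies Lyapunov--Schmidt only to $\mathcal G$ and treats $\mathcal Q_\zeta$ by the spectral gap.
\end{itemize}
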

\smallskip

Let us compare \cref{thm:uniqueness} with the previous uniqueness results \cite{esv,ERZ25}. 
On the one hand, \cite{esv} treats arbitrary cones with isolated singularities, including non-integrable ones, but only in the minimizing setting. In general, the convergence is logarithmic, while a polynomial rate is obtained under an additional integrability assumption (different from the one considered in this paper; see also the discussion below).

On the other hand, \cite{ERZ25} applies to a class of non-minimizing solutions, but requires that the cone $b$ is integrable through rotations, as well as a non-degeneracy property and a lower-density assumption on the zero phase of the solution. In this case, the convergence to the blow-up is polynomial.

In \cref{thm:uniqueness}, we extend the uniqueness results in \cite{esv,ERZ25} to the larger class of variational solutions, in particular without requiring non-degeneracy or density estimates of the solution $u$, or the integrability of the cone $b$. We prove uniqueness of the blow-up for every cone with isolated singularity, with a logarithmic rate of convergence. If $b$ is assumed to be integrable (or integrable through rotations), we can improve the convergence rate to a polynomial one.

We point out that our notion of integrability, introduced in \cref{def:integrability}, is the natural analogue of the classical notion from minimal surface theory. Roughly speaking, we require that every $1$-homogeneous Jacobi field arises as an
infinitesimal generator of families of one-phase cones. When this family is given by rotations of the cone, we recover the integrability through rotations considered in \cite{ERZ25}. Notice that this notion of integrability through rotations is different from the one considered in \cite{esv}, and is closer in spirit to the linearized equation (see \cite{CaffarelliJerisonKenig04:NoSingularCones3D,JerisonSavin15:NoSingularCones4D,fernandezyu-linearized}). For more details, we refer to the discussion in \cref{rem:differences-integrability}. 

\subsubsection{Examples of cones with isolated singularity} 
Several examples of $1$-homogeneous solutions with an isolated singularity are known in the literature. Caffarelli, Jerison and Kenig \cite{CaffarelliJerisonKenig04:NoSingularCones3D} introduced examples of non-trivial axially symmetric cones, which are not minimizing in dimension $d\le 6$. 
In dimension $d=7$, De Silva and Jerison \cite{DeSilvaJerison09:SingularConesIn7D} proved that the corresponding cone is minimizing; moreover, as shown in \cite{esv,ERZ25}, it is integrable through rotations. Later, Hong \cite{Hong} constructed Lawson-type singular cones whose spherical free boundaries are products of spheres; the stability and the minimality of these cones were studied recently by Firester, Tsiamis and Wang \cite{ftw1,ftw2}. 

In \cite{wang-hong}, Wang and Hong constructed a one-phase cone in dimension
five whose spherical free boundary consists of regular level set of a Cartan-M\"{u}zner isoparametric cubic polynomial. More generally, Wang \cite{wang-cartan} constructed three
families of one-phase cones associated with arbitrary isoparametric foliations of the sphere.
These isoparametric families were recently revisited by Firester, Tsiamis and Zhao \cite{ftz}, who prove uniqueness within each prescribed foliation and study their geometry and
densities. The stability and minimality of some of these cones were investigated in \cite{carduccivelichkovcartan}.
Finally, Hines, Kolesar and McGrath \cite{lavoroa3} constructed new classes of cones in dimension three and four, which are invariant with respect to the action of discrete groups. 

It is currently unknown whether the cones constructed in \cite{Hong,wang-hong,wang-cartan,lavoroa3} are integrable. More generally, no classification of singular one-phase cones is currently available, and it is not
known whether all cones with isolated singularity are integrable. In particular, \cref{thm:uniqueness} establishes the uniqueness of the tangent cones without imposing any structural assumptions, such as stability or integrability, on the singular cone. 

\subsubsection{Examples of variational solutions}\label{subsec:examples}
The uniqueness result in \cref{thm:uniqueness} applies to the very large class of variational solutions, in the sense of \cref{def:variational-sol}.  
Hereafter we discuss some examples of variational solutions for which \cref{thm:uniqueness} applies.

First, weak solutions in the sense of \cite{AltCaffarelli:OnePhaseFreeBd} are variational solutions.
Indeed, if $u$ is such a weak solution, then $\HH^{d-1}(\partial\Omega_u\setminus\partial^\ast\Omega_u)=0$ (see \cite[Lemma 5.3]{AltCaffarelli:OnePhaseFreeBd}) and the reduced free boundary $\partial^\ast\Omega_u$ is locally smooth (see \cite[Theorem 8.4]{AltCaffarelli:OnePhaseFreeBd}). Moreover, since $\Delta u=\HH^{d-1}\res \partial^\ast\Omega_u,$ the free boundary condition $|\nabla u|=1$ holds classically on $\partial^\ast\Omega_u$. Arguing as in the proof of \cite[Theorem 5.1]{weiss98} (see also \cite[Proposition 5.2]{rosoton-pasarin}), one can integrate by parts away from a covering of $\partial\Omega_u\setminus\partial^\ast\Omega_u$, and let the size of the covering tend to zero, thereby obtaining that $u$ is stationary. Since $u$ is also Lipschitz, then $u$ is a variational solution.

  Second, weak solutions in the sense of \cite{ERZ25} also belong to the class of variational solutions, since they are weak solutions in the sense of \cite{AltCaffarelli:OnePhaseFreeBd}, with a density estimate on the zero-phase.

Another class of variational solutions is given by Perron solutions of the one-phase problem, in the sense of \cite{Caffarelli3}. 
They are viscosity solutions in the sense of \cref{def:viscosity-sol} and are constructed as an infimum over a suitable class of supersolutions.
 For more details, we refer to \cite{Caffarelli3,CaffarelliSalsa:GeomApproachToFreeBoundary,DESILVA-perron,cv24-fully}.
    The Perron solutions in the sense of \cite{Caffarelli3} satisfy additional properties that are not available for general viscosity solutions. 
In particular, they are Lipschitz continuous and non-degenerate, their free boundaries have locally finite $\HH^{d-1}$-measure, $\partial^\ast\Omega_u$ is locally given by a smooth function (in particular, $\partial_\nu u=-1\quad\text{on }\partial^\ast\Omega_u$), and $\HH^{d-1}(\partial\Omega_u\setminus\partial^\ast\Omega_u)=0.$ Then, by \cite[Theorem 4.5]{AltCaffarelli:OnePhaseFreeBd}, we have $\Delta u=\HH^{d-1}\res \partial^\ast\Omega_u$. This implies that every Perron solution $u$ in the sense of \cite{Caffarelli3} is a weak solution in the sense of \cite{AltCaffarelli:OnePhaseFreeBd}, and thus is a variational solution.
More recently, in \cite{feldman}, a more general family of Perron' solutions was verified to be variational.

\subsection{Main ideas of the proof}

Before giving the main ideas of the proof of the uniqueness result in \cref{thm:uniqueness}, 
we briefly discuss the main differences with the results on the uniqueness of the blow-ups proved in \cite{esv} and \cite{ERZ25}.
The argument in \cite{esv} is based on a logarithmic epiperimetric inequality for the Weiss' energy \bea W(u):=\int_{B_1}|\nabla u|^2\,dx-\int_{\partial B_1}u^2\,d\HH^{d-1}+|\{u>0\}\cap B_1|.\eea Roughly speaking, such inequality provides a competitor whose energy is quantitatively smaller than the $1$-homogeneous function with the same boundary value. 
The uniqueness of the blow-up is deduced via the decay of the Weiss' energy, which is in turn obtained by testing the minimality of the solution with the constructed competitor.
This last step is no longer available for non-minimizing solutions, and makes the approach from \cite{esv} not applicable in our case.

The approach in \cite{ERZ25} is instead inspired by the work of Allard and Almgren \cite{AA81} on minimal surfaces. It does not require any minimality property, and therefore applies to non-minimizing solutions. The argument, however, uses in an essential way the assumption that every Jacobi field is generated by rotations, and thus does not directly apply to non-integrable cones.

In the theory of minimal surfaces, the corresponding integrability assumption was removed in the seminal work of Simon \cite{Simon}. For cones with isolated singularity, Simon introduced an infinite dimensional version of the \L ojasiewicz inequality, and used it to control the evolution of the spherical slices along the radial variable, obtaining uniqueness of the tangent cone also in the non-integrable case. This strategy has subsequently been adapted and developed in several other geometric settings \cite{yangmills,schulze,coldingminicozzi2,coldingminicozzi1,chodoshschulze,edelenminter}.

Inspired by Simon's work, we aim to implement the same strategy in the one-phase setting. Precisely, in order to treat non-integrable cones, we prove a \L ojasiewicz-type inequality for the spherical Weiss' energy $$\mathcal{F}(\phi):=\int_{\partial B_1}\Big(|\nabla_\theta \phi|^2-(d-1)\phi^2\Big)\,d\HH^{d-1}+\HH^{d-1}(\{\phi>0\}\cap\partial B_1).$$ In our framework, however, the functional $\mathcal{F}$ is not analytic and, in fact, is not even differentiable. 
This creates a first fundamental difficulty: the classical \L ojasiewicz inequality cannot even be formulated in terms of the usual gradient of $\mathcal{F}$.
Consequently, we introduce an appropriate notion of gradient, which we call \emph{reduced gradient} and denote by $\nabla_{\text{red}}\mathcal{F}$ (see \cref{subsection-important}). The resulting \L ojasiewicz inequality takes the form (see \cref{thm:loj-ineq})
\be\label{eq:loj-intro}|\mathcal{F}(\phi)-\mathcal{F}(b)|^{1-\theta}\le C\|\nabla_{\text{red}}\mathcal{F}(\phi)\|,\ee for some $\theta\in(0,\frac12]$ and $C>0$. The key point in the proof of such inequality is to decompose the problem into coercive directions and a finite dimensional reduced functional, which can then be treated by the classical finite dimensional \L ojasiewicz inequality \cite{lojasiewicz}. 

When $b$ is integrable, we obtain the strongest form of the inequality \eqref{eq:loj-intro}, corresponding to $\theta=\frac12$. 
Indeed, integrability is equivalent to the vanishing of the reduced functional, in analogy with the result of Adams and Simon \cite{adamsimon}. However, in our setting, establishing this equivalence requires a finer analysis. In particular, we prove a one-to-one correspondence between $1$-homogeneous Jacobi fields and elements of the kernel of the second variation of a parametrization of $\mathcal{F}$. We refer to \cref{subsection:correspondence} for further details.

Another difficulty is that, differently from the minimal surface framework, a \L ojasiewicz inequality alone is not sufficient to obtain uniqueness of the blow-up in the one-phase problem. 
Indeed, after the change of variables $t=-\log r$, the minimal surface equation can be interpreted as a nonlinear evolution equation with a gradient-type structure. Simon's argument combines this structure with the \L ojasiewicz inequality in order to obtain uniqueness and rate of convergence.

For the one-phase problem, no analogous direct gradient-type structure is available. Indeed, the reduced gradient of $\mathcal F$ along the spherical slices cannot be directly identified with the radial derivative of the solution. This follows from the structure of the Euler-Lagrange equations of the functional \eqref{eq:functional}: the solution satisfies both an equation in the interior of its positivity set and a condition on the moving free boundary.  

To overcome this difficulty, we prove an additional estimate which controls the reduced gradient in an annulus, in terms of the radial derivative of the solution in a bigger annulus. More precisely, setting $\phi_r(\theta):=r^{-1} u(r,\theta)\in H^1(\partial B_1)$, we prove the following radial control of the reduced gradient (see \cref{prop:propfundamental})
\bea\label{eq:radial-intro}\int_{5/8}^{7/8}\frac1r\|\nabla_{{\rm red}}\mathcal{F}(\phi_r)\|^2\,dr\le C\int_{1/2}^{1}\frac1r\int_{\partial B_1}(\nabla u_r\cdot x-u_r)^2\,d\HH^{d-1}\,dr.\eea
This estimate recovers the link between the reduced gradient and the radial evolution that is needed to implement the \L ojasiewicz argument in our setting.
The key ingredients here are a smooth parametrization lemma (see \cref{lemma:smooth-parametrization-lemma}) and a Caccioppoli-type inequality for moving free boundaries (see \cref{lemma:estimateintime}). In particular, 
the compactness results of \cite{kriventsov-weiss} play a crucial role in the proof of the former result.

Combining the \L ojasiewicz inequality and the radial control of the reduced gradient, with the Weiss' monotonicity formula, we obtain a direct decay estimate for the Weiss' energy, which takes the form of the following epiperimetric-type inequality for variational solutions.

\begin{theorem}[Epiperimetric inequality]\label{thm:epiperimetric-inequality}
    There exist $\eps>0$ and $\delta>0$ depending only on $d$, $b$ and $L$ such that the following holds.
    
    Let $u\in H^1(B_1)$ be a variational solution of the one-phase problem with Lipschitz constant $L>0$, with $0\in\partial\Omega_u$, and let $b$ be a one-phase cone with isolated singularity.
    We suppose that, for some $\rho\in(0,1]$,
    \bea\label{eq:closeness-ass}\|u_{\rho}-b\|_{L^2(\partial B_1)}\le \delta\quad\text{and}\quad |W(u_{\rho})-W(b)|\le \delta.\eea 
    We also assume that $\Theta(u,0):=\lim_{r\to0^+}W(u_r)\ge W(b)$.
    Then, there exists $\gamma\in[0,1)$ depending only on $d$ and $b$, such that
    \be\label{thm:epiperimetric-inequality-eq}
    E(\rho/2)\le (1-\eps E(\rho/2)^\gamma)E(\rho),\quad\text{where}\quad E(\rho):=W(u_\rho)-W(b).
    \ee 
    Moreover, if $b$ is integrable (see \cref{def:integrability}), then we can take $\gamma=0$.
\end{theorem}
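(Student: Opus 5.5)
The plan is to combine three ingredients: the Weiss monotonicity formula, the \L ojasiewicz inequality \eqref{eq:loj-intro} for $\mathcal F$ at $b$ (\cref{thm:loj-ineq}), and the radial control of the reduced gradient (\cref{prop:propfundamental}). After rescaling we may take $\rho=1$.

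\emph{Monotonicity identity.} For a variational solution, Weiss' formula gives
\[
\frac{d}{dr}W(u_r)=\frac{2}{r}\int_{\partial B_1}(\nabla u_r\cdot x-u_r)^2\,d\HH^{d-1}.
\]
Integrating from $1/2$ to $1$ yields
\[
E(1)-E(1/2)=2\int_{1/2}^{1}\frac1r\int_{\partial B_1}(\nabla u_r\cdot x-u_r)^2\,d\HH^{d-1}\,dr.
\]
In particular $E$ is nondecreasing in $r$. Since $\Theta(u,0)\ge W(b)$, we also have $E\ge0$.

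\emph{Lower bound via the spherical energy.} Next I relate $E(r)$ to $\mathcal F(\phi_r)-\mathcal F(b)$. Writing $W(u_r)$ in polar coordinates and using the $1$-homogeneous extension, one gets
\[
W(u_r)=\frac1d\,\mathcal F(\phi_r)+R_r,
\]
where $R_r$ is a remainder controlled by $\int_{1/2}^1 \frac1s\int_{\partial B_1}(\nabla u_s\cdot x-u_s)^2$. A cleaner route is to average: $E(1/2)\le E(r)$ for every $r\in(5/8,7/8)$. Moreover, $W(u_r)$ differs from the homogeneous energy of its trace, which equals $\mathcal F(\phi_r)/d$ up to constants, by a quantity bounded by $C\int_r^{1}\frac1s\|\partial_s\|^2$. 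Hence
\[
E(1/2)\le C\big(\mathcal F(\phi_r)-\mathcal F(b)\big)_+ + C\big(E(1)-E(1/2)\big)\qquad\text{for } r\in(5/8,7/8).
\]
To apply \cref{thm:loj-ineq}, each $\phi_r$ must lie in the neighbourhood of $b$ where the inequality holds. I would obtain this from the smallness assumption in \eqref{eq:closeness-ass} together with the compactness of \cite{kriventsov-weiss}, via \cref{lemma:smooth-parametrization-lemma}, arguing by contradiction on sequences. The hypotheses $\|u_1-b\|\le\delta$ and $|E(1)|\le\delta$ make $\int_{1/2}^1$ of the radial derivative small, so every $\phi_r$ with $r\in[1/2,1]$ is close to $b$. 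This is the main technical point, but it is covered by the earlier results.

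\emph{Combining.} Applying \eqref{eq:loj-intro} with exponent $\theta$ and averaging over $r\in(5/8,7/8)$ with weight $dr/r$ gives
\[
\int_{5/8}^{7/8}\big|\mathcal F(\phi_r)-\mathcal F(b)\big|^{2(1-\theta)}\frac{dr}{r}\le C\int_{5/8}^{7/8}\|\nabla_{\rm red}\mathcal F(\phi_r)\|^2\frac{dr}{r}\le C\big(E(1)-E(1/2)\big),
\]
where the last step uses \cref{prop:propfundamental} and the monotonicity identity. By the lower bound and Jensen's inequality (or simply by choosing a good $r$), this yields
\[
E(1/2)^{2(1-\theta)}\le C\big(E(1)-E(1/2)\big)+C\big(E(1)-E(1/2)\big)^{2(1-\theta)}.
\]
Since $E(1)-E(1/2)\le\delta$ is small and $2(1-\theta)\ge1$, the second term is dominated by the first. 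Therefore
\[
E(1/2)^{2-2\theta}\le C\big(E(1)-E(1/2)\big),
\]
that is,
\[
E(1/2)\le\big(1-\eps E(1/2)^{1-2\theta}\big)E(1)
\]
after dividing by $E(1/2)$ and using $E(1)\ge E(1/2)$. The case $E(1/2)=0$ is trivial. Thus $\gamma=1-2\theta\in[0,1)$, and in the integrable case $\theta=\tfrac12$ gives $\gamma=0$.

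The step I expect to be most delicate is the lower bound relating $E(1/2)$ to $\mathcal F(\phi_r)-\mathcal F(b)$ with an error that is only linear in the radial dissipation. I would derive it by comparing $u_r$ on $B_1\setminus B_{1/2}$ with the $1$-homogeneous extension of $\phi_r$ and integrating the squared radial derivative by Cauchy--Schwarz, using the Lipschitz bound $L$ to control the measure term.
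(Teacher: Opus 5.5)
Your architecture matches the paper's proof. You square the \L ojasiewicz inequality, average over $r\in(5\rho/8,7\rho/8)$ with weight $dr/r$, apply the radial control of the reduced gradient and the integrated monotonicity formula, and read off $\gamma=1-2\theta$.

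\textbf{The gap.} The problem is the step you flag as most delicate: bounding $E(\rho/2)$ by $\mathcal F(\phi_r)-\mathcal F(b)$ with an error linear in the dissipation. Your sketched route compares $u_r$ with the $1$-homogeneous extension $z_r$ on $B_1\setminus B_{1/2}$, applies Cauchy--Schwarz to the radial derivative, and uses the Lipschitz bound for the measure term. This never uses stationarity, and without stationarity the bound you claim, that $W(u_r)-W(z_r)$ is controlled by $\int_r^1\frac1s\int_{\partial B_1}(\nabla u_s\cdot x-u_s)^2$, is false. Take $u=b+\psi$ with $\psi\ge0$ a bump compactly supported in $\Omega_b\cap B_{1/2}$. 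The radial dissipation on $(1/2,1)$ vanishes, while $W(u_r)-W(z_r)=r^{-d}\int|\nabla\psi|^2>0$: the cross term vanishes since $b$ is harmonic in $\Omega_b$, and the positivity set is unchanged.

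Moreover, a Cauchy--Schwarz comparison of this type only controls $u_r-z_r$ by the square root of the dissipation. The terms linear in $u_r-z_r$ (the cross term, the measure term) then generically give an error of order $(E(\rho)-E(\rho/2))^{1/2}$. After raising to the power $2(1-\theta)$, this error dominates $C(E(\rho)-E(\rho/2))$, and you only obtain $E(\rho/2)^2\le C(E(\rho)-E(\rho/2))$, i.e.\ $\gamma=1$. That is not the statement, it is too weak for the dyadic iteration, and it loses the integrable case.

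\textbf{The missing ingredient.} You need the equipartition of the energy \eqref{eq:equipartition-energy}, which is exactly where stationarity enters: it is Weiss' identity for arbitrary $H^1$ functions combined with the monotonicity formula. It gives $W(z_r)-W(u_r)=\frac1d\int_{\partial B_1}(\nabla u_r\cdot x-u_r)^2\,d\HH^{d-1}\ge 0$. Together with $W(z_r)=\frac1d\mathcal F(\phi_r)$ and $W(b)=\frac1d\mathcal F(b)$, this yields, for every $r\in(5\rho/8,7\rho/8)$,
\[
0\le E(\rho/2)\le E(r)\le \frac1d\left(\mathcal F(\phi_r)-\mathcal F(b)\right),
\]
with no error term at all. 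This is what the paper uses. The same identity, combined with the Lipschitz bound, also gives the hypothesis $|\mathcal F(\phi_r)-\mathcal F(b)|\le C_0$ of \cref{thm:loj-ineq}. You did not verify this hypothesis, and the constant there depends on $C_0$. With this replacement your combining step goes through verbatim, the term $C(E(\rho)-E(\rho/2))^{2(1-\theta)}$ never appears, and you get $\gamma=1-2\theta$, with $\gamma=0$ in the integrable case.
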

Epiperimetric inequalities are used as a powerful tool to establish regularity results 
in several geometric variational problems and free boundaries \cite{reifenberg,white,taylor1,taylor2,Weiss-improvement,SpolaorVelichkov2019:EpiperimetricBernoulli2D,csv18,csv20,esv,cv24,carduccitortoneuniqueness}. This method has been developed for minimizers, since it allows to deduce decay of the Weiss' energy by testing the minimality property. 
However, since the variational solutions considered here are not assumed to be minimizing, the decay \eqref{thm:epiperimetric-inequality-eq} is not obtained by competitors and energy minimality, but it is proved directly from the \L ojasiewicz inequality and the radial control of the reduced gradient.

Direct decay of the Weiss' energy has been obtained in the setting of parabolic obstacle-type problems \cite{shi18,csv-crelle,col2025,carduccicolombotorres}; however, these results rely on the gradient flow structure associated with the parabolic evolution. 
No such structure is present in our elliptic setting, and the proof requires substantially different arguments. In this sense, our result extends the epiperimetric inequality approach to non-minimizing elliptic problems, even without relying on an underlying gradient flow.

We expect that similar strategies can be used in other geometric and free boundary problems, where neither minimality nor a direct gradient flow structure is available.

\subsection{Structure of the paper} The paper is organized as follows. In \cref{sec2}, we define the notion of variational solution, recalling some regularity and compactness properties from \cite{kriventsov-weiss}. In \cref{sec2.5}, we introduce some tools from \cite{esv}, such as the functional $\mathcal{G}$, the second variation, its kernel, and the Lyapunov-Schmidt reduction. In \cref{sec:int}, we define the integrability condition, and we show that it is equivalent to the vanishing of the reduced functional, in the spirit of \cite{adamsimon}. In \cref{sec3}, we define the reduced gradient of $\mathcal{F}$, and we prove the corresponding \L ojasiewicz inequality for $\mathcal{F}$. In \cref{sec4}, we prove that the reduced gradient can be estimated by the radial derivative of the solution. Finally, in \cref{sec6}, we conclude the proof of the epiperimetric inequality in \cref{thm:epiperimetric-inequality}, and of the uniqueness result in \cref{thm:uniqueness}.
\subsection{Use of AI} No AI tools were used to produce mathematical content; the strategy and the proofs were developed and written exclusively by the authors. AI tools were used only for language editing and a final review of the paper.
\subsection{Acknowledgements}
M.C. would like to thank Xavier Fernández-Real for a useful discussion about the non-degeneracy condition in the one-phase problem.
The authors are supported by the European Research Council (ERC) via the project ERC FiRM - {\em Fine structure and regularity of stationary and moving free boundaries} (grant agreement No. 101230705).

\section{Variational solutions}\label{sec2}
\subsection{Solutions of the one-phase problem}
Let $B\subset \R^d$ be a ball and consider $u\in H^1(B)$. We define the Alt-Caffarelli functional $$J(u,B):=\int_{B}|\nabla u|^2\,dx+|\{u>0\}\cap B|.$$

We use the following definition of stationary solutions.
\begin{definition}[Stationary solutions]\label{def:stationary-sol}
    We say that $u\in H^1(B)$ is a stationary solution of the one-phase problem in $B$ if $u\in C^0(B)\cap C^2_{\text{loc}}(\Omega_u\cap B)$, $u\ge0$ in $B$, and
    \bea\label{eq:stationary-cond}\frac{d}{dt}J(u\circ \Psi_t^{-1},B)\Big|_{t=0}=\int_{B}\Big(|\nabla u|^2\dive \xi+\chi_{\{u>0\}}\dive \xi-2\nabla u\cdot D\xi\nabla u\Big)\,dx=0,\eea for every $\xi\in C^1_c(B,\R^d)$, where $\Psi_t(x):=x+t\xi(x).$
\end{definition}
We point out that the stationarity condition is given only for inner variations. The hypothesis $u\in C^2_{\text{loc}}(\Omega_u\cap B)$ is used to ensure that $\Delta u=0$ in $\Omega_u\cap B$. Indeed, integrating by parts the stationarity condition away from the free boundary, we obtain that $\Delta u \nabla u=0$ in $\Omega_u\cap B$. Since $u$ is regular there, then $\Delta u=0$ in $\Omega_u\cap B$. 

\medskip

Throughout the paper, we consider variational solutions in the following sense.
\begin{definition}[Variational solutions]\label{def:variational-sol}
    We say that $u\in H^1(B)$ is a variational solution of the one-phase problem (with Lipschitz constant $L>0$) in $B$ if $u$ is a stationary solution in the sense of \cref{def:stationary-sol}, and $u$ is $L$-Lipschitz in $B$, namely \be\label{eq:lipschitz-regularity}\|\nabla u\|_{L^\infty(B)}\le L.\ee

    We say that $u\in H^1_{\text{loc}}(\R^d)$ is a global variational solution of the one-phase problem (with Lipschitz constant $L>0$), if $u$ is a variational solution (with Lipschitz constant $L$) in every ball $B\subset \R^d$.
\end{definition}

\begin{remark}\label{rem:compare-variational-solutions}
    We emphasize the differences between our notion of variational solution in \cref{def:variational-sol} and the one considered in \cite[Subsection 3.2]{kriventsov-weiss}. The latter is defined for a pair $(u,\chi)$, where $\chi:B\to \{0,1\}$ is an additional variable which generalizes the characteristic function $\chi_{\{u>0\}}$.
This is useful in the study of limits of more regular solutions and compactness arguments, since the characteristic functions of the positivity sets could be not stable under convergence.

However, \cite[Theorem 1.2]{kriventsov-weiss} shows that if $(u,\chi)$ is a variational solution in their sense, then either $u\equiv0$ or $\chi\equiv\chi_{\{u>0\}}$ (the case $u\equiv0$ cannot be excluded, see \cite{weiss2003}). Thus, for nontrivial solutions, the variable $\chi$ is completely determined by $u$. Since in our compactness arguments we only deal with nontrivial limits (see also the discussion in \cref{rem:compactness} or directly \cref{lemma:smooth-parametrization-lemma}), our definition of variational solution is directly given in terms of $u$.
    \end{remark}

In what follows, we give the definition of classical solutions of the one-phase problem.
\begin{definition}[Classical solutions]\label{def:classical-solution}
    Let $A\subset B$ be a domain. We say that $u\in H^1(B)$ is a classical solution of the one-phase problem in $A$ if $u\ge0$, $\partial\Omega_u\cap A$ is $C^{2,\alpha}$ regular, $u\in C^{2,\alpha}(\overline\Omega_u\cap A)$ and \bea\label{eq:sol-classical}
        \Delta u=0\quad \text{in }\Omega_u\cap A,\qquad
        |\nabla u|=1\quad \text{on }\partial \Omega_u\cap A,\eea in pointwise sense.

Let $A\subset \R^d$ be a domain. We say that $u\in H^1_{\text{loc}}(\R^d)$ is a classical solution of the one-phase problem in $A$, if $u$ is a classical solution in $A\cap B$, for every ball $B\subset \R^d$.
    
\end{definition}

Finally, we also use the following definition of viscosity solutions.

\begin{definition}[Viscosity solutions]\label{def:viscosity-sol} We say that $u\in C^0(B)$ is a viscosity solution of the one-phase problem if $u\ge0$ in $B$, $\Delta u=0$ in $\Omega_u\cap B$ and $u$ satisfies the free boundary condition in the following sense. For every point $x_0\in \partial\Omega_u\cap B$ and every test function $\varphi\in C^\infty(B)$ such that $\varphi(x_0)=0$ and $\varphi\le u$ (resp. $\varphi_+\ge u$) in $B_r(x_0)\subset B$ for some $r>0$, we have $|\nabla\varphi(x_0)|\le 1$ (resp. $|\nabla\varphi(x_0)|\ge 1$).
\end{definition}

\subsection{Monotonicity formula for the Weiss' energy}\label{subsection-weiss}
For any function $u\in H^1(B_1)$, we define the Weiss' energy \bea W(u):=\int_{B_1}|\nabla u|^2\,dx-\int_{\partial B_1}u^2\,d\HH^{d-1}+|\{u>0\}\cap B_1|.\eea
We consider the rescalings $u_{r,x_0}(x):=r^{-1}u(x_0+rx)$. When $x_0=0$, we will drop the dependence on $x_0$, writing $u_r$ for $u_{r,0}$.

\medskip

In the following proposition, we recall the  Weiss' monotonicity formula for stationary solutions from \cite[Theorem 3.1]{weiss98}. Since variational solutions are stationary, the monotonicity formula holds for them as well.
\begin{proposition}\label{prop:monotonicity-formula}
    Let $u\in H^1(B_1)$ be a stationary solution of the one-phase problem, and let $x_0\in B_1$. Then, for almost every $r\in(0,\text{dist}(x_0,\partial B_1))$, we have \be\label{eq:monotonicity-formula}\frac{d}{dr} W(u_{r,x_0})=\frac2r\int_{\partial B_1}(\nabla u_{r,x_0}\cdot x-u_{r,x_0})^2\,d\HH^{d-1}.
\ee In particular, $r\mapsto W(u_{r,x_0})$ is non-decreasing, and it is constant if and only if $u$ is $1$-homogeneous with respect to $x_0$.
\end{proposition}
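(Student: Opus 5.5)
We prove the Weiss monotonicity formula \eqref{eq:monotonicity-formula} by differentiating $W(u_{r,x_0})$ in $r$ and then using the stationarity condition \eqref{eq:stationary-cond} with a radial vector field. Without loss of generality $x_0=0$.

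\emph{Step 1: scaling.} By the change of variables $x=ry$ we write
\[
W(u_r)=r^{-d}\int_{B_r}\big(|\nabla u|^2+\chi_{\{u>0\}}\big)\,dx-r^{-d-1}\int_{\partial B_r}u^2\,d\HH^{d-1}.
\]
Since $u$ is Lipschitz, both terms are absolutely continuous in $r$, and for a.e.\ $r$ their derivatives are given by the coarea formula:
\[
\frac{d}{dr}W(u_r)=-\frac dr\,D(r)+r^{-d}\int_{\partial B_r}\big(|\nabla u|^2+\chi_{\{u>0\}}\big)-\frac{d}{dr}\Big(r^{-d-1}\int_{\partial B_r}u^2\Big),
\]
where $D(r):=r^{-d}\int_{B_r}(|\nabla u|^2+\chi_{\{u>0\}})$. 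For the boundary term we write $\int_{\partial B_r}u^2=r^{d-1}\int_{\partial B_1}u(ry)^2$. Differentiating gives
\[
\frac{d}{dr}\Big(r^{-d-1}\int_{\partial B_r}u^2\Big)=-2r^{-d-2}\int_{\partial B_r}u^2+2r^{-d-1}\int_{\partial B_r}u\,\partial_\nu u .
\]

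\emph{Step 2: stationarity with a radial field.} We apply \eqref{eq:stationary-cond} with $\xi(x)=\eta_k(|x|)x$, where $\eta_k\in C^1_c$ increases to $\chi_{[0,r)}$. For this field $\dive\xi=d\eta_k+\eta_k'|x|$ and $D\xi=\eta_k I+\eta_k'\frac{x\otimes x}{|x|}$. Letting $k\to\infty$, at every Lebesgue point $r$ we get the Pohozaev-type identity
\[
(d-2)\int_{B_r}|\nabla u|^2+d\,|\{u>0\}\cap B_r|=r\int_{\partial B_r}\big(|\nabla u|^2+\chi_{\{u>0\}}-2(\partial_\nu u)^2\big).
\]
We also need the identity $\int_{B_r}|\nabla u|^2=\int_{\partial B_r}u\,\partial_\nu u$. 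This follows from $\Delta u=0$ in $\Omega_u$: test with $u\,\chi_{\{u>\varepsilon\}}$ (or use $\dive(u\nabla u)=|\nabla u|^2$ away from the free boundary, with $u=0$ on it), integrate on $B_r\cap\{u>\varepsilon\}$, and let $\varepsilon\to0$. The Lipschitz bound and Sard's theorem control the boundary term on $\{u=\varepsilon\}$, which is $\varepsilon\int_{\{u=\varepsilon\}\cap B_r}|\nabla u|\to0$ by the coarea formula.

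\emph{Step 3: algebra.} We use the Pohozaev identity to eliminate $\int_{\partial B_r}(|\nabla u|^2+\chi_{\{u>0\}})$, and we replace $\int_{B_r}|\nabla u|^2$ by $\int_{\partial B_r}u\,\partial_\nu u$. Collecting terms, the derivative becomes
\[
\frac{d}{dr}W(u_r)=\frac{2}{r^{d}}\int_{\partial B_r}\Big(\partial_\nu u-\frac ur\Big)^2.
\]
Rescaling to $\partial B_1$ gives exactly \eqref{eq:monotonicity-formula}, since $\nabla u_r\cdot x-u_r$ evaluated at $y\in\partial B_1$ equals $\partial_\nu u(ry)-u(ry)/r$.

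\emph{Step 4: consequences.} Monotonicity follows because the right-hand side of \eqref{eq:monotonicity-formula} is nonnegative. If $W(u_r)$ is constant, then $x\cdot\nabla u=u$ a.e.; integrating along rays, using that $u$ is Lipschitz, shows that $u$ is $1$-homogeneous. The converse is immediate, since then $u_r=u$ for every $r$.

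The main obstacle is the rigorous justification in Step 2: passing to the limit with the cutoffs $\eta_k$ and carrying out the integration by parts $\int|\nabla u|^2=\int_{\partial}u\,\partial_\nu u$ near a free boundary of unknown regularity. Both are handled by the Lipschitz bound, which gives dominated convergence, and by restricting to a.e.\ $r$.
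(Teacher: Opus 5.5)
Your argument is correct and is essentially the paper's (i.e.\ Weiss's) proof. The paper only splits your computation into two pieces: the identity $\frac{d}{dr}W(u_r)=\frac{d}{r}\big(W(z_r)-W(u_r)\big)+\frac1r\int_{\partial B_1}(\nabla u_r\cdot x-u_r)^2\,d\HH^{d-1}$, valid for every $H^1$ function, and the equipartition identity \eqref{eq:equipartition-energy}, which is exactly your Pohozaev identity combined with $\int_{B_r}|\nabla u|^2=\int_{\partial B_r}u\,\partial_\nu u$. This splitting is reused later in \eqref{eq:eqeq2}.

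One minor correction: stationary solutions in the sense of \cref{def:stationary-sol} are not assumed Lipschitz. None of your steps actually needs it, since absolute continuity in $r$, the limit $\eta_k\to\chi_{[0,r)}$, the $\varepsilon\to0$ argument and the ray argument in Step~4 only use $u\in H^1(B_1)\cap C^0(B_1)$ with $\Delta u=0$ in $\Omega_u$.
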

Let us recall the proof of the monotonicity formula in \cite{weiss98}, which is based on the following two facts (see also \cite[Section 9]{Velichkov:RegularityOnePhaseFreeBd}). Assuming without loss of generality that $x_0=0$, we first observe that for any function $u\in H^1(B_1)$, we have the Weiss’ formula
\bea\label{eq:weiss-formula}\frac{d}{dr} W(u_r)=\frac{d}{r}(W(z_r)-W(u_r))+\frac{1}{r}\int_{\partial B_1}(\nabla u_r\cdot x-u_r)^2\,d\HH^{d-1},\eea where $z_r$ is the $1$-homogeneous extension of the trace $u_r|_{\partial B_1}$. 
Second, if $u\in H^1(B_1)$ is a stationary solution, we have the equipartition of the energy
\be\label{eq:equipartition-energy}W(z_r)-W(u_r)=\frac1d\int_{\partial B_1}(\nabla u_r\cdot x-u_r)^2\,d\HH^{d-1}.
\ee
Combining the previous two identities, we conclude \eqref{eq:monotonicity-formula}.

\smallskip

As a consequence of \eqref{eq:monotonicity-formula}, we also get the following useful estimate \be\label{eq:inequality-with-log}\|u_{r_2}-u_{r_1}\|_{L^2(\partial B_1)}\le C \log\left(\frac{r_2}{r_1}\right)^{1/2}(W(u_{r_2})-W(u_{r_1}))^{1/2}\quad\text{for every } 0<r_1\le r_2\le 1.\ee
This can be proved by integrating \eqref{eq:monotonicity-formula} and by applying the H\"older inequality.

\subsection{Properties of variational solutions}
In the following proposition, we recall some of the main results of \cite{kriventsov-weiss} about variational solutions. 
\begin{proposition}\label{prop:kriventsov-weiss}
    Let $u\in H^1(B_1)$ be a variational solution of the one-phase problem, then the following holds.
    \begin{itemize}
        \item[(i)] For every $x_0\in\partial\Omega_u\cap B_1$, the function $\Theta(u,x_0):=\lim_{r\to0^+}W(u_{r,x_0})$ is well-defined and satisfies $\Theta(u,x_0)\in [\omega_d/2,\omega_d]$. 
        Moreover, the map $\partial\Omega_u\cap B_1\ni x_0\mapsto \Theta(u,x_0)$ is upper semicontinuous.
        \item[(ii)] The set $\partial\Omega_u$ is countably $\HH^{d-1}$-rectifiable, with locally finite $\HH^{d-1}$-measure. Moreover, denoting by $\partial^\ast\Omega_u\subset \partial \Omega_u$ the reduced free boundary, and considering \be\label{eq:sigmaH}\Sigma^{H}_u:=\{x_0\in\partial\Omega_u\cap B_1:\ \Theta(u,x_0)=\omega_d\},\ee we have, in the sense of distributions in $B_1$, $$\Delta u=\HH^{d-1}\res \partial^\ast\Omega_u+2c_dH(u,x)^{1/2}\HH^{d-1}\res \Sigma^H_u,$$ where $H(u,x):=\lim_{r\to0^+}r^{-2}\fint_{\partial B_r(x)}u^2\,d\HH^{d-1}$ and $c_d>0$ is a normalization constant.
        \item[(iii)] If $\Sigma^{H}_u=\emptyset$, then $u$ is a viscosity solution in $B_1$, in the sense of \cref{def:viscosity-sol}.
    \end{itemize}
\end{proposition}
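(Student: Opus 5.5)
The plan is to follow \cite{kriventsov-weiss} and prove the three items in order, since each builds on the previous one. The only tool needed from this paper is the monotonicity formula in \cref{prop:monotonicity-formula}, used together with compactness of blow-up sequences.

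\textbf{Item (i).} Fix $x_0\in\partial\Omega_u\cap B_1$. By \cref{prop:monotonicity-formula}, $r\mapsto W(u_{r,x_0})$ is non-decreasing. It is also bounded below by $-\int_{\partial B_1}u_{r,x_0}^2 \ge -L^2|\partial B_1|$, using the Lipschitz bound and $u(x_0)=0$. Hence the limit $\Theta(u,x_0)$ exists.

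For the range, take a blow-up sequence $u_{r_k,x_0}$. It is bounded in $W^{1,\infty}$, so it converges uniformly, and (after \cite{kriventsov-weiss}) strongly in $H^1_{\rm loc}$, to a pair $(b,\chi)$. This pair is a $1$-homogeneous global variational solution in the pair sense of \cref{rem:compare-variational-solutions}. Homogeneity gives $\Theta(u,x_0)=W(b,\chi)=\int_{B_1}\chi \le\omega_d$.

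The lower bound $\omega_d/2$ is the genuinely nontrivial part. I would obtain it from the Kriventsov--Weiss dichotomy: either $b\equiv0$, in which case $\chi$ is constant, and $x_0$ being a free boundary point forces $\chi\equiv1$, giving density $\omega_d$; or $\chi=\chi_{\{b>0\}}$. In the second case, choose a reduced-boundary point of $\{b>0\}$ away from the origin and blow up there. Upper semicontinuity along the cone and monotonicity give $W(b)\ge$ the density of a flat solution, which is $\omega_d/2$.

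Upper semicontinuity of $x\mapsto\Theta(u,x)$ follows by the standard argument. For fixed $r$, the map $x\mapsto W(u_{r,x})$ is continuous, because $u\in W^{1,\infty}$; the only delicate term is $|\{u>0\}\cap B_r(x)|$, which is continuous in $x$ for a fixed set. Then $\Theta(u,\cdot)=\inf_r W(u_{r,\cdot})$ is an infimum of continuous functions.

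\textbf{Item (ii).} Here I would first show that $\Delta u$ is a nonnegative Radon measure supported on $\partial\Omega_u$, which holds since $u\ge0$ is harmonic where positive, and that it has locally finite mass by the Lipschitz bound. Next I would obtain rectifiability via a Preiss/Marstrand-type argument. This uses the $(d-1)$-density of $\Delta u$, computed from blow-ups at $\HH^{d-1}$-a.e. point, where the blow-up is either the flat solution $(x\cdot\nu)_+$ or a ``double plane'' $c|x\cdot\nu|$ with $\chi\equiv1$. Identifying the density of $\Delta u$ on the reduced boundary as $1$ and on $\Sigma^H_u$ as $2c_dH^{1/2}$ then comes from computing $\Delta$ of these two model blow-ups.

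\textbf{Item (iii).} If $\Sigma^H_u=\emptyset$, then every blow-up is nontrivial with $\chi=\chi_{\{b>0\}}$. To check the viscosity condition at $x_0$, suppose a test function $\varphi$ touches $u$ from below or above. Blowing up $u$ and $\varphi$ together gives the comparison $(\nabla\varphi(x_0)\cdot x)_+\le b$, or the reverse. Stationarity of the homogeneous blow-up $b$ along the free boundary, i.e.\ $|\nabla b|=1$ at its flat points in a weak sense, then forces $|\nabla\varphi(x_0)|\le1$ (resp.\ $\ge1$).

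\textbf{Main obstacle.} I expect the hardest step to be the classification and lower density bound in (i), together with the structure of $\Delta u$ on $\Sigma^H_u$ in (ii). This is where the lack of non-degeneracy enters, and where the degenerate limits $(0,\chi)$ must be handled through the pair formulation. For these steps I would rely directly on \cite[Theorem 1.2 and Section 8]{kriventsov-weiss} rather than reprove them.
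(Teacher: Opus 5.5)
Your overall route agrees with the paper's, which proves this proposition purely by citation: \cite[Proposition 3.5, Lemma 8.1]{kriventsov-weiss} for (i), \cite[Theorem 8.7]{kriventsov-weiss} for (ii) and \cite[Lemma 8.3]{kriventsov-weiss} for (iii). Three of your steps in (i) are fine: existence of the limit, the bound $\Theta\le\omega_d$ via homogeneous blow-ups, and upper semicontinuity as an infimum over $r$ of functions continuous in $x$. However, two of the arguments you spell out fail as written.

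\textbf{The lower bound in (i).} In the case $b\equiv0$, nothing forces $\chi\equiv1$. Knowing $x_0\in\partial\Omega_u$ only says that $\Omega_u$ accumulates at $x_0$. Without non-degeneracy or density bounds you cannot exclude $|\Omega_u\cap B_{r_k}(x_0)|/r_k^d\to0$, which gives $\chi\equiv0$ and $\Theta(u,x_0)=0$. Excluding this is precisely the lower density estimate for $\Omega_u$, and \cref{rem:compactness} obtains that estimate \emph{from} $\Theta\ge\omega_d/2$, so your step is circular. A non-circular argument goes as follows. Take $y_k\in\Omega_u$ with $y_k\to x_0$, and let $z_k\in\partial\Omega_u$ be a point nearest to $y_k$. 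Then $\Omega_u$ contains a ball touching $\partial\Omega_u$ at $z_k$, so every blow-up at $z_k$ has $\chi=1$ on a half-space. The Dirichlet part $\int_{B_1}|\nabla u_{r,z_k}|^2-\int_{\partial B_1}u_{r,z_k}^2$ has $\liminf$ at least that of the $1$-homogeneous limit, which vanishes. Hence $\Theta(u,z_k)\ge\omega_d/2$, and upper semicontinuity gives $\Theta(u,x_0)\ge\omega_d/2$ with no case distinction.

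\textbf{Rectifiability in (ii).} A Preiss/Marstrand argument applied to $\Delta u$ cannot yield the claims about $\partial\Omega_u$. By the very formula to be proved, $\Delta u$ has zero $(d-1)$-density at the degenerate points of $\Sigma^H_u$ where $H(u,x)=0$; there every blow-up is $0$ with $\chi\equiv1$. So rectifiability of the measure $\Delta u$ says nothing about these points. Likewise, deducing local finiteness of $\HH^{d-1}(\partial\Omega_u)$ from the finite mass of $\Delta u$ would need $\Delta u(B_r(x))\ge cr^{d-1}$ at every free boundary point, which is exactly the non-degeneracy that variational solutions lack. There are two further problems. The $\HH^{d-1}$-a.e.\ classification of blow-ups as $(x\cdot\nu)_+$ or $c|x\cdot\nu|$ is a consequence of rectifiability, not an input. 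And Preiss's theorem requires the density to exist, which subsequential blow-ups do not give. So (ii) genuinely rests on \cite[Theorem 8.7]{kriventsov-weiss}, as in the paper.

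\textbf{A missing step in (iii).} Your sketch is sound but leaves the classification implicit. From $b\ge(p\cdot x)_+$ (resp.\ $b\le(p\cdot x)_+$), strict monotonicity of the first Dirichlet eigenvalue of spherical domains forces $\{b>0\}$ to be exactly the half-space. In the first case the alternative $\{x\cdot e\neq0\}$ is excluded by $\Theta<\omega_d$. Thus $b=\alpha(x\cdot e)_+$, stationarity forces $\alpha=1$, and this gives $|p|\le1$ (resp.\ $|p|\ge1$).
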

\begin{proof}
    First, $\Theta(u,x_0)$ is well-defined by \cref{prop:monotonicity-formula}. Then, (i) follows from \cite[Proposition 3.5]{kriventsov-weiss} and \cite[Lemma 8.1]{kriventsov-weiss}. The point (ii) is exactly \cite[Theorem 8.7]{kriventsov-weiss}, while (iii) follows by \cite[Lemma 8.3]{kriventsov-weiss}. 
    \end{proof}
For the sake of completeness, we recall that, for $\HH^{d-1}$-a.e.~$x_0\in\Sigma^{H}_u$, the blow-up sequence $u_{r,x_0}$ converges, up to a rotation, to $c_dH(u,x_0)^{1/2}|x_d|$. Notice that this includes the possibility that $H(u,x_0)=0$, in which case the blow-up is identically zero and $x_0$ is a degenerate point.
\subsection{Compactness for variational solutions}
In the following proposition, we give a compactness result for variational solutions established in \cite{kriventsov-weiss}.
\begin{proposition}\label{lemma:compactness-kw}
    Let $\{u^j\}\subset H^1(B_1)$ be a sequence of variational solutions of the one-phase problem with the same Lipschitz constant $L>0$, such that $0\in\partial\Omega_{u_j}$ for every $j\in\N$. Then, up to extracting a subsequence, 
    $$u^j\to u^\infty\quad\text{strongly in }H^1(B_{1})\cap C^{0,\alpha}(\overline B_1),$$ for some variational solution $u^\infty\in H^1(B_{1})$ with Lipschitz constant $L$. 
    
    Moreover, if $u^\infty\not\equiv0$, we have $\Omega_{u^j}\to\Omega_{u^\infty}$, $\partial \Omega_{u^j}\to \partial \Omega_{u^\infty}$ locally in the Hausdorff distance in $B_1$, and $\chi_{\Omega_{u^j}}\to \chi_{\Omega_{u^\infty}}$ in $L^1_{\text{loc}}(B_1).$
\end{proposition}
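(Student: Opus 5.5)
The compactness proposition (\cref{lemma:compactness-kw}) is essentially a restatement of results in \cite{kriventsov-weiss}, adapted to our single-variable \cref{def:variational-sol}. I would prove it by reducing to their framework and then removing the auxiliary variable $\chi$.

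\emph{Step 1: uniform bounds and the limit $u^\infty$.} Since $0\in\partial\Omega_{u^j}$, we have $u^j(0)=0$, so $|u^j|\le L$ on $\overline B_1$ and $\|\nabla u^j\|_{L^\infty}\le L$. By Arzel\`a--Ascoli and weak-$*$ compactness, up to a subsequence, $u^j\to u^\infty$ in $C^{0,\alpha}(\overline B_1)$ for every $\alpha<1$, and $\nabla u^j\rightharpoonup\nabla u^\infty$ weakly-$*$ in $L^\infty$. The limit is $L$-Lipschitz and non-negative. Moreover, $\chi_{\Omega_{u^j}}\rightharpoonup\chi$ weakly-$*$ in $L^\infty$ for some $\chi$ with values in $[0,1]$.

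\emph{Step 2: strong $H^1$ convergence.} Harmonicity of $u^j$ in $\Omega_{u^j}$ together with $u^j\ge0$ gives that $\Delta u^j$ is a non-negative measure, with mass in $B_{r}$ bounded uniformly in terms of $L$ (test with a cutoff). Hence $\int \eta|\nabla u^j|^2=-\int u^j\nabla u^j\cdot\nabla\eta+\int\eta u^j\,d\Delta u^j$. Using uniform convergence of $u^j$, weak convergence of the measures, and the fact that $u^\infty\Delta u^\infty=0$ (since $u^\infty$ is harmonic where positive), we get convergence of the norms. This gives strong $H^1_{\rm loc}$ convergence. Up to $\partial B_1$ I would either cite \cite{kriventsov-weiss} or interpret the statement locally, as they do.

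\emph{Step 3: stationarity of the limit.} Passing to the limit in the stationarity identity of \cref{def:stationary-sol}, the quadratic terms converge by strong $H^1$ convergence. The term $\chi_{\{u^j>0\}}\dive\xi$ converges to $\chi\dive\xi$. Thus $(u^\infty,\chi)$ is a variational solution in the sense of \cite[Subsection 3.2]{kriventsov-weiss}; one must also check that $\chi$ takes values in $\{0,1\}$ and that $\chi=1$ on $\{u^\infty>0\}$, which is where their compactness theory is needed. \cite[Theorem 1.2]{kriventsov-weiss} then gives $u^\infty\equiv0$ or $\chi=\chi_{\{u^\infty>0\}}$. In either case $u^\infty$ is stationary in our sense: if $u^\infty\equiv 0$ this is trivial, and it is $C^2$ in its positivity set by harmonicity there, which itself follows from locally uniform convergence of the harmonic functions $u^j$ on compact subsets of $\{u^\infty>0\}$.

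\emph{Step 4: convergence of the sets when $u^\infty\not\equiv0$.} Here $\chi=\chi_{\Omega_{u^\infty}}$, and weak-$*$ convergence of characteristic functions to a characteristic function upgrades to $L^1_{\rm loc}$ convergence, since $\int(\chi_j-\chi)^2=\int\chi_j-2\chi_j\chi+\chi\to0$. For Hausdorff convergence, $\Omega_{u^\infty}$-compacts lie in $\Omega_{u^j}$ by uniform convergence. The converse inclusion, that points of $\partial\Omega_{u^j}$ accumulate only on $\partial\Omega_{u^\infty}$ and not in the interior of the zero set, requires a non-degeneracy substitute. I expect this to be the main obstacle, since variational solutions lack density estimates. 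I would use upper semicontinuity of the density together with the lower bound $\Theta\ge\omega_d/2$ from \cref{prop:kriventsov-weiss}(i): a free boundary point of $u^j$ converging to an interior point of $\{u^\infty=0\}$ would force density $\ge\omega_d/2$ there. This contradicts $W((u^\infty)_{r,x})=0$ there, given the $L^1$ convergence of the characteristic functions and the strong $H^1$ convergence that make $W$ continuous. The case of limit points on $\Sigma^H$-type sets is handled precisely by \cite[Theorem 1.2]{kriventsov-weiss}, which I would cite for this step.
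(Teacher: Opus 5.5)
Your proposal is correct and follows essentially the same route as the paper, which likewise reduces everything to the proof of \cite[Theorem 9.3]{kriventsov-weiss} and gets the Hausdorff convergence of the free boundaries from Weiss' monotonicity together with $\Theta\ge\omega_d/2$; your limit $\omega_d/2\le W(u^j_{r,x_j})\to W(u^\infty_{r,x})=0$ is the same mechanism as the lower density bound $|\Omega_u\cap B_r(x_0)|\ge cr^d$ in \cref{rem:compactness}. You need not hedge about strong convergence up to $\partial B_1$: since $|\nabla u^j|\le L$, you have $\int_{B_1\setminus B_{1-\eps}}|\nabla u^j-\nabla u^\infty|^2\le 4L^2|B_1\setminus B_{1-\eps}|$ uniformly in $j$, so strong $H^1_{\rm loc}$ convergence upgrades directly to strong $H^1(B_1)$ convergence.
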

\begin{proof}
    The result follows by the argument in the proof of \cite[Theorem 9.3]{kriventsov-weiss} (see also \cite[Remark 9.4]{kriventsov-weiss}).
\end{proof}
\begin{remark}\label{rem:compactness} 
We remark the differences between \cref{lemma:compactness-kw} (or \cite[Theorem 9.3]{kriventsov-weiss}) and other compactness results for the one-phase problem, such as \cite[Proposition 4.1]{jerisonkamburov}, \cite[Proposition A.4]{KamburovWang} and \cite[Lemma 4.5]{ChanFernandezFigalliSerra}. 

First, variational solutions do not satisfy in general either the non-degeneracy property, or a lower-density assumption on the zero phase. 
Therefore, the convergence of the free boundaries cannot be obtained through the standard arguments. Nevertheless, this convergence follows from the following lower-density estimate for the positivity set (see \cite[Lemma 9.2]{kriventsov-weiss}) $$|\{u>0\}\cap B_r(x_0)|\ge cr^d\quad\text{for every }x_0\in\partial \Omega_u\cap B_1,$$ 
for $r$ small enough and $c=c(d,L)>0$. This last estimate is a straightforward consequence of the Weiss' monotonicity formula, once the lower bound $\Theta(u,x_0)\ge \omega_d/2$ is known.

Second, in a compactness argument based on \cref{lemma:compactness-kw}, one needs to rule out the degenerate case $u^\infty\equiv0$. 
In the compactness argument used to prove \cref{lemma:smooth-parametrization-lemma} below (see also \cref{lemma:u-zeta-sigma}),
this is guaranteed by the convergence of the sequence to the nontrivial cone $b$ in $L^2(\partial B_1)$. Indeed, the limit satisfies $u^\infty\equiv b$ on $\partial B_1$, and thus $u^\infty\not\equiv0$.
 \end{remark}
\subsection{Existence of blow-ups}
In the following lemma, we prove the existence of the blow-up limits and the decomposition of the free boundary into regular and singular points, for variational solutions of the one-phase problem.
\begin{lemma}\label{lemma:existence-blow-up}
    Let $u\in H^1(B_1)$ be a variational solution of the one-phase problem, and let $x_0\in\partial\Omega_u\cap B_1$. Then, along a subsequence $r_j\to0^+$, the rescaling $u_{r_j,x_0}$ converges locally uniformly to a function $u_{0,x_0}\in H^1_{\text{loc}}(\R^d)$, which is a $1$-homogeneous global variational solution. Moreover, we can split the free boundary $\partial\Omega_u$ as follows $$\partial\Omega_u\cap B_1=\text{Reg}(u)\cup \text{Sing}(u),$$ where $\text{Reg}(u)$ is an open set in $\partial\Omega_u$, and it is locally given by the graph of a smooth function, while $\text{Sing}(u)$ is a closed set with locally finite $\HH^{d-1}$-measure.
\end{lemma}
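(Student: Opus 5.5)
The proof splits into three steps: existence of blow-ups by compactness, identification of regular points through the density $\Theta$, and a Hausdorff-measure bound for the singular set.

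\textbf{Step 1: existence and homogeneity of blow-ups.} Fix $x_0\in\partial\Omega_u\cap B_1$ and set $v_r:=u_{r,x_0}$.
\begin{itemize}
\item For every $R>0$ and $r<\text{dist}(x_0,\partial B_1)/R$, the functions $v_r$ are variational solutions in $B_R$ with the same Lipschitz constant $L$, and $0\in\partial\Omega_{v_r}$.
\item Applying \cref{lemma:compactness-kw} (after rescaling $B_R$ to $B_1$) with a diagonal argument over $R\in\N$ gives $r_j\to0^+$ and $u_{0,x_0}\in H^1_{\text{loc}}(\R^d)$ with $v_{r_j}\to u_{0,x_0}$ strongly in $H^1(B_R)$ and in $C^{0,\alpha}(\overline B_R)$ for every $R$. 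In particular the convergence is locally uniform, and the limit is a global variational solution with Lipschitz constant $L$.
\item For homogeneity, strong $H^1$ convergence and the $L^1_{\text{loc}}$ convergence of the positivity sets let us pass to the limit in $W$. The case $u_{0,x_0}\equiv0$ is handled separately: it is trivially homogeneous, and the measure term is then treated via the $\chi$-formulation of \cite{kriventsov-weiss}.
\item One gets $W((u_{0,x_0})_s)=\lim_j W(u_{sr_j,x_0})=\Theta(u,x_0)$ for every $s>0$. So $s\mapsto W((u_{0,x_0})_s)$ is constant, and \cref{prop:monotonicity-formula} gives $1$-homogeneity.
\end{itemize}

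\textbf{Step 2: regular points.} Define $\text{Reg}(u)$ as the set of points where some blow-up equals $(x\cdot\nu)_+$ for some unit vector $\nu$, and $\text{Sing}(u)$ as its complement in $\partial\Omega_u\cap B_1$. Let $x_0\in\text{Reg}(u)$.
\begin{itemize}
\item Along the subsequence, $v_{r_j}$ is uniformly close to a half-plane solution.
\item By \cref{lemma:compactness-kw}, since the limit is nontrivial, $\partial\Omega_{v_{r_j}}$ converges in Hausdorff distance to $\{x\cdot\nu=0\}$. So $v_{r_j}$ is $\eps$-flat in $B_1$ for $j$ large.
\item Points $y$ of $\Sigma^H$ satisfy $\Theta(u,y)=\omega_d$, whereas $\Theta(u,x_0)=\omega_d/2$. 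Upper semicontinuity of $\Theta$ (\cref{prop:kriventsov-weiss}(i)) is therefore compatible with $\Sigma^H_u$ accumulating at $x_0$; it does not rule it out.
\item To exclude $\Sigma^H_u$ near $x_0$, I would argue by compactness instead. Blow-ups at points of $\Sigma^H$ have density $\omega_d$. Flatness, combined with the monotonicity formula at nearby points, forces $W(u_{s,y})\le\omega_d/2+o(1)$ at small fixed scales. Hence $\Theta(u,y)<\omega_d$ nearby, so $\Sigma^H_u\cap B_\rho(x_0)=\emptyset$.
\item By \cref{prop:kriventsov-weiss}(iii), $u$ is then a viscosity solution in $B_\rho(x_0)$. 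De Silva's improvement of flatness \cite{DeSilva:FreeBdRegularityOnePhase} shows $\partial\Omega_u$ is a $C^{1,\alpha}$ graph near $x_0$, and higher regularity gives smoothness.
\item Every nearby free boundary point is also flat at small scales, so $\text{Reg}(u)$ is open in $\partial\Omega_u$ and $\text{Sing}(u)$ is relatively closed.
\end{itemize}

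\textbf{Step 3: measure of the singular set.} Local finiteness of $\HH^{d-1}(\text{Sing}(u))$ follows directly from \cref{prop:kriventsov-weiss}(ii), since $\text{Sing}(u)\subset\partial\Omega_u$.

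\textbf{Main obstacle.} The hard part is the regular-point step: upgrading flatness at $x_0$ to viscosity-solution status in a neighbourhood. Variational solutions lack non-degeneracy, so density considerations alone do not exclude $\Sigma^H_u$. I would first try an $\eps$-regularity statement: any variational solution $v$ with $\|v-(x_d)_+\|_{L^\infty(B_1)}$ small and $W(v)\le\omega_d/2+\eps$ has $\Theta(v,y)<\omega_d$ for all $y\in\partial\Omega_v\cap B_{1/2}$. This would follow from monotonicity, since $\Theta(v,y)\le W(v_{1/2,y})$, together with continuity of $W$ under the strong convergence of \cref{lemma:compactness-kw}.
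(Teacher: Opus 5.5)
Your proof is correct and follows the paper's route: compactness plus monotonicity give homogeneous blow-ups, De Silva's improvement of flatness applies once $\Sigma^H_u$ is shown to miss a neighbourhood of a regular point, and \cref{prop:kriventsov-weiss}(ii) gives the measure bound. The one misstep is your remark that upper semicontinuity of $\Theta$ does not rule out $\Sigma^H_u$ accumulating at $x_0$. It rules it out directly: $\limsup_{y\to x_0}\Theta(u,y)\le\Theta(u,x_0)=\omega_d/2<\omega_d$, so $\Sigma^H_u=\{\Theta\ge\omega_d\}$ is relatively closed, and this is exactly how the paper argues. Your replacement argument (continuity of $y\mapsto W(u_{s,y})$ at a fixed scale combined with monotonicity) is valid, but it is simply the proof of that upper semicontinuity, so your ``main obstacle'' is not actually an obstacle.
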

\begin{proof}
    As a consequence of \cref{lemma:compactness-kw}, we have, along a subsequence $r_j\to0^+$, the locally uniform convergence of $u_{r_j,x_0}$ to some global variational solution $u_{0,x_0}$. Moreover, by the monotonicity formula in \cref{prop:monotonicity-formula}, we also have that $u_{0,x_0}$ is $1$-homogeneous. 
    
    Next we define $\text{Reg}(u)$ as the set of free boundary points $x_0\in\partial\Omega_u\cap B_1$ such that one blow-up is of the form $(x_d)_+$, up to a rotation, and we set $\text{Sing}(u):=(\partial\Omega_u\cap B_1)\setminus \text{Reg}(u)$. 
    By \cref{prop:kriventsov-weiss} (i), $\Theta(u,x_0)\in[\omega_d/2,\omega_d]$ and $x_0\mapsto\Theta(u,x_0)$ is upper semicontinuous. Then, $\partial\Omega_u\setminus\Sigma^{H}_u$ is an open set on the free boundary $\partial\Omega_u$. 
    
    Now let $x_0\in\text{Reg}(u)$, and choose $r_j\to0^+$ such that $u_{r_j,x_0}\to (x_d)_+$. In particular, $\Theta(u,x_0)=\omega_d/2$ and thus $x_0\not\in \Sigma^H_u$. Then, there exists $\rho>0$ such that $\Sigma_u^H\cap B_\rho(x_0)=\emptyset$. Therefore, for every $j$ large enough, $\Sigma_{u_{r_j,x_0}}^H\cap B_1=\emptyset$ and thus, by \cref{prop:kriventsov-weiss} (iii), $u_{r_j,x_0}$ is a viscosity solution in $B_1$. 
    
    Since $u_{r_j,x_0}\to (x_d)_+$ locally uniformly and the corresponding positivity sets and free boundaries converge locally in the Hausdorff distance (see \cref{lemma:compactness-kw}), for $j$ sufficiently large, $u_{r_j,x_0}$ satisfies the flatness assumptions of the improvement of flatness theorem in \cite{DeSilva:FreeBdRegularityOnePhase}.
    Then, we obtain that $\text{Reg}(u)$ is an open set in $\partial\Omega_u$ and it is locally $C^{1,\alpha}$. The smoothness then follows by applying the hodograph map argument in \cite[Theorem 2]{KinderlehrerNirenberg1977:AnalyticFreeBd}. Finally, $\text{Sing}(u)$ is a closed set with locally finite $\HH^{d-1}$-measure, by \cref{prop:kriventsov-weiss} (ii).
\end{proof}
\subsection{One-phase cones with isolated singularity}
We study blow-ups with isolated singularity, according to the following definition.
\begin{definition}[Cones with isolated singularity]\label{def:cone-isol-sing}
    We say that $b\in H^1_{\text{loc}}(\R^d)$ is a one-phase cone with isolated singularity, if $b\not\equiv0$ is $1$-homogeneous and is a classical solution of the one-phase problem in $\R^d\setminus \{0\}$, in the sense of \cref{def:classical-solution}.
\end{definition}

As shown in the following remark, a one-phase cone with isolated singularity can be characterized as a non-trivial $1$-homogeneous global variational solution whose only possible singular point is the origin.
\begin{remark}\label{rem:outside-origin}
    We observe that $b$ is a one-phase cone with isolated singularity (see \cref{def:cone-isol-sing}), if and only if $b$ is a non-trivial $1$-homogeneous global variational solution (see \cref{def:variational-sol}), and $\text{Sing}(b)\subset\{0\},$ where $\text{Sing}(b)$ is defined in \cref{lemma:existence-blow-up}.

     Indeed, if $b$ is a $1$-homogeneous classical solution in $\R^d\setminus \{0\}$, then $b$ is a variational solution in $\R^d\setminus{\overline B_\eps}$ for every $\eps>0$. Moreover, since $b$ is a classical solution away from the origin, then $b$ is uniformly Lipschitz continuous on $\partial B_1$, and then $\|\nabla b\|_{L^\infty(B_1)}\le L$ by homogeneity. Then, the variational identity can be extended across the origin by testing with a cutoff which vanishes in $B_\eps$ and letting $\eps\to0^+$. Thus, $b$ is a global variational solution.

     Conversely, if $b$ is a $1$-homogeneous global variational solution with $\text{Sing}(b)\subset\{0\}$ then, by \cref{lemma:existence-blow-up}, every point of $\partial\Omega_b\setminus\{0\}$ is regular. Then, $\partial\Omega_b\setminus\{0\}$ is smooth and $b$ is smooth up to the free boundary outside the origin. Thus $b$ is a classical solution in $\R^d\setminus \{0\}$.
\end{remark}
\begin{remark}
As a consequence of the previous remark, if $u\in H^1(B_1)$ is a variational solution and if $b\not\equiv0$ is a blow-up of $u$ at $0\in\partial\Omega_u$, with $\text{Sing}(b)\subset\{0\}$, then $b$ is a one-phase cone with isolated singularity. Indeed, $b$ is a global variational solution and, by the monotonicity formula in \cref{prop:monotonicity-formula}, it is $1$-homogeneous. The conclusion then follows from \cref{rem:outside-origin}.
\end{remark}

\section{The functional \texorpdfstring{$\mathcal{G}$}{G}}\label{sec2.5}
\subsection{Eigenfunctions and eigenvalues on a spherical graph}\label{subsection:expansion} Let $b$ be a one-phase cone with isolated singularity. Note that $\Omega_b\cap\partial B_1$ is connected, then $b$ can be written as a multiple of the first eigenfunction to the spherical Laplacian on its positivity set. More precisely, we consider $\varphi_1^D$ the first (normalized) eigenfunction of the spherical Laplacian in $D:=\Omega_b\cap \partial B_1$, namely $$-\Delta_\theta \varphi_1^D=(d-1)\varphi_1^D\quad\text{in }D, \qquad \varphi_1^D=0\quad\text{on }\partial D,\qquad\|\varphi_1^D\|_{L^2(\partial B_1)}=1.$$ 
Note that the normal derivative of $\varphi_1^D$ is constant on $\partial D$, then we can choose $\kappa_0>0$ such that $b=\kappa_0\varphi_1^D$, so that $$\kappa_0\partial_{\nu}\varphi_1^D=\partial_{\nu} b=-1\quad\text{on } \partial D,$$ where $\nu$ is the outward unit normal to $\partial D$.

Consider a $C^{2,\alpha}$ graph $\zeta:\partial D\to\R$, sufficiently close to $0$ in $C^{2,\alpha}$. We denote by $D_\zeta\subset\partial B_1$ the perturbed domain of $D$ whose boundary is given by \be\label{eq:graph}\partial D_\zeta=\text{graph}_{\partial D}(\zeta):=\{\exp_x(\zeta(x)\nu(x)):\ x\in\partial D\}\subset\partial B_1,\ee where 
$\exp:T\mathbb{S}^{d-1}\to\mathbb{S}^{d-1}$ is the exponential map $\exp_x(v):=\cos(|v|)x+\sin(|v|)\frac{v}{|v|}.$
In particular, we have $D_0=D$. 

We can find an $L^2(D_\zeta)$-orthonormal basis of $H^1_0(D_\zeta)$ of eigenfunctions of the spherical Laplacian $\varphi_1^{D_\zeta},\varphi_2^{D_\zeta},\ldots,\varphi_j^{D_\zeta},\ldots$ corresponding to the eigenvalues $\lambda_1(\zeta)<\lambda_2(\zeta)\le\ldots\le \lambda_j(\zeta)\le\ldots,$ namely $$-\Delta_\theta \varphi_j^{D_\zeta}=\lambda_j(\zeta)\varphi_j^{D_\zeta}\quad\text{in }D_\zeta, \qquad\varphi_j^{D_\zeta}=0\quad\text{on }\partial D_\zeta,\qquad \|\varphi_j^{D_\zeta}\|_{L^2(\partial B_1)}=1,$$ for every $j\in\N$. In particular, if $\zeta$ is sufficiently close to $0$ in $C^{2,\alpha}$ norm, then $\lambda_1(\zeta)$ is close to $\lambda_1(D)=d-1$. Then we deduce the spectral gap \be\label{eq:gap-eigenvalue}\lambda_j(\zeta)-(d-1)\ge \widetilde c>0\quad\text{for every }j\ge2,\ee for some $\widetilde c>0$ depending only on $d$ and $b$.

\subsection{The functional $\mathcal{G}$}\label{subsec:notationsX}
For small $\zeta\in C^{2,\alpha}(\partial D)$ and $\sigma\in\R$, we define the functional \be\label{def:G}\mathcal{G}(\zeta,\sigma):=(\kappa_0^2+\sigma)(\lambda_1(\zeta)-(d-1))+m(\zeta)-m(0),\ee where $m(\zeta):=\mathcal{H}^{d-1}(D_\zeta)$. Note the difference of the functional $\mathcal{G}$ with the corresponding functional in \cite{esv}, where the dependence on $\sigma$ is cubic. We refer to \cref{rem:differences-integrability} for the reason behind this difference.

We denote by $\delta\mathcal{G}(\zeta,\sigma)=(\delta_\zeta\mathcal{G}(\zeta,\sigma),\delta_\sigma\mathcal{G}(\zeta,\sigma))$ the first variation of $\mathcal{G}$, where
\bea \delta_\zeta\mathcal{G}(\zeta,\sigma)[g]:=\frac{d}{dt}\Big|_{t=0}\mathcal{G}(\zeta+tg,\sigma)\quad\text{and}\quad
     \delta_\sigma \mathcal{G}(\zeta,\sigma)[\tau]:=\frac{d}{dt}\Big|_{t=0}\mathcal{G}(\zeta,\sigma+t\tau)
     \eea for $g\in C^{2,\alpha}(\partial D)$ and $\tau\in\R$.
     We also denote by $\nabla \mathcal{G}(\zeta,\sigma)=(\nabla_\zeta \mathcal{G}(\zeta,\sigma),\partial_\sigma \mathcal{G}(\zeta,\sigma))\in L^2(\partial D)\oplus \R$ the gradient of $\mathcal{G}$, defined as $$\langle\nabla_\zeta\mathcal{G}(\zeta,\sigma),g\rangle_{L^2(\partial D)}=\delta_{\zeta}\mathcal{G}(\zeta,\sigma)[g]\quad\text{and}\quad \partial_\sigma\mathcal{G}(\zeta,\sigma)\tau=\delta_{\sigma}\mathcal{G}(\zeta,\sigma)[\tau].$$ 
 In order to explicitly compute the gradient of $\mathcal{G}$, we need to introduce some preliminary definitions from \cite[Appendix A]{esv}. For $\zeta\in C^{2,\alpha}(\partial D)$, with $\zeta$ small in $C^{2,\alpha}$, we consider the function $\Psi_{\zeta}:\partial D\to \partial D_{\zeta}$ $$\Psi_{\zeta}(x):=\exp_x(\zeta(x)\nu(x))=\cos(\zeta(x))x+\sin(\zeta(x))\nu(x),$$ where $\nu$ is the outward unit normal to $\partial D$.
 For a direction of variation $g\in C^{2,\alpha}(\partial D)$, the derivative $$\partial_t\Psi_{\zeta+tg}=g(x)Z_{\zeta+tg}(x),\quad\text{where}\quad Z_{\zeta}(x):=\cos(\zeta(x))\nu(x)-\sin(\zeta(x))x,$$ defines a vector field $X=X_{\zeta,g}$ such that $$\partial_t\Psi_{\zeta+tg}=X(\Psi_{\zeta+tg}).$$ 
 We also define $a_{\zeta}:=Z_{\zeta}\cdot \nu_\zeta(\Psi_\zeta)$, where $\nu_\zeta$ is the outward unit normal to $\partial D_\zeta$. Then \be\label{eq:quig}X(\Psi_\zeta)\cdot \nu_\zeta(\Psi_\zeta)=a_\zeta g.\ee Finally, we consider the Jacobian $J_\zeta$ defined through the change of coordinates $y=\Psi_\zeta(x)$. In particular, $d\HH^{d-2}_{\partial D_\zeta}=J_\zeta d\HH^{d-2}_{\partial D}$.

We can compute the gradient of $\mathcal{G}$ as follows. For the $\zeta$ variables, by \cite[Lemma A.1]{esv}, and if $\kappa:=(\kappa_0^2+\sigma)^{1/2}$, we have that \be\label{eq:lemmaA.1}\begin{aligned}\delta_\zeta\mathcal{G}(\zeta,\sigma)[g]&=\int_{\partial D_\zeta} (X\cdot\nu_\zeta)\,d\HH^{d-2}-\kappa^2\int_{\partial D_\zeta}\partial_X\varphi_1^{D_\zeta}\partial_{\nu_\zeta}\varphi_1^{D_\zeta}\,d\HH^{d-2}
    \\&=\int_{\partial D_\zeta} (X\cdot\nu_\zeta)\Big(1-\kappa^2(\partial_{\nu_\zeta}\varphi_1^{D_\zeta})^2\Big)\,d\HH^{d-2}.\end{aligned}\ee Then, changing variables $y=\Psi_\zeta(x)$ and using \eqref{eq:quig}, we have \be\label{eq:gradient-explicitly-zeta-before}\nabla_\zeta\mathcal{G}(\zeta,\sigma)=a_\zeta J_\zeta\Big(1-(\kappa_0^2+\sigma)(\partial_{\nu_\zeta}\varphi_1^{D_\zeta})^2\Big)\circ \Psi_\zeta,\ee 
     On the other hand, for the $\sigma$ variables, we have
    \be\label{eq:gradient-explicitly-other-before}\partial_\sigma\mathcal{G}(\zeta,\sigma)=\lambda_1(\zeta)-(d-1).\ee

\subsection{The linearized operator and its kernel}\label{def:linerized-operators}

The second variation of $\mathcal{G}$ is defined as \be\label{eq:eq45} \begin{aligned}\delta^2\mathcal{G}(\zeta,\sigma)[(g_1,\tau_1)&,(g_2,\tau_2)]:=\frac{\partial^2}{\partial t_1\partial t_2}\Big|_{t_1=t_2=0}\mathcal{G}(\zeta+t_1g_1+t_2g_2,\sigma+t_1\tau_1+t_2\tau_2)\\&=
(\kappa_0^2+\sigma)\delta^2\lambda_1(\zeta)[g_1,g_2]+\delta^2m(\zeta)[g_1,g_2]+\tau_1\delta\lambda_1(\zeta)[g_2]+\tau_2\delta\lambda_1(\zeta)[g_1].\end{aligned}\ee 
We also consider the second variation of $\mathcal{G}$ only in the $\zeta$ variables \be\label{eq:gzetazeta} \delta_{\zeta\zeta}^2\mathcal{G}(\zeta,\sigma)[g_1,g_2]:=\frac{\partial^2}{\partial t_1\partial t_2}\Big|_{t_1=t_2=0}\mathcal{G}(\zeta+t_1g_1+t_2g_2,\sigma).\ee
Throughout the paper, we will consider the following operators. First, let $\ell\in H^{-1/2}(\partial D)$ be defined as (see \cite[Equation A.10]{esv}) \be\label{eq:operator-ell}\ell[g]:=\delta\lambda_1(0)[g]=-\frac{1}{\kappa_0^2}\int_{\partial D}g\,d\HH^{d-2}.\ee
Then, we consider $L:H^{1/2}(\partial D)\to H^{-1/2}(\partial D) $ defined as (see \cite[Equation A.13]{esv})
\be\label{eq:operator-L}\begin{aligned} L[g_1,g_2]&:=\delta^2_{\zeta\zeta}\mathcal{G}(0,0)[g_1,g_2]=\kappa_0^2\delta^2\lambda_1(0)[g_1,g_2]+\delta^2m(0)[g_1,g_2]\\&=-2\int_{\partial D}g_1g_2H_{\partial D} \,d\HH^{d-2}+2\int_{\partial D}g_1Tg_2 \,d\HH^{d-2}\end{aligned}\ee 
where $H_{\partial D}\ge0$ is the scalar mean curvature of $\partial D$ oriented towards the complement of $D$, $\nu$ is the outward unit normal to $\partial D$, and $T:H^{1/2}(\partial D)\to H^{-1/2}(\partial D)$ is the Dirichlet-to-Neumann operator defined by $Tg=\partial_\nu u_g$, where $u_g$ is the solution of \be\label{eq:def-ug}\begin{cases}
        -\Delta_\theta u_g =(d-1)u_g+\ell[g]b&\quad \text{in }D,\\
        u_g=g&\quad \text{on }\partial D,\\
        \int_{ D} u_gb\,d\HH^{d-1}=0.
    \end{cases}\ee 
For the well posedness of $T$, as well as the existence and uniqueness of $u_g$, we refer to \cite[Equation A.14]{esv} and the subsequent discussion. Notice that $u_g$ is the shape derivative of $\kappa_0\varphi_1^{D_{tg}}$ in the direction $g$.

\medskip

We define the linearized operator $\mathcal{L}:H^{1/2}(\partial D)\oplus\R\to H^{-1/2}(\partial D)\oplus \R$ as \bea\label{eq:def-mathcalL}\mathcal{L}[(g_1,\tau_1),(g_2,\tau_2)]:=\delta^2\mathcal{G}(0,0)[(g_1,\tau_1),(g_2,\tau_2)]=L[g_1,g_2]+\tau_1\ell[g_2]+\tau_2\ell[g_1].\eea In particular, we can write $\mathcal{L}(g,\tau)=(Lg+\tau\ell,\ell[g])\in H^{-1/2}(\partial D)\oplus\R$.

We also define the kernel of $\mathcal{L}$ as follows \be\label{eq:kernel}K:=\ker \mathcal{L}=\{(g,\tau)\in H^{1/2}(\partial D)\oplus \R: \ Lg+\tau\ell=0,\ \ell[g]=0\}.\ee

\subsection{A useful lemma}
Let $\mathcal{L}$ be the linearized operator with kernel $K:=\ker\mathcal{L}$, defined in \cref{def:linerized-operators}.
For simplicity, we denote by $X:=H^{1/2}(\partial D)\oplus \R$, and we write $\xi=(g,\tau)\in X$. We consider the norm on $X$ $$\|\xi\|^2_X:=\|g\|_{H^{1/2}(\partial D)}^2+|\tau|^2.$$ Similarly, we denote by $\|\xi\|^2_{L^2(\partial D)\oplus\R}:=\|g\|_{L^2(\partial D)}^2+|\tau|^2$.
The dual space is $X':=H^{-1/2}(\partial D)\oplus \R$ with norm $\|\cdot \|_{X'}$.

\smallskip

In the next lemma, we summarize some results that will be useful in the following. The proof essentially follows from the results in \cite{esv}.
\begin{lemma}\label{lemma2.3}
   There exist $\overline\delta>0$ and $C>0$ depending only on $d$ and $b$ such that, for $\zeta\in C^{2,\alpha}(\partial D)$ and $\sigma\in\R$, with $\|\zeta\|_{C^{2,\alpha}(\partial D)}+|\sigma|\le \overline\delta$, the following holds.
    \begin{itemize}
        \item[(i)] The first variation of $\mathcal{G}$ vanishes at $(0,0)\in C^{2,\alpha}(\partial D)\oplus \R$, namely $\delta\mathcal{G}(0,0)=0.$
        \item[(ii)] $\delta^2\mathcal{G}(\zeta,\sigma)$ extends to a continuous symmetric bilinear form on $X\times X$. Moreover, there exists a modulus of continuity $\omega:[0,1]\to\R_+$, with $\lim_{t\to0}\omega(t)=0$, such that, for every $\xi_1,\xi_2\in X$, we have \be\label{eq:point2}\left|\delta^2\mathcal{G}(\zeta,\sigma)[\xi_1,\xi_2]-\delta^2\mathcal{G}(0,0)[\xi_1,\xi_2]\right|\le \omega\Big(\|\zeta\|_{C^{2,\alpha}(\partial D)}+|\sigma|\Big)\|\xi_1\|_{X}\|\xi_2\|_{X}.\ee 
        \item[(iii)] There exists a basis of eigenfunctions $\{\xi_i\}\subset X$, orthonormal in $L^2(\partial D)\oplus \R$, and a non-decreasing sequence of eigenvalues $\{\Lambda_i\}$, with $\Lambda_i\to+\infty$, satisfying $$\delta^2\mathcal{G}(0,0)[\xi_i,\xi_j]=\Lambda_i\delta_{ij},$$ where $\xi_i\in C^{2,\alpha}(\partial D)\oplus\R$ for every $i\in\N$. Moreover, $N:=\text{dim}\, K<+\infty$.
        \item[(iv)] 
        Let $K^\perp$ be the orthogonal of $K$ in $L^2(\partial D)\oplus\R$. Denoting by $\mathcal{L}^\perp:=\mathcal{L}|_{K^\perp\cap X}$, we have $$\|\xi^\perp\|_{X}\le C\|\mathcal{L}^\perp \xi^\perp\|_{X'}\quad\text{for every }\xi^\perp\in K^\perp\cap X.$$
    \end{itemize}
\end{lemma}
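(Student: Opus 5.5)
The four items are mostly already in \cite{esv}, so the plan is to reduce each one to the explicit formulas of \cref{subsec:notationsX} and \cref{def:linerized-operators}, adapting only where the $\sigma$-dependence differs (ours is linear, theirs cubic).

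For (i), I will evaluate \eqref{eq:gradient-explicitly-zeta-before} and \eqref{eq:gradient-explicitly-other-before} at $(\zeta,\sigma)=(0,0)$. Since $D_0=D$, we have $\lambda_1(0)=d-1$, so $\partial_\sigma\mathcal G(0,0)=0$. Also $\kappa_0\partial_\nu\varphi_1^D=-1$ on $\partial D$, so $1-\kappa_0^2(\partial_\nu\varphi_1^D)^2=0$, which gives $\nabla_\zeta\mathcal G(0,0)=0$.

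For (ii), I will use the decomposition \eqref{eq:eq45}: $\delta^2\mathcal G(\zeta,\sigma)$ is a combination of $\delta^2\lambda_1(\zeta)$, $\delta^2 m(\zeta)$ and $\delta\lambda_1(\zeta)$, with coefficients depending continuously (linearly) on $\sigma$.
\begin{itemize}
\item By \cite[Appendix A]{esv}, $\delta^2\lambda_1(\zeta)$ and $\delta^2 m(\zeta)$ extend to continuous bilinear forms on $H^{1/2}(\partial D)$. They are given by boundary integrals of $g_1 g_2$ against smooth coefficients, plus a Dirichlet-to-Neumann type term on $D_\zeta$ pulled back by $\Psi_\zeta$.
\item Each of these depends continuously on $\zeta$ in $C^{2,\alpha}$, with a uniform modulus. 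This follows from Schauder estimates for $\varphi_1^{D_\zeta}$, the spectral gap \eqref{eq:gap-eigenvalue}, and the smooth dependence of $a_\zeta$, $J_\zeta$ on $\zeta$.
\item $\delta\lambda_1(\zeta)[g]=-\int_{\partial D}a_\zeta J_\zeta(\partial_{\nu_\zeta}\varphi_1^{D_\zeta})^2\circ\Psi_\zeta\, g$ is bounded on $L^2\subset H^{-1/2}$ and continuous in $\zeta$.
\end{itemize}
Putting these together gives \eqref{eq:point2}.

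For (iii), the argument relies on the structure of $\mathcal L$. The operator $L$ equals $2T$ (a first-order elliptic pseudodifferential operator, Dirichlet-to-Neumann type) plus a bounded zero-order term. Hence the bilinear form $L[g,g]+c\|g\|^2_{L^2}$ is coercive on $H^{1/2}$, by a G\aa rding inequality established as in \cite{esv}. The coupling with $\tau$ is a rank-one perturbation $\tau\ell[g]$ with $\ell\in L^2$. So $\mathcal L+c$ is bounded below on $X$ and has compact resolvent, by the compact embedding $H^{1/2}\hookrightarrow L^2$ and the finite dimension of $\R$. The spectral theorem for self-adjoint operators with compact resolvent then gives the orthonormal eigenbasis in $L^2(\partial D)\oplus\R$, with eigenvalues $\Lambda_i\to+\infty$, and shows that $N=\dim K<\infty$.

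The main obstacle is the regularity $\xi_i\in C^{2,\alpha}\oplus\R$. The eigen-equation reads $2Tg-2H_{\partial D}g+\tau\ell=\Lambda g$ (with $\ell$ a constant), together with $\ell[g]=\Lambda\tau$. This means $u_g$ solves an elliptic equation in $D$ with Robin-type boundary condition $\partial_\nu u_g=(H_{\partial D}+\Lambda/2)u_g+\mathrm{const}$. I will bootstrap with elliptic regularity for oblique/Robin problems on the smooth domain $D$ to reach $C^{2,\alpha}$ (in fact $C^\infty$).

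For (iv), I argue by contradiction with a compactness argument. Suppose $\xi^\perp_n\in K^\perp\cap X$ with $\|\xi^\perp_n\|_X=1$ and $\mathcal L\xi^\perp_n\to0$ in $X'$. G\aa rding gives $\|\xi_n^\perp\|_X^2\le C(\langle\mathcal L\xi_n^\perp,\xi_n^\perp\rangle+\|\xi_n^\perp\|_{L^2}^2)$. Passing to a weak limit $\xi$, compactness yields $\|\xi\|_{L^2}\ge c>0$ and $\mathcal L\xi=0$, so $\xi\in K\cap K^\perp=\{0\}$, a contradiction. Alternatively, one can read (iv) off directly from the eigen-expansion in (iii), since $|\Lambda_i|\ge c>0$ for $\xi_i\notin K$ and the $X$ norm is comparable to the $(1+|\Lambda_i|)^{1/2}$-weighted $\ell^2$ norm.
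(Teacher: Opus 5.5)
Your proposal is correct and follows essentially the same route as the paper. In (i) you evaluate the explicit gradient at $(0,0)$; in (ii) you use \eqref{eq:eq45} with the continuity estimates of \cite{esv}; in (iii) you apply the spectral theorem to a bounded-below self-adjoint operator with compact resolvent, which the paper obtains as a bounded finite-rank perturbation of $L\oplus 0$ and you obtain via an equivalent G\aa rding inequality on $X$; and in (iv) you use the bound $\|\xi\|_X\le C(\|\mathcal L\xi\|_{X'}+\|\xi\|_{L^2(\partial D)\oplus\R})$ plus a compactness contradiction on $K^\perp$. Your Robin-problem bootstrap for the $C^{2,\alpha}$ regularity of the eigenfunctions just makes explicit what the paper calls standard elliptic regularity; one minor point is that the coupling $(g,\tau)\mapsto(\tau\ell,\ell[g])$ has rank two rather than one, which changes nothing.
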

\begin{proof} First, (i) follows from the fact that $\delta_\zeta\mathcal{G}(0,0)=0$ by \cite[Lemma 2.3]{esv} and that $\delta_\sigma \mathcal{G}(0,0)=\lambda_1(0)-(d-1)=0$.

In order to prove (ii), we first show that $\delta^2\mathcal{G}(\zeta,\sigma)$ extends to a symmetric bilinear form on $X\times X$. Indeed, using the identity \eqref{eq:eq45}, it is sufficient to combine that $\delta^2\lambda_1(\zeta)$ and $\delta^2m(\zeta)$ extend to a symmetric bilinear form on $H^{1/2}(\partial D)\times H^{1/2}(\partial D)$ (see \cite[Subsection A.1]{esv}), with the bound $|\delta\lambda_1(\zeta)[g]|\le C\|g\|_{H^{1/2}(\partial D)},$ which is exactly \cite[Equation A.16]{esv}.
It remains to prove the modulus of continuity estimate in \eqref{eq:point2}. By the results in \cite[Subsection A.5]{esv}, we have the two estimates \bea|\delta^2\lambda_1(\zeta)[g_1,g_2]-\delta^2\lambda_1(0)[g_1,g_2]|\le \omega(\|\zeta\|_{C^{2,\alpha}(\partial D)})\|g_1\|_{H^{1/2}(\partial D)}\|g_2\|_{H^{1/2}(\partial D)},\eea \bea |\delta^2m(\zeta)[g_1,g_2]-\delta^2m(0)[g_1,g_2]|\le \omega(\|\zeta\|_{C^{2,\alpha}(\partial D)})\|g_1\|_{H^{1/2}(\partial D)}\|g_2\|_{H^{1/2}(\partial D)}.\eea
On the other hand, using \cite[Equation A.9]{esv}, we have for some explicit\footnote{Using the notation in \cref{subsec:notationsX}, we have $A_\zeta=-a_\zeta J_\zeta\big((\partial_{\nu_\zeta}\varphi_1^{D_\zeta})^2\circ \Psi_\zeta\big).$ Then, the estimate in \eqref{eq:second-estimate} follows from the $C^{2,\alpha}$ dependence of the terms involved in $A_\zeta$.} $A_\zeta:\partial D\to \R$, \be\label{eq:second-estimate}\delta\lambda_1(\zeta)[g]=\int_{\partial D}A_\zeta g\,d\HH^{d-2},\quad\text{where}\quad \|A_\zeta-A_0\|_{L^2(\partial D)}\le \omega(\|\zeta\|_{C^{2,\alpha}(\partial D)}).\ee 
Therefore, $\zeta\mapsto\delta\lambda_1(\zeta)$ is continuous and satisfies $$|\delta\lambda_1(\zeta)[g]-\delta\lambda_1(0)[g]|\le \omega(\|\zeta\|_{C^{2,\alpha}(\partial D)})\|g\|_{H^{1/2}(\partial D)}.$$ Finally, we can control $\sigma \delta^2\lambda_1(\zeta)[g_1,g_2]$ using again the fact that $\delta^2\lambda_1(\zeta)$ extends to a symmetric bilinear form on $H^{1/2}(\partial D)\times H^{1/2}(\partial D)$. This concludes the proof of \eqref{eq:point2}.

Next we prove (iii). Recalling the linearized operators $L$ and $\ell$ from \cref{def:linerized-operators}, we observe that $L$ is a self-adjoint operator, bounded from below, with compact resolvent in $L^2(\partial D)$ (see \cite[Subsection A.2]{esv}). Since $\mathcal{R}(g,\tau):=(\tau\ell,\ell[g])$ is bounded, finite rank and self-adjoint in $L^2(\partial D)\oplus\R$ and $\mathcal{L}(g,\tau)=(Lg+\tau\ell,\ell[g])=(Lg,0)+\mathcal{R}(g,\tau)$, then $\mathcal{L}$ is a bounded finite rank perturbation of $L\oplus0$. Thus, $\mathcal{L}$ is a self-adjoint operator with compact resolvent in $L^2(\partial D)\oplus \R$ as well. Therefore, there exists an orthonormal basis of eigenfunctions $\{\xi_j\}$, with eigenvalues $\{\Lambda_j\}$, $\Lambda_j\to+\infty$, and moreover every eigenspace has finite dimension. In particular, $N:=\text{dim}\, K<+\infty$. Finally, the $C^{2,\alpha}$ regularity of the eigenfunctions follows from standard elliptic regularity.

In order to prove (iv), we first observe that, by the ellipticity of the Dirichlet-to-Neumann operator
$$\|\xi\|_{X}\le C\Big(\|\mathcal L\xi\|_{X'}+\|\xi\|_{L^2(\partial D)\oplus\R}\Big)\quad\text{for every }\xi\in X.$$
Since $H^{1/2}(\partial D)\hookrightarrow L^2(\partial D) $ is compact, the second term in the right-hand side above can be removed in $K^\perp$ by a standard contradiction argument. This concludes the proof.
\end{proof}

\subsection{Lyapunov-Schmidt reduction}

Let $K:=\ker\mathcal{L}$ be the kernel of the second variation, defined in \eqref{eq:kernel}. Its dimension is $N:=\text{dim}\, K<+\infty$, which is finite by \cref{lemma2.3} (iii).
We denote by $P_K$ and $P_{K^\perp}$ the $L^2(\partial D)\oplus \R$ projections on $K$ and $K^\perp$.

The following proposition is a standard application of the Lyapunov-Schmidt reduction. The proof is a slight modification of \cite[Section 2]{Simon} or \cite[Lemma B.1]{esv}.

\begin{proposition}\label{lemma:Lyapunov-Schmidt}
			There exist a neighborhood $U$ of $0$ in $C^{1,\alpha}\oplus\R$ and an analytic map $$Y:K\cap U \to K^\perp\cap( C^{2,\alpha}(\partial D)\oplus\R)\subset X$$ 
            such that the following holds.
            \begin{itemize}
            \item $Y(0)=0$, $D Y(0)=0$. Moreover,
            \bea\label{eq:cond-on-perp}
			{P}_{K^\perp}\big(\nabla\mathcal{G}(\xi+Y(\xi))\big)=0,\quad \mbox{for every }\xi \in K\cap U.
            \eea
    \item Let $\Xi_1,\dots,\Xi_N$ be an orthonormal basis of $K$. Then, there exists $\rho>0$ such that, for every $\mu=(\mu_1,\ldots,\mu_N)\in B_\rho\subset\R^N$, the reduced functional $G:B_\rho\to\R$ defined by
\bea\label{def:G-finito-dim}
G(\mu):=\mathcal{G}(\Xi_\mu +Y(\Xi_\mu)),
\quad \mbox{with}\quad
\Xi_\mu:=\sum_{j=1}^N\mu_j\Xi_{j},
\eea
satisfies ${P}_{K}\big(\nabla\mathcal{G}(\Xi_\mu+Y(\Xi_\mu))\big)=\nabla G(\mu),$ for every $\mu\in B_{\rho} $.
			\end{itemize}
            Moreover, all the critical points of $\mathcal{G}$ in a neighborhood $V$ of $0$ in $C^{2,\alpha}\oplus\R$ are given by  \be\label{eq:Lyapunov-Schmidt-identity}\{\xi+Y(\xi): \xi=\Xi_\mu\in K\cap V, \ \nabla G(\mu)=0\}.\ee
		\end{proposition}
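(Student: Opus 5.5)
The plan is the standard Lyapunov--Schmidt construction via the implicit function theorem, applied to the map
\[
\Phi:(K\cap U)\times\big(K^\perp\cap(C^{2,\alpha}(\partial D)\oplus\R)\big)\to K^\perp\cap(C^{0,\alpha}(\partial D)\oplus\R),\qquad \Phi(\xi,\eta):=P_{K^\perp}\nabla\mathcal{G}(\xi+\eta).
\]
By \eqref{eq:gradient-explicitly-zeta-before} and \eqref{eq:gradient-explicitly-other-before}, $\nabla\mathcal{G}(\zeta,\sigma)$ is given by $a_\zeta J_\zeta\big(1-(\kappa_0^2+\sigma)(\partial_{\nu_\zeta}\varphi_1^{D_\zeta})^2\big)\circ\Psi_\zeta$ together with $\lambda_1(\zeta)-(d-1)$.

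\textbf{Step 1: regularity of $\Phi$.} Pulling the eigenvalue problem on $D_\zeta$ back to $D$ by a diffeomorphism extending $\Psi_\zeta$ gives an elliptic operator whose coefficients depend analytically on $\zeta\in C^{2,\alpha}$. The eigenvalue $\lambda_1(\zeta)$ is simple, so $\lambda_1(\zeta)$ and the pulled-back $\varphi_1^{D_\zeta}\in C^{2,\alpha}$ depend analytically on $\zeta$ (Kato's perturbation theory, or the implicit function theorem applied to $(\varphi,\lambda)\mapsto(-\Delta\varphi-\lambda\varphi,\|\varphi\|^2-1)$). Consequently $\Phi$ is analytic into $C^{1,\alpha}$, and its $\sigma$-dependence is affine. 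This is the same mechanism as in \cite[Appendix A]{esv}.

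\textbf{Step 2: invertibility of the linearization.} At $(0,0)$, $\nabla\mathcal{G}(0)=0$ by \cref{lemma2.3}(i), and $D_\eta\Phi(0,0)=P_{K^\perp}\mathcal{L}|_{K^\perp}=\mathcal{L}^\perp$. One must show that $\mathcal{L}^\perp$ is an isomorphism $K^\perp\cap(C^{2,\alpha}\oplus\R)\to K^\perp\cap(C^{1,\alpha}\oplus\R)$ in the Hölder scale.
\begin{itemize}
\item Injectivity and closed range in the Sobolev scale follow from \cref{lemma2.3}(iv).
\item Surjectivity onto $K^\perp$ follows from self-adjointness with compact resolvent, via the Fredholm alternative.
\item Hölder regularity follows from Schauder estimates for the Dirichlet-to-Neumann operator $T$, which is first order. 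Since $L=2T-2H_{\partial D}$ plus finite-rank terms, $Lg\in C^{1,\alpha}$ gives $g\in C^{2,\alpha}$.
\end{itemize}
This step is the main obstacle. The delicate point is matching the Hölder spaces so that $\mathcal{G}$'s gradient loses exactly one derivative. If this fails, I would instead use the pair $C^{2,\alpha}\to C^{1,\alpha}$, with the neighborhood $U$ taken in $C^{1,\alpha}$, as the statement suggests. On $K$, which is finite dimensional with $C^{2,\alpha}$ elements, all these norms are equivalent.

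\textbf{Step 3: construction of $Y$.} The analytic implicit function theorem yields an analytic $Y$ with $Y(0)=0$ and $\Phi(\xi,Y(\xi))=0$. Differentiating this identity at $0$ gives $\mathcal{L}^\perp DY(0)=-P_{K^\perp}\mathcal{L}|_K=0$, hence $DY(0)=0$.

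\textbf{Step 4: the reduced gradient.} Set $G(\mu)=\mathcal{G}(\Xi_\mu+Y(\Xi_\mu))$. By the chain rule,
\[
\partial_{\mu_j}G=\big\langle\nabla\mathcal{G}(\Xi_\mu+Y(\Xi_\mu)),\,\Xi_j+DY(\Xi_\mu)\Xi_j\big\rangle.
\]
Since $DY\Xi_j\in K^\perp$ and the $K^\perp$-component of $\nabla\mathcal{G}$ vanishes by construction, only the term $\langle\nabla\mathcal{G},\Xi_j\rangle$ survives. This gives $P_K\nabla\mathcal{G}(\Xi_\mu+Y(\Xi_\mu))=\nabla G(\mu)$, identifying $K\cong\R^N$ through the orthonormal basis.

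\textbf{Step 5: critical points.} Let $\xi+\eta$ be a critical point of $\mathcal{G}$ in a small $C^{2,\alpha}$ neighborhood $V$, with $\xi\in K$ and $\eta\in K^\perp$. Then $\Phi(\xi,\eta)=0$, and the local uniqueness part of the implicit function theorem forces $\eta=Y(\xi)$. The remaining condition $P_K\nabla\mathcal{G}=0$ is then $\nabla G(\mu)=0$. Conversely, points of the form $\xi+Y(\xi)$ with $\nabla G(\mu)=0$ have both projections of $\nabla\mathcal{G}$ equal to zero, so they are critical points. This proves \eqref{eq:Lyapunov-Schmidt-identity}.
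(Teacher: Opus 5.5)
Your proposal is correct and matches the argument the paper intends. The paper gives no separate proof; it refers to the standard implicit-function-theorem Lyapunov--Schmidt reduction of \cite[Section 2]{Simon} and \cite[Lemma B.1]{esv}, carried out in the full variables $(\zeta,\sigma)$ and using the analyticity of $\mathcal{G}$, which is exactly what your Steps 1--5 do. One fix: the target space in your first display should be $K^\perp\cap(C^{1,\alpha}(\partial D)\oplus\R)$ rather than $C^{0,\alpha}$, as your Steps 1--2 already assume, since with a $C^{0,\alpha}$ target the first-order operator $\mathcal{L}^\perp$ on $C^{2,\alpha}$ would not be surjective.
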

        Unlike \cite[Lemma B.1]{esv}, where the Lyapunov-Schmidt reduction acts only on the free boundary graph, here the reduction is performed for the full pair $(\zeta,\sigma)$. 
        This reflects the fact that $\mathcal{G}$ depends linearly on $\sigma$, rather than cubically as in \cite{esv}; see also \cref{rem:differences-integrability}.

\smallskip
        
        We also point out that the analyticity of the map $Y$ in \cref{lemma:Lyapunov-Schmidt} will be crucial to apply a finite dimensional Łojasiewicz inequality in the proof of \cref{thm:loj-ineq} below, and it follows from the fact that $\mathcal{G}$ is analytic in a neighborhood of $0$.

\section{Integrability condition}\label{sec:int}
\subsection{Integrability of one-phase cones}
Let $b$ be a one-phase cone with isolated singularity. The linearized problem at $b$ is the following (see for instance \cite{CaffarelliJerisonKenig04:NoSingularCones3D,JerisonSavin15:NoSingularCones4D,fernandezyu-linearized})          \be\label{eq:linearizzato}\begin{cases}
        \Delta v=0&\quad \text{in }\Omega_b,\\
        \partial_\nu v=Hv&\quad \text{on }\partial \Omega_b.
    \end{cases}\ee
    Here $\nu$ is the outward unit normal to $\partial\Omega_b$ and $H$ is the scalar mean curvature of $\partial\Omega_b$ oriented towards the complement of $\Omega_b$. We recall that $H\ge0$, with $H>0$ if and only if $\Omega_b$ is not a half-space. Using the language of minimal surface theory, solutions of the linearized problem \eqref{eq:linearizzato} will be called \emph{Jacobi fields}.

\medskip

We use the following definition of integrability.
\begin{definition}[Integrability condition]\label{def:integrability}
    Let $b$ be a one-phase cone with isolated singularity. We say that $b$ is integrable if for every $1$-homogeneous solution $v$ of \eqref{eq:linearizzato}, there exists a family $\{u_t\}_{t\in(0,1)}$ of $1$-homogeneous classical solutions of the one-phase problem in $\R^d\setminus \{0\}$ (see \cref{def:classical-solution}), satisfying $u_0=b$ and $\partial_tu_t|_{t=0}=v$ in the following sense $$\lim_{t\to0^+}\|u_t-b\|_{L^2(\partial B_1)}=0\quad\text{and}\quad \lim_{t\to0^+}\left\|\frac{u_t-b}{t}-v\right\|_{L^2(D)}=0,$$ where $D:=\Omega_b\cap\partial B_1$.

If for every $1$-homogeneous Jacobi field $v$, the family $\{u_t\}_{t\in(0,1)}$ is given by rotations of $b$, we say that $b$ is integrable through rotations.
\end{definition} 
We next discus some different integrability conditions appearing in the literature for the one-phase problem.

\begin{remark}[Different notions of integrability]\label{rem:differences-integrability}
We emphasize the difference between the notion of integrability used in
\cref{def:integrability} and that of integrability through rotations appearing in
\cite[Definition 2.3]{ERZ25} and \cite[Definition 2.4]{esv}.
The first one coincides with our notion of integrability through rotations, and is the natural analogue of the one in minimal surface theory (see \cite{AA81,adamsimon}). The second one is expressed in terms of the kernel of $\delta^2_{\zeta\zeta}\mathcal{G}(0,0)$, defined in \eqref{eq:gzetazeta}, and is stronger than the first, as discussed in \cite[Remark 2.4]{ERZ25}.
   
Throughout the section, we relate the integrability condition in \cref{def:integrability} to the kernel of the full second variation $\delta^2\mathcal{G}(0,0)$, which is taken
with respect to the full set of variables $(\zeta,\sigma)$, and not only with respect to $\zeta$, as in \cite{esv}. This full kernel is the one naturally associated with the $1$-homogeneous Jacobi fields (see \cref{lemma:correspondence}).
This is the reason why our choice of parametrization of the functional $\mathcal G$ is different from that in \cite{esv}. Indeed, in that paper, the corresponding
functional depends on $\sigma$ through $\sigma^3$, whereas our functional depends
linearly on $\sigma$.   

Moreover, the choice to parametrize $\mathcal{G}$ as in \eqref{def:G} is also natural since critical points $(\zeta,\sigma)$ of $\mathcal G$ correspond to $1$-homogeneous solutions $u^{\zeta,\sigma}$ of the one-phase problem (see \cref{lemma:u-zeta-sigma}). More precisely, by \eqref{eq:gradient-explicitly-zeta-before} and \eqref{eq:gradient-explicitly-other-before}, the condition $\delta_\sigma\mathcal{G}(\zeta,\sigma)=0$ implies that $u^{\zeta,\sigma}$ is harmonic in its positivity set, while $\delta_\zeta\mathcal{G}(\zeta,\sigma)=0$ implies the free boundary condition $|\nabla u^{\zeta,\sigma}|=1$ on $\partial\Omega_{u^{\zeta,\sigma}}$. If the dependence of $\mathcal{G}$ were cubic, the first condition would be lost.
\end{remark}

\subsection{A characterization of the integrability condition}\label{subsection:correspondence}
In this subsection, we characterize the integrability condition in \cref{def:integrability} showing that it is equivalent to the vanishing of the reduced functional $G$ in the Lyapunov-Schmidt reduction \cref{lemma:Lyapunov-Schmidt}.
This is the analogue of \cite[Lemma 1]{adamsimon} in the minimal surface theory.

\begin{proposition}\label{lemma:adam-simon}
A one-phase cone $b$ with isolated singularity is integrable if and only if the reduced functional $G$ in \cref{lemma:Lyapunov-Schmidt} satisfies $G\equiv0$ in a neighborhood of $0$.
\end{proposition}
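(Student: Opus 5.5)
The argument rests on two correspondences. The first is \cref{lemma:u-zeta-sigma}: $1$-homogeneous classical solutions close to $b$ correspond to critical points $(\zeta,\sigma)$ of $\mathcal G$ near $0$, via $u^{\zeta,\sigma}=(\kappa_0^2+\sigma)^{1/2}\varphi_1^{D_\zeta}$, extended $1$-homogeneously. The second is \cref{lemma:correspondence}: $1$-homogeneous Jacobi fields $v$ of \eqref{eq:linearizzato} correspond one-to-one to elements $(g,\tau)\in K$. Concretely, I expect $v|_{\partial B_1}=u_g+\frac{\tau}{2\kappa_0}\varphi_1^D$, where $u_g$ solves \eqref{eq:def-ug}. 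Then $1$-homogeneous harmonicity forces $\ell[g]=0$, and the free-boundary condition $\partial_\nu v=Hv$ becomes $Lg+\tau\ell=0$. With these two facts in hand, both directions reduce to a statement about the finite dimensional function $G$ and its critical set, as in \cite[Lemma 1]{adamsimon}.

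\emph{($G\equiv0\Rightarrow$ integrable).} Suppose $G\equiv0$ on $B_\rho$. Then $\nabla G\equiv0$, so by \eqref{eq:Lyapunov-Schmidt-identity} every $\Xi_\mu+Y(\Xi_\mu)$ with $\mu\in B_\rho$ is a critical point of $\mathcal G$.

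Given a $1$-homogeneous Jacobi field $v$, take the corresponding $\xi=\Xi_{\mu_0}\in K$ and set $(\zeta_t,\sigma_t):=t\xi+Y(t\xi)$. By \cref{lemma:u-zeta-sigma}, $u_t:=u^{\zeta_t,\sigma_t}$ is a $1$-homogeneous classical solution in $\R^d\setminus\{0\}$. Since $Y(0)=0$ and $DY(0)=0$, we get $(\zeta_t,\sigma_t)=t\xi+o(t)$ in $C^{2,\alpha}\oplus\R$. The shape derivative of $\kappa\varphi_1^{D_\zeta}$ in the direction $(g,\tau)$ is $u_g+\frac{\tau}{2\kappa_0}\varphi_1^D$ on $D$. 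This gives $(u_t-b)/t\to v$ in $L^2(D)$, and also $u_t\to b$ in $L^2(\partial B_1)$ because $D_{\zeta_t}\to D$. Hence $b$ is integrable.

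\emph{(integrable $\Rightarrow G\equiv0$).} Here I follow Adams--Simon. Let $\mathcal C$ be the set of critical points of $\mathcal G$ near $0$. By \eqref{eq:Lyapunov-Schmidt-identity}, $\mathcal C$ is the image under $\mu\mapsto\Xi_\mu+Y(\Xi_\mu)$ of $Z:=\{\nabla G=0\}\cap B_\rho$, which is a real analytic set.

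Integrability says that for each $\xi\in K$ there is a family $u_t$ of cones converging to $b$. I will show these $u_t$ have free boundaries that are $C^{2,\alpha}$ graphs $\zeta_t\to0$ over $\partial D$. This uses the convergence $u_t\to b$ in $L^2(\partial B_1)$ together with compactness (\cref{lemma:compactness-kw}) and the regularity theory of \cref{lemma:existence-blow-up}, away from the origin, where $b$ is smooth. With this, $u_t=u^{\zeta_t,\sigma_t}$ for critical points $(\zeta_t,\sigma_t)\to0$, and the derivative condition identifies $\xi$ as a tangent direction of $\mathcal C$ at $0$, i.e.\ $P_K(\zeta_t,\sigma_t)/t\to\xi$. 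Consequently the tangent cone of the analytic set $Z$ at $0$ is all of $\R^N$.

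I expect this to be the main obstacle: upgrading ``every direction is tangent to $Z$'' to ``$Z$ contains a neighborhood of $0$''. Since $Z$ is analytic, I would use its local structure as a finite union of analytic strata. If $\dim Z<N$, then $Z$ lies in the zero set of a nontrivial analytic function, and its tangent cone at $0$ lies in a proper algebraic subset of $\R^N$, which contradicts the previous step. So $Z$ has nonempty interior near $0$, and by analyticity of $\nabla G$ and connectedness, $\nabla G\equiv0$ on $B_\rho$.

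A subtlety is that integrability might only yield one-sided curves, $t\in(0,1)$. The fallback is to apply the hypothesis to both $\xi$ and $-\xi$, which covers the full directions. Finally, $G$ is constant on $B_\rho$ and $G(0)=\mathcal G(0,0)=0$, so $G\equiv0$.
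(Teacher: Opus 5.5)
Your proposal is correct and follows essentially the same route as the paper. The forward direction is identical (curves $t\xi+Y(t\xi)$, \cref{lemma:u-zeta-sigma}, first-order expansion of $u^{\zeta,\sigma}$), and your ``main obstacle'' in the converse dissolves into the paper's own argument: write $G=\sum_{j\ge p}G_j$ with $G_p\not\equiv0$ and divide $\nabla G(\mu(t))=0$, where $\mu(t)=t\mu+o(t)$, by $t^{p-1}$. This gives $\nabla G_p(\mu)=0$ for every $\mu$, a contradiction, so no dimension or stratification argument is needed, and one-sided curves $t\to0^+$ already suffice, so applying the hypothesis to $-\xi$ is unnecessary.
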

In order to prove \cref{lemma:adam-simon}, in the following lemma we show a one-to-one correspondence between the kernel $K:=\ker \mathcal{L}$ of the second variation of $\mathcal{G}$ (see \cref{def:linerized-operators}) and the 1-homogeneous Jacobi fields. 
\begin{lemma}\label{lemma:correspondence}
    The elements $(g,\tau)\in K:=\ker \mathcal{L}$ are in one-to-one correspondence with $1$-homogeneous Jacobi fields, namely $1$-homogeneous solutions of \eqref{eq:linearizzato}. This correspondence is given by $K\ni (g,\tau)\mapsto v_{g,\tau}:=u_g+\frac{\tau}{2\kappa_0^2}b$, where $u_g$ is defined in \eqref{eq:def-ug}.
\end{lemma}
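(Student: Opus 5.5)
We identify $1$-homogeneous functions on $\Omega_b$ with their traces on $D=\Omega_b\cap\partial B_1$. A $1$-homogeneous $v$ is harmonic in $\Omega_b$ if and only if $-\Delta_\theta v=(d-1)v$ in $D$. On the cone boundary the radial direction is tangent, so $\nu$ is the spherical outward normal of $\partial D$. The mean curvature of $\partial\Omega_b$ at points of $\partial B_1$ equals $H_{\partial D}$, since the radial principal curvature vanishes. Hence a $1$-homogeneous Jacobi field is a function $v$ on $\overline D$ with
$$-\Delta_\theta v=(d-1)v\ \text{in }D,\qquad \partial_\nu v=H_{\partial D}v\ \text{on }\partial D .$$
The plan is to check that $v_{g,\tau}$ satisfies these equations when $(g,\tau)\in K$, and conversely that every such $v$ has this form for a unique $(g,\tau)\in K$.

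\emph{From $K$ to Jacobi fields.} Let $(g,\tau)\in K$, so $\ell[g]=0$ and $Lg+\tau\ell=0$. Since $\ell[g]=0$, \eqref{eq:def-ug} gives $-\Delta_\theta u_g=(d-1)u_g$ in $D$. The same holds for $b$, so $v=v_{g,\tau}$ satisfies the interior equation. On $\partial D$ we have $b=0$ and $\partial_\nu b=-1$, hence $v=g$ and $\partial_\nu v=Tg-\frac{\tau}{2\kappa_0^2}$. Reading $Lg+\tau\ell=0$ distributionally through \eqref{eq:operator-L} and \eqref{eq:operator-ell} gives
$$2Tg-2H_{\partial D}g-\frac{\tau}{\kappa_0^2}=0\quad\text{on }\partial D,$$
which is exactly $\partial_\nu v=H_{\partial D}v$. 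For regularity, the $C^{2,\alpha}$ bound in \cref{lemma2.3} (iii) makes $v$ smooth up to $\partial D$, so it is a classical Jacobi field.

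\emph{Injectivity.} Suppose $v_{g,\tau}=0$. Then $g=v|_{\partial D}=0$, so $u_g=0$ by the uniqueness part of \eqref{eq:def-ug}. This leaves $\frac{\tau}{2\kappa_0^2}b=0$, hence $\tau=0$.

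\emph{Surjectivity.} This is the main point. Let $v$ be a $1$-homogeneous Jacobi field and set $g:=v|_{\partial D}$. The first task is $\ell[g]=0$, i.e. $\int_{\partial D}v=0$. I would get this from Green's identity applied to $v$ and $b$ on $D$. Both are eigenfunctions with eigenvalue $d-1$ and $b=0$ on $\partial D$, so
$$0=\int_{\partial D}(v\,\partial_\nu b-b\,\partial_\nu v)=-\int_{\partial D}v .$$
Next, since $\ell[g]=0$, the function $v-u_g$ solves $-\Delta_\theta w=(d-1)w$ in $D$ with $w=0$ on $\partial D$. By simplicity of the first eigenvalue, $w=cb$ for some $c\in\R$. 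Define $\tau:=2\kappa_0^2c$, so that $v=v_{g,\tau}$. Reversing the boundary computation of the first step, $\partial_\nu v=H_{\partial D}v$ becomes $Lg+\tau\ell=0$. Together with $\ell[g]=0$ this gives $(g,\tau)\in K$.

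The only delicate points are sign and normalization bookkeeping: the factor $2$ in $L$, the convention $\partial_\nu b=-1$, and $\ell=-\kappa_0^{-2}\int_{\partial D}$. Their consistency is precisely what forces the coefficient $\frac{\tau}{2\kappa_0^2}$ in the statement. One also needs the regularity of $g$: the Jacobi field is smooth up to $\partial D$, so $g\in C^{2,\alpha}\subset H^{1/2}$. Finally, the two maps constructed above are mutual inverses by construction, which completes the one-to-one correspondence.
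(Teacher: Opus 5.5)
Your proposal is correct and follows the paper's proof essentially step by step. You read $Lg+\tau\ell=0$ as the boundary identity $\partial_\nu v_{g,\tau}=H_{\partial D}v_{g,\tau}$ using $b=0$ and $\partial_\nu b=-1$ on $\partial D$, and for the converse you use Green's identity against $b$ to get $\ell[g]=0$ and simplicity of the first eigenvalue to write $v-u_g=cb$ with $\tau=2\kappa_0^2c$; your explicit injectivity check and the identification of the cone's normal and mean curvature with those of $\partial D$ are points the paper uses only implicitly.
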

\begin{proof}
    Let $(g,\tau)\in K$, so that $Lg+\tau\ell=0$ and $\ell[g]=0$. We consider the solution $u_g$ of \eqref{eq:def-ug}, and we set $v_{g,\tau}:=u_g+\frac{\tau}{2\kappa_0^2}b$. We claim that the $1$-homogeneous extension of $v_{g,\tau}$ solves \eqref{eq:linearizzato}.
    
    Since $\ell[g]=0$, then    
    $$\begin{cases}
        -\Delta_\theta v_{g,\tau} =(d-1)v_{g,\tau}&\quad \text{in }D,\\
        v_{g,\tau}=g&\quad \text{on }\partial D.
    \end{cases}$$
    Let $T:H^{1/2}(\partial D)\to H^{-1/2}(\partial D)$ be the Dirichlet-to-Neumann operator defined by $Tg=\partial_\nu u_g$. Then, by \eqref{eq:operator-ell} and \eqref{eq:operator-L}, for every $\zeta\in H^{1/2}(\partial D)$, we have \be\label{eq:sopra}\begin{aligned}0=(Lg+\tau\ell)[\zeta]&=-2\int_{\partial D}\zeta gH_{\partial D}\,d\HH^{d-2}+2\int_{\partial D}\zeta T g\,d\HH^{d-2}-\frac{\tau}{\kappa_0^2} \int_{\partial D}\zeta\,d\HH^{d-2}\\&=2\int_{\partial D}\zeta\left(-u_gH_{\partial D}+\partial_\nu u_g-\frac{\tau}{2\kappa_0^2}\right)\,d\HH^{d-2}\\&=2\int_{\partial D}\zeta\left(-v_{g,\tau}H_{\partial D}+\partial_\nu v_{g,\tau}\right)\,d\HH^{d-2},\end{aligned}\ee where in the last equality we used that $b=0$ and $\partial_\nu b=-1$ on $\partial D$. By the arbitrariness of $\zeta$, we obtain that $\partial_{\nu}v_{g,\tau}=H_{\partial D}v_{g,\tau}$ on $\partial D$.
    This concludes the proof of the claim.

    Conversely, consider a 1-homogeneous solution $v$ of \eqref{eq:linearizzato}. 
    The trace of $v$ (still denoted by $v$) solves $$\begin{cases}
        -\Delta_\theta v=(d-1)v&\quad \text{in }D,\\
        \partial_\nu v=H_{\partial D}v&\quad \text{on }\partial D.
    \end{cases}$$
     We set $g:=v|_{\partial D}$. A simple integration by parts gives that $$\int_{\partial D}\Big(b\partial_\nu v-v\partial_\nu b\Big)\,d\HH^{d-2}=\int_{D}\Big(b\Delta_\theta v-v\Delta_\theta b\Big)\,d\HH^{d-1}=0.$$ Using also that $b=0$ and $\partial_\nu b=-1$ on $\partial D$, we deduce that $$0=\int_{\partial D}v\,d\HH^{d-2}=\int_{\partial D}g\,d\HH^{d-2}=-\kappa_0^2\ell[g],$$ where in the last equality we used \eqref{eq:operator-ell}. Then $\ell[g]=0$, and thus $-\Delta_\theta u_g=(d-1)u_g$ in $D$.
    Therefore, $-\Delta_\theta (v-u_g)=(d-1)(v-u_g)$ in $D$ and $v-u_g=0$ on $\partial D$. Then $v-u_g=cb$, for some $c\in\R$, and we set $\tau:=2\kappa_0^2c$. Then, since $\ell[g]=0$ and, arguing as in \eqref{eq:sopra}, $Lg+\tau\ell=0$, it follows that $(g,\tau)\in K$. This concludes the proof. 
\end{proof}
In the next lemma we prove that $1$-homogeneous solutions of the one-phase problem correspond to the vanishing of the first variation of $\mathcal{G}$.
\begin{lemma}\label{lemma:u-zeta-sigma}
    Critical points $(\zeta,\sigma)\in C^{2,\alpha}(\partial D)\oplus\R$ of $\mathcal{G}$ correspond to $1$-homogeneous solutions of the one-phase problem in the following sense.
    \begin{itemize}
        \item Let $(\zeta,\sigma)\in C^{2,\alpha}(\partial D)\oplus \R$ be sufficiently small, with $\delta\mathcal{G}(\zeta,\sigma)=0$, then $$u^{\zeta,\sigma}(r,\theta):=r(\kappa_0^2+\sigma)^{1/2}\varphi_1^{D_{\zeta}}(\theta)$$ is a $1$-homogeneous classical solution in $\R^d\setminus\{0\}$, in the sense of \cref{def:classical-solution}.
        \item Conversely, if $u$ is a $1$-homogeneous classical solution in $\R^d\setminus\{0\}$, and $u$ is sufficiently close to $b$ in $L^2(\partial B_1)$, then there exists $(\zeta,\sigma)\in C^{2,\alpha}(\partial D)\oplus\R$ sufficiently small such that $u=u^{\zeta,\sigma}$ and $\delta\mathcal{G}(\zeta,\sigma)=0$. 
    \end{itemize}
\end{lemma}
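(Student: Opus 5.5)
For the first claim, I would use the explicit formulas for the gradient of $\mathcal{G}$. Suppose $\delta\mathcal{G}(\zeta,\sigma)=0$.

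\emph{Interior equation.} By \eqref{eq:gradient-explicitly-other-before}, $\partial_\sigma\mathcal{G}=0$ gives $\lambda_1(\zeta)=d-1$. Then $\varphi_1^{D_\zeta}$ solves $-\Delta_\theta\varphi=(d-1)\varphi$ in $D_\zeta$. The $1$-homogeneous extension $r\varphi$ is harmonic in the cone over $D_\zeta$, using the identity $\Delta(r\varphi)=r^{-1}\big(\Delta_\theta\varphi+(d-1)\varphi\big)$.

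\emph{Free boundary condition.} By \eqref{eq:gradient-explicitly-zeta-before}, $\nabla_\zeta\mathcal{G}=0$ gives
\[
a_\zeta J_\zeta\Big(1-(\kappa_0^2+\sigma)(\partial_{\nu_\zeta}\varphi_1^{D_\zeta})^2\Big)\circ\Psi_\zeta=0 .
\]
For $\zeta$ small in $C^{2,\alpha}$, both $a_\zeta=Z_\zeta\cdot\nu_\zeta(\Psi_\zeta)$ and $J_\zeta$ are close to $1$, so they are positive. Hence $(\kappa_0^2+\sigma)(\partial_{\nu_\zeta}\varphi_1^{D_\zeta})^2=1$ on $\partial D_\zeta$.

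On the free boundary the radial derivative of $u^{\zeta,\sigma}$ vanishes, because $\varphi_1^{D_\zeta}=0$ there. So $|\nabla u^{\zeta,\sigma}|=|\nabla_\theta u^{\zeta,\sigma}|=(\kappa_0^2+\sigma)^{1/2}|\partial_{\nu_\zeta}\varphi_1^{D_\zeta}|=1$ on $\partial\Omega_{u^{\zeta,\sigma}}\setminus\{0\}$.

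\emph{Regularity.} $\partial D_\zeta$ is $C^{2,\alpha}$, so Schauder estimates up to the boundary give $\varphi_1^{D_\zeta}\in C^{2,\alpha}(\overline{D_\zeta})$. Therefore $u^{\zeta,\sigma}$ is a classical solution in $\R^d\setminus\{0\}$.

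For the converse, the main obstacle is to pass from $L^2(\partial B_1)$-closeness of $u$ to $b$ to $C^{2,\alpha}$-closeness of the free boundaries. I would argue as follows.

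\emph{Step 1: Hausdorff closeness.} By \cref{rem:outside-origin}, $u$ is a $1$-homogeneous global variational solution. Its Lipschitz constant is the one for which the compactness must be uniform; I would either assume it is bounded a priori or use the Lipschitz bound available near $b$. Argue by contradiction with a sequence $u_j\to b$ in $L^2(\partial B_1)$. \Cref{lemma:compactness-kw} gives a subsequence converging to a limit $u^\infty$, and $u^\infty=b$ by homogeneity and the trace convergence, so the limit is nontrivial. Hence the free boundaries converge in Hausdorff distance on the annulus $B_2\setminus B_{1/2}$.

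\emph{Step 2: $C^{2,\alpha}$ graph.} Every point of $\partial\Omega_b$ away from $0$ is regular, so $b$ is flat at a uniform scale there. Since $u_j$ is a viscosity solution near those points (\cref{prop:kriventsov-weiss}(iii), as $\Sigma^H$ is empty there by upper semicontinuity of $\Theta$), De Silva's improvement of flatness applies. It gives that $\partial\Omega_{u_j}\cap\partial B_1$ is a $C^{1,\alpha}$ graph over $\partial D$ with small norm. Higher regularity, by the hodograph argument of \cite{KinderlehrerNirenberg1977:AnalyticFreeBd}, upgrades this to a $C^{2,\alpha}$ graph with small norm. So $\Omega_u\cap\partial B_1=D_\zeta$ with $\zeta$ small in $C^{2,\alpha}$.

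\emph{Step 3: identify $u$ and recover criticality.} Since $u$ is $1$-homogeneous and harmonic in $\Omega_u$, its trace is a positive Dirichlet eigenfunction on the connected set $D_\zeta$ with eigenvalue $d-1$. Positivity forces this to be the first eigenvalue, so $\lambda_1(\zeta)=d-1$ and $u=c\,\varphi_1^{D_\zeta}$ with $c>0$. Set $\sigma:=c^2-\kappa_0^2$; it is small because $c$ is close to $\kappa_0$ by the $L^2$-closeness to $b$. The condition $\lambda_1(\zeta)=d-1$ gives $\partial_\sigma\mathcal{G}=0$. The condition $|\nabla u|=1$ on the free boundary gives $(\kappa_0^2+\sigma)(\partial_{\nu_\zeta}\varphi_1^{D_\zeta})^2=1$, so $\nabla_\zeta\mathcal{G}=0$ by \eqref{eq:gradient-explicitly-zeta-before}. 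Hence $\delta\mathcal{G}(\zeta,\sigma)=0$ and $u=u^{\zeta,\sigma}$.
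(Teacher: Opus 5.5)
Your proposal is correct and follows the paper's proof essentially step for step. The direct part reads off $\lambda_1(\zeta)=d-1$ and $(\kappa_0^2+\sigma)(\partial_{\nu_\zeta}\varphi_1^{D_\zeta})^2=1$ from the two components of $\delta\mathcal{G}$, and the converse is the same contradiction argument via \cref{lemma:compactness-kw}, improvement of flatness and the hodograph map, followed by identifying $u$ with a multiple of $\varphi_1^{D_\zeta}$. The uniform Lipschitz bound you leave hedged is also only asserted in the paper, and it holds with $L=1$ without any extra assumption: $|\nabla u|^2$ is subharmonic in $\Omega_u$ and $0$-homogeneous, so $\Delta_\theta|\nabla u|^2\ge0$ on $\Omega_u\cap\partial B_1$, and the maximum principle together with $|\nabla u|=1$ on the free boundary gives $|\nabla u|\le1$.
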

\begin{proof}
    We first assume that $\delta\mathcal{G}(\zeta,\sigma)=0$. Then, $u:=u^{\zeta,\sigma}$ is harmonic in its positivity set $\Omega_u$. Indeed, since $0=\partial_\sigma \mathcal{G}(\zeta,\sigma)=\lambda_1(\zeta)-(d-1)$, then $$-\Delta_\theta \varphi_1^{D_{\zeta}}=\lambda_1(\zeta)\varphi_1^{D_{\zeta}}=(d-1)\varphi_1^{D_{\zeta}}\quad\text{in }D_\zeta.$$ 
    Moreover, by \eqref{eq:lemmaA.1}, we have for every $g\in H^{1/2}(\partial D)$ $$0=\delta_\zeta\mathcal{G}(\zeta,\sigma)[g]=\int_{\partial D_\zeta}g_\zeta \left(1-(\kappa_0^2+\sigma)(\partial_{\nu_\zeta}\varphi_1^{D_\zeta})^2\right)\,d\HH^{d-2}$$
    where $g_\zeta=X_{\zeta,g}\cdot\nu_{\zeta}$. By the arbitrariness of $g_\zeta$, we obtain that $(\kappa_0^2+\sigma)|\nabla_{\theta} \varphi_1^{D_\zeta}|^2=1$ on $\partial D_\zeta.$ This implies that $|\nabla u|=1$ on $\partial \Omega_u\setminus\{0\}$, and thus $u$ is a $1$-homogeneous classical solution of the one-phase problem in $\R^d\setminus\{0\}$. 

We now prove the converse implication. We first claim that $\partial\Omega_u\cap \partial B_1$ is the graph over $\partial D$ of a $C^{2,\alpha}$ function $\zeta$, with small $C^{2,\alpha}$ norm.

Suppose by contradiction that there exists a sequence $\{u^j\}$ of $1$-homogeneous classical solutions in $\R^d\setminus\{0\}$, such that $u^j\to b $ in $L^2(\partial B_1)$ as $j\to+\infty$, and $\partial\Omega_{u^j}\cap\partial B_1$ is not a small graph on $\partial D$.
Since the functions $u^j$ are classical solutions in $A:=B_2\setminus \overline B_{1/2}$, they are uniformly Lipschitz there. Then, by \cref{lemma:compactness-kw} and using that $u^j$ and $b$ are $1$-homogeneous, we deduce that $u^j\to b$ uniformly in $A$, 
and moreover $\Omega_{u^j}\to\Omega_{b}$, $\partial \Omega_{u^j}\to \partial \Omega_{b}$ locally in the Hausdorff distance in $A$, and $\chi_{\Omega_{u^j}}\to \chi_{\Omega_{b}}$ in $L^1_{\text{loc}}(A).$ Therefore, since $b$ is regular in $A$, we can apply the improvement of flatness theorem in \cite{DeSilva:FreeBdRegularityOnePhase} and then the hodograph map argument in \cite{KinderlehrerNirenberg1977:AnalyticFreeBd} to deduce that $\partial\Omega_{u^j}\cap\partial B_1$ is given by a $C^{2,\alpha}$ graph, with small $C^{2,\alpha}$ norm. This gives the desired contradiction.

Then, the spherical free boundary of $u$ is
a normal graph over $\partial D$, with small
$C^{2,\alpha}$ norm. Moreover, since $u$ is a $1$-homogeneous harmonic function in $\Omega_u$, then $u|_{\partial B_1}$ is a multiple of $\varphi_1^{D_\zeta}$. Then, since $u$ is close to $b$ in $L^2(\partial B_1)$, we can find $\sigma\in\R$ small such that $u=u^{\zeta,\sigma}$. Finally, arguing as in the first part of the proof, one can prove that $\delta\mathcal{G}(\zeta,\sigma)=0$.
\end{proof}
\begin{remark}\label{finalremark}
We notice that, for every $\xi=(g,\tau)\in K$ and $(\zeta,\sigma):=\xi+Y(\xi)$, we have the expansion $$u^{\zeta,\sigma}=b+v_{g,\tau}+o(|(g,\tau)|),\quad\text{as }(g,\tau)\to 0\text{ in }K,$$
where $u^{\zeta,\sigma}$ and $v_{g,\tau}$ are defined in \cref{lemma:u-zeta-sigma} and \cref{lemma:correspondence} respectively.
Indeed, by using that $u_g$ defined in \eqref{eq:def-ug} is the shape derivative of $\kappa_0\varphi_1^{D_{tg}}$ in the direction $g$, we obtain $$u^{tg,t\tau}=b+tv_{g,\tau}+o(t)\quad\text{as }t\to0^+,$$ by definition of $v_{g,\tau}$. Moreover, by point (iii) in \cite[Lemma 2.3]{esv}, for every $(\zeta_1,\sigma_1)$, $(\zeta_2,\sigma_2)\in C^{2,\alpha}(\partial D)\oplus\R$ sufficiently close to zero, we have the Lipschitz estimate $$\|u^{\zeta_1,\sigma_1}-u^{\zeta_2,\sigma_2}\|_{L^2(\partial B_1)}\le C\Big(\|\zeta_1-\zeta_2\|_{L^2(\partial D)}+|\sigma_1-\sigma_2|\Big).$$ 
Since $K$ is finite dimensional, the Lipschitz estimate makes the directional expansion uniform over the unit sphere of $K$. Hence
    $$u^{g,\tau}=b+v_{g,\tau}+o(|(g,\tau)|),\quad\text{as }(g,\tau)\to 0\text{ in }K.$$ Finally, using again the Lipschitz estimate above and that $DY(0)=0$, we conclude.
\end{remark}
Now we prove the characterization of the integrability condition in \cref{lemma:adam-simon}.

\begin{proof}[Proof of \cref{lemma:adam-simon}]
    We divide the proof into two steps.

    \smallskip
\noindent\textit{Step 1.}
    We first suppose that $G\equiv0$ in a neighborhood of $0$.    
    Take a $1$-homogeneous Jacobi field $v$, namely a $1$-homogeneous solution of \eqref{eq:linearizzato}. By \cref{lemma:correspondence}, there exists a corresponding $\xi=(g,\tau)\in K$. Moreover, since $\nabla G\equiv0$ in a neighborhood of $0$,  by \cref{lemma:Lyapunov-Schmidt} we have $\delta\mathcal{G}(t\xi+Y(t\xi))=0,$ for every $t>0$ sufficiently small. 
    We consider $$u_t(r,\theta)=r(\kappa_0^2+\sigma(t))^{1/2}\varphi_1^{D_{\zeta(t)}}(\theta),\quad\text{where} \quad (\zeta(t),\sigma(t))=t\xi+Y(t\xi).$$
    We claim that $u_t$ is a family of solutions generating $v$, proving that $b$ is integrable, by the arbitrariness of $v$.
    
    Since $\delta\mathcal{G}(\zeta(t),\sigma(t))=\delta\mathcal{G}(t\xi+Y(t\xi))=0$, by \cref{lemma:u-zeta-sigma}, $u_t$ is a $1$-homogeneous classical solution of the one-phase problem in $\R^d\setminus\{0\}$. We observe that $(\zeta(0),\sigma(0))=(0,0)$, then $u_0=b$. Moreover, by \cref{finalremark}, $\partial_t u_t|_{t=0}=rv_{g,\tau}(\theta)$. This proves that the family $\{u_t\}$ generates $v$, as claimed.

\smallskip
\noindent\textit{Step 2.}
    Conversely, assume that $b$ is integrable. We suppose by contradiction that $G\not\equiv0$ in a neighborhood of $0$. Then, we can write \bea G(\mu)=\sum_{j\ge p}G_j(\mu),\eea
    where $G_j$ are $j$-homogeneous and $G_p\not\equiv0$, for some $p\ge3$. In particular, there exists $\mu\in\R^N$ such that $\nabla G_p(\mu)\not=0$. 
    
    We take the corresponding $\xi=\sum_{j=1}^N\mu_j\Xi_j$, where $\Xi_1,\ldots,\Xi_N$ is an orthonormal basis of $K$.
    Then, we consider the $1$-homogeneous Jacobi field $v$ given by \cref{lemma:correspondence} corresponding to $\xi=(g,\tau)$. By the integrability condition, there exists a family of $1$-homogeneous solutions $\{u_t\}$ such that $u_0=b$ and $\partial_t u_t|_{t=0}=v_{g,\tau}$. 
    
    By \cref{lemma:u-zeta-sigma}, if $t>0$ is sufficiently small, we can find $(\zeta(t),\sigma(t))\in C^{2,\alpha}(\partial D)\oplus \R$, satisfying $\delta\mathcal{G}(\zeta(t),\sigma(t))=0$, such that $u_t=u^{\zeta(t),\sigma(t)}$. Moreover, by \eqref{eq:Lyapunov-Schmidt-identity}, we can write $(\zeta(t),\sigma(t))=\xi(t)+Y(\xi(t))$, where $\xi(t)=(g(t),\tau(t))$.
    Next, we claim that $\zeta(t)$ and $\sigma(t)$ are differentiable at $t=0$, with $\zeta'(0)=g$ and $\sigma'(0)=\tau$. In order to prove the claim, by the integrability condition and \cref{finalremark}, we have $$v_{g(t),\tau(t)}+o(|(g(t),\tau(t))|)=u_t-b=tv_{g,\tau}+o(t)\quad\text{in }L^2(D).$$ Since the map $K\ni(g,\tau)\mapsto v_{g,\tau}$ in \cref{lemma:correspondence} is injective and $K$ is a finite dimensional space, then $$|(g,\tau)|\le C\|v_{g,\tau}\|_{L^2(D)}.$$ Then, the previous expansion first gives $|(g(t),\tau(t))|=O(t)$, and then $(g(t),\tau(t))=t(g,\tau)+o(t)$. Therefore, since $D Y(0)=0$, we have $(\zeta(t),\sigma(t))=t(g,\tau)+o(t)$, concluding the proof of the claim.
    
    Finally, we observe that
    $$\xi(0)=(g(0),\tau(0))=(0,0)\quad\text{and}\quad \xi'(0)=P_K( \zeta'(0), \sigma'(0))=P_K\xi=\xi.$$
    Moreover, we can write $\xi(t)=\sum_{j=1}^N\mu_j(t)\Xi_j$ for some $\mu(t)\in\R^N$ satisfying $\mu(t)=\mu t+o(t)$ as $t\to0^+$.    
    By \eqref{eq:Lyapunov-Schmidt-identity}, we have $\nabla G(\mu(t))=0$. Dividing by $t^{p-1}$ and sending $t\to0^+$, we deduce that $\nabla G_p(\mu)=0$, which is a contradiction. 
\end{proof}

\section{\L ojasiewicz inequality}\label{sec3}
In this section we prove an infinite dimensional \L ojasiewicz inequality for the following spherical Weiss' energy, defined for $\phi\in H^1(\partial B_1)$ as \bea\label{def:F} \mathcal{F}(\phi):=\int_{\partial B_1}\Big(|\nabla_\theta \phi|^2-(d-1)\phi^2\Big)\,d\HH^{d-1}+\HH^{d-1}(\{\phi>0\}\cap\partial B_1).\eea 
This is precisely the spherical energy coming from the slicing lemma \cite[Lemma 12.10]{Velichkov:RegularityOnePhaseFreeBd}. 

We point out that the functional $\mathcal{F}$ is not differentiable. Then, in order to prove a \L ojasiewicz inequality, we introduce an auxiliary gradient map adapted to the decomposition into the first mode and the higher modes, which we call the \emph{reduced gradient} of $\mathcal{F}$ (see
\cref{subsection-important} below). 

 \medskip
 
The main result of this section is the following \L ojasiewicz inequality for $\mathcal{F}$.
\begin{proposition}[\L ojasiewicz inequality]\label{thm:loj-ineq}
    There exist constants $\delta_0>0$ and $\theta\in(0,\frac12]$ depending only on $d$ and $b$, and there exists a constant $C>0$ depending only on $d$, $b$ and $C_0$ such that the following holds. 
    
    Let $b$ be a one-phase cone with isolated singularity, and let $\phi\in H^1(\partial B_1)$ be a non-negative trace satisfying \be\label{eq:closass}\|\phi-b\|_{L^2(\partial B_1)}\le \delta_0\quad\text{and}\quad |\mathcal{F}(\phi)-\mathcal{F}(b)|\le C_0,\ee
    for some $C_0>0$. 
    We also assume that \be\label{eq:graph-assumption}\partial\Omega_\phi\cap \partial B_1=\text{graph}_{\partial D}(\zeta)=\partial D_\zeta,\ D_\zeta:=\Omega_\phi\cap \partial B_1,\text{ for some }\zeta:\partial D\to\R, \ \|\zeta\|_{C^{2,\alpha}(\partial D)}\le\delta_0,\ee where $D:=\Omega_b\cap\partial B_1$ and $\text{graph}_{\partial D}(\zeta)$ is defined in \eqref{eq:graph}. Then,
    $$|\mathcal{F}(\phi)-\mathcal{F}(b)|^{1-\theta}\le C\|\nabla_{\rm red}\mathcal{F}(\phi)\|,$$ where the reduced gradient $\nabla_{\rm red}\mathcal{F}$ and its norm are defined in \eqref{eq:gradient-reduced} and \eqref{eq:normnablaF} below.

    Moreover, if $b$ is integrable (see \cref{def:integrability}), then we can take $\theta=\frac12$.
\end{proposition}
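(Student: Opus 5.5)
The plan is to reduce the inequality to a \L ojasiewicz inequality for $\mathcal{G}$ plus a coercive quadratic part coming from the higher spectral modes. I will assume the reduced gradient of \eqref{eq:gradient-reduced} is built from this decomposition; if its actual definition differs, the steps below have to be adapted.

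\emph{Splitting of the energy.} Under \eqref{eq:graph-assumption}, $\phi\in H^1_0(D_\zeta)$. Expand it in the eigenbasis of \cref{subsection:expansion} as
\[
\phi=\kappa\,\varphi_1^{D_\zeta}+\psi,\qquad \psi=\sum_{j\ge2}c_j\varphi_j^{D_\zeta},
\]
and set $\kappa^2=\kappa_0^2+\sigma$. Here $\sigma$ is small, since $\|\phi-b\|_{L^2}\le\delta_0$ and $\varphi_1^{D_\zeta}$ is close to $\varphi_1^D$. By $L^2$- and $H^1$-orthogonality of the eigenfunctions,
\[
\mathcal{F}(\phi)-\mathcal{F}(b)=\mathcal{G}(\zeta,\sigma)+Q(\psi),\qquad Q(\psi):=\sum_{j\ge2}\big(\lambda_j(\zeta)-(d-1)\big)c_j^2 .
\]
The spectral gap \eqref{eq:gap-eigenvalue} gives $Q(\psi)\simeq\|\psi\|_{H^1}^2$. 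I take the reduced gradient to be the pair $\big(\nabla\mathcal{G}(\zeta,\sigma),\nabla Q(\psi)\big)$, with norm the sum of a norm on the first component (presumably the $X'$ norm) and $\|\psi\|_{H^1}$. Since $\|\nabla Q(\psi)\|\gtrsim\|\psi\|$, the $\psi$-part is handled directly:
\[
Q(\psi)\le C\|\psi\|^2\le C\|\nabla_{\rm red}\mathcal{F}(\phi)\|^{2}.
\]
It therefore suffices to prove $|\mathcal{G}(\zeta,\sigma)|^{1-\theta}\le C\|\nabla\mathcal{G}(\zeta,\sigma)\|$. The constraint $|\mathcal{F}(\phi)-\mathcal{F}(b)|\le C_0$ bounds everything, so $|a+b|^{1-\theta}\le C(|a|^{1-\theta}+|b|^{1-\theta})$ together with $2(1-\theta)\ge1$ combines the two pieces, with $C$ depending on $C_0$.

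\emph{\L ojasiewicz inequality for $\mathcal{G}$.} This follows Simon's argument using \cref{lemma:Lyapunov-Schmidt}. Write $\xi=(\zeta,\sigma)=\xi_K+\xi^\perp$, with $\xi_K=\Xi_\mu\in K$, and set $\eta:=\xi^\perp-Y(\xi_K)\in K^\perp$. The steps are as follows.
\begin{enumerate}
\item By \cref{lemma2.3}(ii),(iv) and the condition $P_{K^\perp}\nabla\mathcal{G}(\xi_K+Y(\xi_K))=0$, a Taylor expansion of $\nabla\mathcal{G}$ around $\xi_K+Y(\xi_K)$ gives
\[
\|\eta\|_X\le C\|\nabla\mathcal{G}(\xi)\|_{X'} .
\]
This needs smallness of $\xi$ in $C^{2,\alpha}$, which is ensured by $\delta_0$.
\item Using the same expansion, $|\mathcal{G}(\xi)-G(\mu)|\le C\|\nabla\mathcal{G}(\xi)\|\,\|\eta\|+C\|\eta\|^2\le C\|\nabla\mathcal{G}(\xi)\|^2$.
\item Similarly, $|\nabla G(\mu)|=|P_K\nabla\mathcal{G}(\xi_K+Y(\xi_K))|\le\|\nabla\mathcal{G}(\xi)\|+C\|\eta\|\le C\|\nabla\mathcal{G}(\xi)\|$.
\item Since $G$ is real analytic on $B_\rho\subset\R^N$ (by analyticity of $Y$ and $\mathcal{G}$), the classical finite dimensional \L ojasiewicz inequality gives $|G(\mu)-G(0)|^{1-\theta}\le C|\nabla G(\mu)|$ for some $\theta\in(0,\tfrac12]$, where $G(0)=0$.
\end{enumerate}
Combining these, $|\mathcal{G}(\xi)|^{1-\theta}\le |G(\mu)|^{1-\theta}+C\|\nabla\mathcal{G}\|^{2(1-\theta)}\le C\|\nabla\mathcal{G}(\xi)\|$.

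\emph{Integrable case.} If $b$ is integrable, then $G\equiv0$ near $0$ by \cref{lemma:adam-simon}. Step 2 then gives $|\mathcal{G}(\xi)|\le C\|\nabla\mathcal{G}\|^2$ directly, which is the case $\theta=\tfrac12$.

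\emph{Main obstacle.} The delicate point is the norm mismatch in steps 1 and 2. The bounds of \cref{lemma2.3} are in $X=H^{1/2}\oplus\R$, while smallness and the modulus of continuity $\omega$ require $C^{2,\alpha}$ closeness. The Taylor remainder must therefore be controlled with the $C^{2,\alpha}$ smallness of $\zeta$ from \eqref{eq:graph-assumption} entering only through $\omega$, and with the quadratic terms measured in $X$. I also need to check that $Y(\xi_K)$ stays $C^{2,\alpha}$-small, which holds because $K$ is finite dimensional with smooth elements. A further point to verify is that $\sigma$ and the higher-mode coefficients are controlled by $\delta_0$ uniformly.
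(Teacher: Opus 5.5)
Your proposal is correct and follows essentially the paper's route. It uses the same splitting $\mathcal F(\phi)-\mathcal F(b)=\mathcal G(\zeta,\sigma)+\mathcal Q_\zeta(\phi_>)$, the spectral-gap bound for the high modes, the Lyapunov--Schmidt decomposition $\xi=\xi_0+\xi^\perp$ with the finite-dimensional \L ojasiewicz inequality for $G$, and $G\equiv0$ to get $\theta=\frac12$ in the integrable case; the only differences are cosmetic (you Taylor-expand around $\xi$ instead of around $\xi_0$, where the paper uses $\delta\mathcal G(\xi_0)[\xi^\perp]=0$ and bounds the quadratic term and remainder separately), and your $X'$ and $H^1$ norms are respectively dominated by and equivalent to the paper's $L^2(\partial D)\oplus\R$ and $H^{-1}(D_\zeta)$ norms, so your inequality implies the stated one.
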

\subsection{The reduced gradient of $\mathcal{F}$}\label{subsection-important}
Since the spherical energy $\mathcal{F}$ is not differentiable (see the discussion below), we define a reduced first variation of $\mathcal{F}$ at some point $\phi$ by rewriting the energy in terms of the graph of the free boundary and the expansion of $\phi$.
More precisely, let $\phi\in H^1(\partial B_1)$ be a non-negative trace sufficiently close to $b$ in $L^2(\partial B_1)$ and such that $\partial\Omega_\phi\cap \partial B_1$ is given by a small $C^{2,\alpha}$ graph $\zeta$ on $\partial D$.
Then, we can expand $\phi$ using the eigenfunctions of $D_\zeta$ as in \cref{subsection:expansion}, namely \bea\label{eq:expansion-phi}\phi=(\kappa_0^2+\sigma)^{1/2}\, \varphi_1^{D_\zeta}+\phi_>,\quad\text{where}\quad \phi_>=\sum_{j=2}^{\infty}c_j\varphi_j^{D_\zeta}\in E^\zeta_>:=\text{span}\{\varphi^{D_\zeta}_j\}_{j\ge2}\eea for some $\sigma\in\R$ small and $\phi_>\in E_>^\zeta$. 

With a slight abuse of notation, we identify $\phi$ with the triple $(\zeta,\sigma,\phi_>)$, keeping in mind that the high-mode space $E_>^\zeta$ depends on $\zeta$. This parametrization allows us to consider inner variations, for the $\zeta $ variables, and outer variations, at fixed $\zeta$, for the $\sigma$ and $\phi_>$ variables.

Notice that a variation in the $\phi_>$ variable could modify the positivity set, and hence it is not, in general, an admissible outer variation of the original functional $\mathcal{F}$. Moreover, the ordered higher eigenvalues $\lambda_j(\zeta)$, $j\ge2$, are not differentiable in general. This issue arises in the presence of multiple eigenvalues, since in general there is no canonical differentiable choice of the corresponding eigenfunctions. Therefore, we differentiate only the first eigenvalue $\lambda_1(\zeta)$, 
and we treat the high-mode contribution through variations in the $\phi_>$ variable, using the spectral gap in \eqref{eq:gap-eigenvalue}. 

For these two reasons, we first decompose the energy using the orthogonality of the eigenfunctions of $D_\zeta$ as follows \be\label{eq:decomposition}\mathcal{F}(\phi)=\mathcal{F}(b)+\mathcal{G}(\zeta,\sigma)+\mathcal{Q}_\zeta(\phi_>),\ee where $\mathcal{G}(\zeta,\sigma)$ is defined in \eqref{def:G}, and
$$\mathcal{Q}_\zeta(\phi_>):=\int_{D_\zeta}\Big(|\nabla_\theta\phi_>|^2-(d-1)\phi_>^2\Big)\,d\HH^{d-1}=\sum_{j=2}^\infty(\lambda_j(\zeta)-(d-1))c_j^2.$$ 
Then, we consider parameter variations $g\in C^{2,\alpha}(\partial D)$, $\tau\in\R$, $\psi_>\in E_>^\zeta$, and we define the reduced first variations of $\mathcal{F}$ 
\bea\label{eq:first-variation-zeta} \delta_{\zeta,{\rm red}}\mathcal{F}(\phi)[g]:=\delta_\zeta\mathcal{G}(\zeta,\sigma)[g]=\frac{d}{dt}\Big|_{t=0}\mathcal{G}(\zeta+tg,\sigma),\eea
\begin{equation*}\label{eq:first-variation-other2}
     \delta_{\sigma,{\rm red}} \mathcal{F}(\phi)[\tau]:= \delta_\sigma \mathcal{G}(\zeta,\sigma)[\tau]=\frac{d}{dt}\Big|_{t=0}\mathcal{G}(\zeta,\sigma+t\tau),
\end{equation*}
\bea\label{eq:first-variation-other1}
\delta_{\phi_>,{\rm red}} \mathcal{F}(\phi)[\psi_>]:=\delta_{\phi_>}\mathcal{Q}_\zeta(\phi_>)[\psi_>]=\frac{d}{dt}\Big|_{t=0}\mathcal{Q}_\zeta(\phi_>+t\psi_>).
\eea 

We denote by $\nabla_{\zeta,{\rm red}} \mathcal{F}(\phi)\in L^2(\partial D)$, $\partial_{\sigma,{\rm red}}\mathcal{F}(\phi)\in \R$ and $D_{\phi_>,{\rm red}} \mathcal{F}(\phi)\in H^{-1}(D_\zeta)$ the corresponding components of the reduced gradient of $\mathcal{F}$,
namely \bea\label{eq:gradient-partial-zeta}\langle\nabla_{\zeta,{\rm red}}\mathcal{F}(\phi),g\rangle_{L^2(\partial D)}=\delta_{\zeta,{\rm red}}\mathcal{F}(\phi)[g],\eea
\bea\label{eq:gradient-partial-other1}\partial_{\sigma,{\rm red}}\mathcal{F}(\phi)\tau=\delta_{\sigma,{\rm red}}\mathcal{F}(\phi)[\tau],\eea
\bea\label{eq:gradient-partial-other2}
 \langle D_{\phi_>,{\rm red}}\mathcal{F}(\phi),\psi_>\rangle_{H^{-1}( D_\zeta)}=\delta_{\phi_>,{\rm red}}\mathcal{F}(\phi)[\psi_>].\eea
Using the notation in \cref{subsec:notationsX}, we can explicitly compute the components of the reduced gradient of $\mathcal{F}$. Precisely, by \eqref{eq:gradient-explicitly-zeta-before} and \eqref{eq:gradient-explicitly-other-before}, we have
     \be\label{eq:gradient-explicitly-zeta}\nabla_{\zeta,{\rm red}} \mathcal{F}(\phi)=a_\zeta J_\zeta\Big(\Big(1-(\kappa_0^2+\sigma)(\partial_{\nu_\zeta}\varphi_1^{D_\zeta})^2\Big)\circ \Psi_\zeta\Big),\ee \be\label{eq:gradient-explicitly-other}\partial_{\sigma,{\rm red}}\mathcal{F}(\phi)=\lambda_1(\zeta)-(d-1)\quad\text{and}\quad D_{\phi_>,{\rm red}} \mathcal{F}(\phi)=2(-\Delta_\theta -(d-1))\phi_>.\ee 
We define the reduced gradient of $\mathcal{F}$ as \be\label{eq:gradient-reduced}\nabla_{\rm red}\mathcal{F}(\phi)=(\nabla_{\zeta,{\rm red}}\mathcal{F}(\phi),\, \partial_{\sigma,{\rm red}}\mathcal{F}(\phi),\, D_{\phi_>,{\rm red}}\mathcal{F}(\phi)),\ee and we consider its norm \be\label{eq:normnablaF}\|\nabla_{\rm red} \mathcal{F}(\phi)\|^2:=\|\nabla_{\zeta,{\rm red}}\mathcal{F}(\phi)\|_{L^2(\partial D)}^2+|\partial_{\sigma,{\rm red}}\mathcal{F}(\phi)|^2+\|D_{\phi_>,{\rm red}}\mathcal{F}(\phi)\|_{H^{-1}(D_\zeta)}^2.\ee

\subsection{The higher modes} In the following lemma, we show an estimate for the higher modes. 
\begin{lemma}\label{lemma:lemma1-loj}
    Under the assumptions of \cref{thm:loj-ineq}, there exists a constant $C>0$ depending only on $d$ and $b$ such that \bea\label{eq:highermodes}|\mathcal{Q}_\zeta(\phi_>)|^{1/2}\le C\|D_{\phi_>,{\rm red}} \mathcal{F}(\phi)\|_{H^{-1}(D_\zeta)}.\eea
\end{lemma}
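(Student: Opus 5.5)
The plan is to work entirely in the spectral decomposition of $H^1_0(D_\zeta)$ and compare $\mathcal{Q}_\zeta(\phi_>)$ with the $H^{-1}(D_\zeta)$ norm of $2(-\Delta_\theta-(d-1))\phi_>$, using the uniform spectral gap \eqref{eq:gap-eigenvalue}. Write $\phi_>=\sum_{j\ge2}c_j\varphi_j^{D_\zeta}$. By orthogonality, $\mathcal{Q}_\zeta(\phi_>)=\sum_{j\ge2}(\lambda_j(\zeta)-(d-1))c_j^2\ge0$, so the absolute value is harmless. Also $D_{\phi_>,{\rm red}}\mathcal{F}(\phi)=2\sum_{j\ge2}(\lambda_j(\zeta)-(d-1))c_j\varphi_j^{D_\zeta}$.

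Next I compute the $H^{-1}(D_\zeta)$ norm in the eigenbasis. I take the $H^1_0(D_\zeta)$ norm to be $\|\nabla_\theta w\|_{L^2}$, for which $\|\sum a_j\varphi_j\|_{H^1_0}^2=\sum\lambda_j a_j^2$. The dual norm is then $\|\sum f_j\varphi_j\|_{H^{-1}}^2=\sum f_j^2/\lambda_j$. Any other equivalent choice changes only constants, and these are uniform in $\zeta$ since $\|\zeta\|_{C^{2,\alpha}}\le\delta_0$. This gives
\[
\|D_{\phi_>,{\rm red}}\mathcal{F}(\phi)\|_{H^{-1}(D_\zeta)}^2=4\sum_{j\ge2}\frac{(\lambda_j(\zeta)-(d-1))^2}{\lambda_j(\zeta)}c_j^2 .
\]

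It then suffices to show, termwise for $j\ge2$, that $\lambda_j-(d-1)\le C\,(\lambda_j-(d-1))^2/\lambda_j$, that is, $\lambda_j\le C(\lambda_j-(d-1))$. Write $\lambda_j=(\lambda_j-(d-1))+(d-1)$. Since $\lambda_j-(d-1)\ge\widetilde c$ by \eqref{eq:gap-eigenvalue}, we get $\lambda_j\le(1+(d-1)/\widetilde c)(\lambda_j-(d-1))$. Summing over $j$ yields $\mathcal{Q}_\zeta(\phi_>)\le C\|D_{\phi_>,{\rm red}}\mathcal{F}(\phi)\|_{H^{-1}(D_\zeta)}^2$ with $C=(1+(d-1)/\widetilde c)/4$, and taking square roots concludes.

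The only delicate point is uniformity. The gap constant $\widetilde c$ and the $H^{-1}$ norm convention must be independent of $\zeta$, which is guaranteed by the smallness $\|\zeta\|_{C^{2,\alpha}}\le\delta_0\le\overline\delta$ in \eqref{eq:graph-assumption}. One must also ensure that $\phi_>\in H^1_0(D_\zeta)$, so that the series manipulations are justified. This holds because $\phi\in H^1(\partial B_1)$ vanishes outside $D_\zeta$ and $\varphi_1^{D_\zeta}\in H^1_0(D_\zeta)$.
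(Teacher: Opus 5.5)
Your proof is correct and follows the paper's argument: expand $\phi_>$ in the eigenbasis of $D_\zeta$, compute the $H^{-1}(D_\zeta)$ norm of $2(-\Delta_\theta-(d-1))\phi_>$ explicitly, and compare termwise with $\mathcal{Q}_\zeta(\phi_>)$ using the uniform gap \eqref{eq:gap-eigenvalue}. The only cosmetic difference is that the paper puts the full $H^1$ norm on $H^1_0(D_\zeta)$, so its weights are $1/(1+\lambda_j(\zeta))$, and it uses monotonicity of $\lambda\mapsto(1+\lambda)/(\lambda-(d-1))$; this is equivalent, uniformly in $\zeta$, to your direct bound $\lambda_j\le(1+(d-1)/\widetilde c)(\lambda_j-(d-1))$.
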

\begin{proof}
First, we observe that, by \eqref{eq:gradient-explicitly-other} $$D_{\phi_>,{\rm red}} \mathcal{F}(\phi)=2(-\Delta_\theta-(d-1))\phi_>=2\sum_{j=2}^\infty(\lambda_j(\zeta)-(d-1))c_j\varphi_j^{D_{\zeta}}.$$ 
Therefore, we have \be\label{eq:Qzeta-eq0}\|D_{\phi_>,{\rm red}}\mathcal{F}(\phi)\|^2_{H^{-1}(D_\zeta)}=\sup_{\eta\in H^1_0(D_\zeta)\setminus\{0\}}\frac{|D_{\phi_>,{\rm red}}\mathcal{F}(\phi)[\eta]|^2}{\|\eta\|^2_{H^1_0(D_\zeta)}}=
4\sum_{j=2}^\infty\frac{(\lambda_j(\zeta)-(d-1))^2}{1+\lambda_j(\zeta)}c_j^2.\ee
where the supremum above is obtained for $$\eta:=\sum_{j=2}^\infty\frac{\lambda_j(\zeta)-(d-1)}{1+\lambda_j(\zeta)}c_j\varphi_j^{D_\zeta}.$$
On the other hand \be\label{eq:Qzeta-eq}\mathcal{Q}_\zeta(\phi_>)=\sum_{j=2}^\infty (\lambda_j(\zeta)-(d-1))c_j^2.\ee
Now we observe that the function $\lambda\mapsto\frac{1+\lambda}{\lambda-(d-1)}$ is decreasing for $\lambda>d-1$, then, for every $j\ge2$, we have
 $$\lambda_j(\zeta)-(d-1)=\frac{1+\lambda_j(\zeta)}{\lambda_j(\zeta)-(d-1)}\frac{(\lambda_j(\zeta)-(d-1))^2}{1+\lambda_j(\zeta)}\le \frac{1+\lambda_2(\zeta)}{\lambda_2(\zeta)-(d-1)}\frac{(\lambda_j(\zeta)-(d-1))^2}{1+\lambda_j(\zeta)}.$$
Using the spectral gap for $\lambda_2(\zeta)$ in \eqref{eq:gap-eigenvalue}, we have $$\lambda_j(\zeta)-(d-1)\le C\frac{(\lambda_j(\zeta)-(d-1))^2}{1+\lambda_j(\zeta)},$$ for some $C=C(d,b)>0$. Therefore, by combining \eqref{eq:Qzeta-eq0} and \eqref{eq:Qzeta-eq}, we get
$$0\le \mathcal{Q}_\zeta(\phi_>)\le C\|D_{\phi_>,{\rm red}}\mathcal{F}(\phi)\|_{H^{-1}(D_\zeta)}^2,$$
concluding the proof.
\end{proof}
\subsection{The first mode} As a consequence of the finite dimensional \L ojasiewicz inequality, we prove the following estimate for the first mode.
\begin{lemma}\label{lemma:lemma2-loj}
    Under the assumptions of \cref{thm:loj-ineq}, there exist constants $\theta\in(0,\frac12]$ and $C>0$ depending only on $d$ and $b$ such that \be\label{eq:claim-kernel}|\mathcal{G}(\zeta,\sigma)|^{1-\theta}\le C\|\nabla_{\zeta,{\rm red}}\mathcal{F}(\phi)\|_{L^2(\partial D)}+C|\partial_{\sigma,{\rm red}}\mathcal{F}(\phi)|.\ee 
\end{lemma}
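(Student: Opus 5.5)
This is the classical Simon-type argument run on the Lyapunov--Schmidt reduction of \cref{lemma:Lyapunov-Schmidt}. Write $\eta:=(\zeta,\sigma)$, which is small in $C^{2,\alpha}(\partial D)\oplus\R$ by \eqref{eq:graph-assumption} and the closeness assumption \eqref{eq:closass}. By \eqref{eq:gradient-explicitly-zeta}--\eqref{eq:gradient-explicitly-other}, the right-hand side of \eqref{eq:claim-kernel} is exactly $\|\nabla\mathcal{G}(\eta)\|_{L^2(\partial D)\oplus\R}$. So the claim is a \L ojasiewicz inequality $|\mathcal{G}(\eta)|^{1-\theta}\le C\|\nabla\mathcal{G}(\eta)\|$ for $\mathcal{G}$ near $0$.

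\textbf{Step 1: decomposition.} Split $\eta=\xi+\eta^\perp$ with $\xi=P_K\eta=\Xi_\mu$, and set $w:=\eta-(\xi+Y(\xi))\in K^\perp$. Since $P_{K^\perp}\nabla\mathcal{G}(\xi+Y(\xi))=0$, a first-order expansion gives
\[
P_{K^\perp}\nabla\mathcal{G}(\eta)=\mathcal{L}^\perp w+O\big(\omega(\|\eta\|_{C^{2,\alpha}}+\|\xi+Y(\xi)\|_{C^{2,\alpha}})\,\|w\|_X\big).
\]
This uses \cref{lemma2.3}(ii) along the segment joining $\xi+Y(\xi)$ to $\eta$. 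Then \cref{lemma2.3}(iv) yields $\|w\|_X\le C\|\nabla\mathcal{G}(\eta)\|_{X'}\le C\|\nabla\mathcal{G}(\eta)\|_{L^2\oplus\R}$.

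\textbf{Step 2: comparison with the reduced functional.} Taylor-expand $\mathcal{G}$ around $\xi+Y(\xi)$, whose $K^\perp$-gradient vanishes:
\[
|\mathcal{G}(\eta)-G(\mu)|\le |P_K\nabla\mathcal{G}(\xi+Y(\xi))\cdot w|+C\|w\|_X^2.
\]
Here $P_K\nabla\mathcal{G}(\xi+Y(\xi))=\nabla G(\mu)$, so the right-hand side is at most $|\nabla G(\mu)|\,\|w\|+C\|w\|_X^2$. In the same way,
\[
|\nabla G(\mu)|\le\|P_K\nabla\mathcal{G}(\eta)\|+C\|w\|_X\le C\|\nabla\mathcal{G}(\eta)\|.
\]

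\textbf{Step 3: finite-dimensional \L ojasiewicz.} $G$ is real analytic on $B_\rho\subset\R^N$, being the composition of the analytic $\mathcal{G}$ with the analytic map $Y$. Since $G(0)=0$ and $\nabla G(0)=0$, the classical \L ojasiewicz inequality \cite{lojasiewicz} gives $|G(\mu)|^{1-\theta}\le C|\nabla G(\mu)|$ for some $\theta\in(0,\frac12]$. Combining the three steps,
\[
|\mathcal{G}(\eta)|^{1-\theta}\le |G(\mu)|^{1-\theta}+C\big(|\nabla G|\,\|w\|+\|w\|^2\big)^{1-\theta}.
\]
The last term is bounded by $C\|\nabla\mathcal{G}(\eta)\|^{2(1-\theta)}\le C\|\nabla\mathcal{G}(\eta)\|$, because $2(1-\theta)\ge1$ and the gradient is small. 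If $b$ is integrable, \cref{lemma:adam-simon} gives $G\equiv0$, so only the quadratic terms remain and $\theta=\frac12$.

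\textbf{Main obstacle.} The difficulty is regularity mismatch. \cref{lemma:Lyapunov-Schmidt} and the continuity estimates are stated in $C^{2,\alpha}$ and $X$, while the gradient is measured in $L^2$. The Taylor remainders must be controlled by $\|w\|_X^2$, with the modulus $\omega$ small uniformly, and this uses only the smallness of $\eta$ in $C^{2,\alpha}$. We also need $\xi=P_K\eta$ to lie in the neighborhood $U$. This holds because $K$ is finite-dimensional and spanned by $C^{2,\alpha}$ elements, so $P_K$ is bounded from $L^2$ into $C^{2,\alpha}$. Finally, we need $\|\cdot\|_{X'}\le\|\cdot\|_{L^2}$, which is immediate.
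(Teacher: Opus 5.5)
Your proposal is correct and follows essentially the same route as the paper. It uses the same decomposition $\eta=(\xi+Y(\xi))+w$ with $w\in K^\perp$, the bound $\|w\|_X\le C\|\nabla\mathcal{G}(\eta)\|$ from \cref{lemma2.3}~(iv), the comparison of $\nabla G(\mu)$ with $\nabla\mathcal{G}(\eta)$, and the finite-dimensional \L ojasiewicz inequality for the analytic reduced functional $G$, with $G\equiv0$ giving $\theta=\frac12$ in the integrable case.

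The only cosmetic difference is in your Step 2. The first-order term $\langle P_K\nabla\mathcal{G}(\xi+Y(\xi)),w\rangle$ vanishes identically, since it pairs an element of $K$ with $w\in K^\perp$. The paper uses this to write $\mathcal{G}(\eta)=G(\mu)+\tfrac12\delta^2\mathcal{G}(0,0)[w,w]+R$ directly; your cruder bound of that term is harmless.
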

\begin{proof}
Let $\delta_0$ be chosen sufficiently small, such that $\delta_0\le\overline\delta$, where $\overline\delta$ is the constant in \cref{lemma2.3}. We also assume that $\delta_0$ is sufficiently small in order to apply the Lyapunov-Schmidt reduction in \cref{lemma:Lyapunov-Schmidt}.

For $\xi=(\zeta,\sigma)$, we decompose $\xi=\xi_K+\widetilde\xi^\perp,$ where $\xi_K\in K$ and $\widetilde\xi^\perp\in K^\perp$. 
We recall that $K:=\ker \mathcal{L}$ is the kernel of the second variation defined in \cref{def:linerized-operators}.
If $Y$ is the Lyapunov-Schmidt map in \cref{lemma:Lyapunov-Schmidt}, setting $\xi^\perp:=\widetilde \xi^\perp-Y(\xi_K)\in K^\perp$, we obtain the decomposition $$\xi=\xi_0+\xi^\perp,\quad\text{where}\quad \xi_0=\xi_K+Y(\xi_K).$$
By \cref{lemma:Lyapunov-Schmidt}, we have that $\nabla\mathcal{G}(\xi_0)[\xi^\perp]=0$.
Applying the identity $$f(1)=f(0)+f'(0)+\frac12A+\int_0^1(1-t)(f''(t)-A)\,dt,\quad A\in\R$$ to $f(t):=\mathcal{G}(\xi_0+t\xi^\perp)$ and $A:=\delta^2\mathcal{G}(0)[\xi^\perp,\xi^\perp]$, we deduce that \be\label{eq:threeterms}\mathcal{G}(\xi)=\mathcal{G}(\xi_0)+\frac12\delta^2\mathcal{G}(0,0)[\xi^\perp,\xi^\perp]+R(\xi),\ee where $$R(\xi):=\int_0^1(1-t) \Big(\delta^2\mathcal{G}(\xi_0+t\xi^\perp)-\delta^2\mathcal{G}(0)\Big)[\xi^\perp,\xi^\perp]\,dt.$$
Therefore, in order to prove \eqref{eq:claim-kernel}, we estimate the three terms in \eqref{eq:threeterms}. 

\smallskip
\noindent\textit{Step 1.}
In the first step we prove the preliminary estimate \be\label{eq:step1} \|\xi^\perp\|_{X}\le C \|\nabla_{\zeta,{\rm red}}\mathcal{F}(\phi)\|_{L^2(\partial D)}+C|\partial_{\sigma,{\rm red}}\mathcal{F}(\phi)|,\ee for some $C=C(d,b)>0$.

In order to prove \eqref{eq:step1}, we first show that, denoting by $P_K$ and $P_{K^\perp}$ the projections in $L^2(\partial D)\oplus\R$ on $K$ and $K^\perp$, for every $f\in X'$ we have \be\label{eq:(v)}\|P_K f\|_{L^2(\partial D)\oplus\R}\le C\|f\|_{X'}\quad\text{and}\quad \|P_{K^\perp} f\|_{X'}\le C\|f\|_{X'}.\ee Indeed, denoting by $\Xi_1,\ldots,\Xi_N$ an orthonormal basis of $K$, the first inequality in \eqref{eq:(v)} follows by writing $P_Kf=\sum_{j=1}^N\langle f,\Xi_j\rangle\Xi_j$ and using that $|\langle f,\Xi_j\rangle |\le \|f\|_{X'}\|\Xi_j\|_{X}$.
On the other hand, the second inequality in \eqref{eq:(v)} is a consequence of the first one and the continuous embedding $L^2(\partial D)\hookrightarrow H^{-1/2}(\partial D)$. This concludes the proof of \eqref{eq:(v)}.

Now we continue the proof of \eqref{eq:step1}.
By \cref{lemma:Lyapunov-Schmidt}, $P_{K^\perp}\nabla\mathcal{G}(\xi_0)=0$, then \bea P_{K^\perp}\nabla\mathcal{G}(\xi)&=P_{K^\perp}\nabla\mathcal{G}(\xi)-P_{K^\perp}\nabla\mathcal{G}(\xi_0)=\int_0^1P_{K^\perp}\delta^2\mathcal{G}(\xi_0+t\xi^\perp)[\xi^\perp]\,dt\\&=\mathcal{L}^\perp\xi^\perp+\int_0^1P_{K^\perp}\Big(\delta^2\mathcal{G}(\xi_0+t\xi^\perp)-\delta^2\mathcal{G}(0)\Big)[\xi^\perp]\,dt,\eea
where we used that $P_{K^\perp}\mathcal{L}\xi^\perp=\mathcal{L}^\perp \xi^\perp$.
Using \cref{lemma2.3} (ii) and \eqref{eq:(v)} together with the smallness assumption on $\xi$, we have that $$\left\|\int_0^1P_{K^\perp}\Big(\delta^2\mathcal{G}(\xi_0+t\xi^\perp)-\delta^2\mathcal{G}(0)\Big)[\xi^\perp]\,dt\right\|_{X'}\le C\omega(\delta_0)\|\xi^\perp\|_{X}.$$ Combining the previous estimates with \cref{lemma2.3} (iv), we have 
\be\label{eq:prima-loj}\|\xi^\perp\|_{X}\le C\|\mathcal{L}^\perp\xi^\perp\|_{X'}\le C\|P_{K^\perp}\nabla\mathcal{G}(\xi)\|_{X'}+C\omega(\delta_0)\|\xi^\perp\|_{X}.\ee Therefore, by choosing $\delta_0$ small enough and using \eqref{eq:(v)}, we have \be\label{eq:seconda-loj} \|\xi^\perp\|_{X}\le C\|P_{K^\perp}\nabla\mathcal{G}(\xi)\|_{X'}\le C\|\nabla\mathcal{G}(\xi)\|_{X'}\le C\|\nabla\mathcal{G}(\xi)\|_{L^2(\partial D)\oplus \R},\ee
where in the last inequality we used that $L^2(\partial D)\hookrightarrow H^{-1/2}(\partial D)$.
Finally, by definition of $\nabla_{\zeta,{\rm red}}\mathcal{F}(\phi)$ and $\partial_{\sigma,{\rm red}}\mathcal{F}(\phi)$ (see \cref{subsection-important}), if $\phi$ is identified by the triple $(\zeta,\sigma,\phi_>)$, we have
\be\label{eq:2gradients}\nabla_{\zeta,{\rm red}}\mathcal{F}(\phi)=\nabla_\zeta\mathcal{G}(\zeta,\sigma)\quad\text{and}\quad \partial_{\sigma,{\rm red}}\mathcal{F}(\phi)=\partial_\sigma\mathcal{G}(\zeta,\sigma).\ee Then we conclude \eqref{eq:step1}.

\smallskip
\noindent\textit{Step 2.}
In order to estimate the first term in \eqref{eq:threeterms}, we use the finite dimensional \L ojasiewicz inequality to prove that \be\label{eq:step2}|\mathcal{G}(\xi_0)|^{1-\theta}\le C\|\nabla_{\zeta,{\rm red}}\mathcal{F}(\phi)\|_{L^2(\partial D)}+C|\partial_{\sigma,{\rm red}}\mathcal{F}(\phi)|,\ee
for some $\theta=\theta(d,b)\in(0,\frac12]$ and $C=C(d,b)>0$. 
 
Denoting by $\Xi_1,\ldots,\Xi_N$ an orthonormal basis of $K$, we can write  $$\xi_0=\Xi_{\mu_0}+Y(\Xi_{\mu_0}), \quad\text{where}\quad \Xi_{\mu_0}:=\sum_{j=1}^N(\mu_0)_j\Xi_j,$$ for some $\mu_0\in\R^N$. We consider the reduced functional $G$ defined as $G(\mu):=\mathcal{G}(\Xi_{\mu}+Y(\Xi_{\mu})),$ so that $G(\mu_0)=\mathcal{G}(\xi_0)$.
Since $G$ is analytic, we can apply the \L ojasiewicz inequality \cite{lojasiewicz} in a neighborhood of $0\in\R^N$. Then, if $\delta_0 $ is chosen small enough, we have
\be\label{eq:loj-finite-dim}|G(\mu_0)|^{1-\theta}\le C|\nabla G(\mu_0)|,\ee for some $\theta=\theta(d,b)\in(0,\frac12]$ and $C=C(d,b)>0$. 

We compute, for $j=1,\ldots,N$ $$\partial_{\mu_j}G(\mu_0)=\delta\mathcal{G}(\xi_0)[\Xi_j+D Y(\Xi_{\mu_0})[\Xi_j]]=\delta\mathcal{G}(\xi_0)[\Xi_j],$$ where in the last equality we used that $P_{K^\perp}\nabla \mathcal{G}(\xi_0)=0$ and $D Y(\Xi_{\mu_0})[\Xi_j]\in K^\perp$, by \cref{lemma:Lyapunov-Schmidt}. 
Then, using again that $P_{K^\perp}\nabla\mathcal{G}(\xi_0)=0$, we have $$|\nabla_{\mu}G(\mu_0)|=\|P_K\nabla\mathcal{G}(\xi_0)\|_{L^2(\partial D)\oplus\R}=\|\nabla\mathcal{G}(\xi_0)\|_{L^2(\partial D)\oplus\R}.$$
Therefore, by \eqref{eq:loj-finite-dim}, we obtain \be\label{eq:intermediate-step2}| \mathcal{G}(\xi_0)|^{1-\theta}\le C\|\nabla\mathcal{G}(\xi_0)\|_{L^2(\partial D)\oplus\R}.\ee

Now we need to replace the gradient in the right-hand side with the reduced gradient of $\mathcal{F}$, and in particular we need to replace $\xi_0$ with $\xi$.
We first observe that, by \cref{lemma:Lyapunov-Schmidt}
$$\nabla\mathcal{G}(\xi_0)=P_{K}\nabla\mathcal{G}(\xi_0)=P_K\nabla\mathcal{G}(\xi)+P_K\Big(\nabla\mathcal{G}(\xi_0)-\nabla\mathcal{G}(\xi)\Big).$$ Moreover, by \eqref{eq:(v)}, we get
$$\left\|P_K\Big(\nabla\mathcal{G}(\xi_0)-\nabla\mathcal{G}(\xi)\Big)\right\|_{L^2(\partial D)\oplus\R}\le C\|\nabla\mathcal{G}(\xi_0)-\nabla\mathcal{G}(\xi)\|_{X'},$$ and, by \cref{lemma2.3} (ii) \bea \left\|\nabla\mathcal{G}(\xi_0)-\nabla\mathcal{G}(\xi)\right\|_{X'}&=\left\|\int_0^1\delta^2\mathcal{G}(\xi_0+t\xi^\perp)[\xi^\perp]\,dt\right\|_{X'}\le C\|\xi^\perp\|_{X}.\eea 
 Then, combining the previous estimate and using \eqref{eq:step1}, we have \bea\label{eq:eq1-step2}\begin{aligned}\|\nabla\mathcal{G}(\xi_0)\|_{L^2(\partial D)\oplus\R}&\le \|P_{K}\nabla\mathcal{G}(\xi)\|_{L^2(\partial D)\oplus\R}+C\|\xi^\perp\|_{X}\\&\le \|\nabla\mathcal{G}(\xi)\|_{L^2(\partial D)\oplus\R}+ C \|\nabla_{\zeta,{\rm red}}\mathcal{F}(\phi)\|_{L^2(\partial D)}+C|\partial_{\sigma,{\rm red}}\mathcal{F}(\phi)|\\&\le C \|\nabla_{\zeta,{\rm red}}\mathcal{F}(\phi)\|_{L^2(\partial D)}+C|\partial_{\sigma,{\rm red}}\mathcal{F}(\phi)|,\end{aligned}\eea where in the last inequality we used \eqref{eq:2gradients}. Using this estimate in \eqref{eq:intermediate-step2}, we conclude \eqref{eq:step2}.

\smallskip
\noindent\textit{Step 3.}
Next we estimate the second term in \eqref{eq:threeterms}, proving that \be\label{eq:step3}|\delta^2\mathcal{G}(0)[\xi^\perp,\xi^\perp]|^{1/2}\le C\|\nabla_{\zeta,{\rm red}}\mathcal{F}(\phi)\|_{L^2(\partial D)}+C|\partial_{\sigma,{\rm red}}\mathcal{F}(\phi)|.\ee
Recalling the linearized operator $\mathcal{L}^\perp$ in \cref{lemma2.3} (iv), we have $$|\delta^2\mathcal{G}(0)[\xi^\perp,\xi^\perp]|=|\langle \mathcal{L}^\perp\xi^\perp,\xi^\perp\rangle|\le \|\mathcal{L}^\perp \xi^\perp\|_{X'}\| \xi^\perp\|_{X}.$$ Moreover, in the proof of Step 1 (see \eqref{eq:prima-loj} and \eqref{eq:seconda-loj}) we proved that $$\| \xi^\perp\|_{X}\le C\|\mathcal{L}^\perp \xi^\perp\|_{X'}\le C\|\nabla_{\zeta,{\rm red}}\mathcal{F}(\phi)\|_{L^2(\partial D)}+C|\partial_{\sigma,{\rm red}}\mathcal{F}(\phi)|.$$ Then \eqref{eq:step3} follows by combining the last two estimates.

\smallskip
\noindent\textit{Step 4.}
Now we estimate the third term in \eqref{eq:threeterms}, proving that \be\label{eq:step4}|R(\xi)|^{1/2}\le C\|\nabla_{\zeta,{\rm red}}\mathcal{F}(\phi)\|_{L^2(\partial D)}+C|\partial_{\sigma,{\rm red}}\mathcal{F}(\phi)|.\ee
This estimate immediately follows by combining \cref{lemma2.3} (ii) with \eqref{eq:step1}.
 
 \smallskip
\noindent\textit{Step 5.} In order to conclude the proof, we first observe the trivial inequality \be\label{eq:trivial-inequ}|t+s|^{1-\theta}\le |t|^{1-\theta}+|s|^{1-\theta}\quad\text{for every }t,s\in\R, \ \theta\in\left(0,1/2\right]. \ee Moreover, by \eqref{eq:step3} and \eqref{eq:step4}, we have $$|\delta^2\mathcal{G}(0,0)[\xi^\perp,\xi^\perp]|+|R(\xi)|\le C\|\nabla_{\zeta,{\rm red}}\mathcal{F}(\phi)\|_{L^2(\partial D)}+C|\partial_{\sigma,{\rm red}}\mathcal{F}(\phi)|\le C,$$ for some $C=C(d,b)>0$, if $\delta_0$ is chosen sufficiently small.
Therefore, by \eqref{eq:threeterms} and \eqref{eq:step2}, we have \bea |\mathcal{G}(\xi)|^{1-\theta}&\le |\mathcal{G}(\xi_0)|^{1-\theta}+\left|\frac12\delta^2\mathcal{G}(0,0)[\xi^\perp,\xi^\perp]\right|^{1-\theta}+|R(\xi)|^{1-\theta}\\&\le |\mathcal{G}(\xi_0)|^{1-\theta}+C|\delta^2\mathcal{G}(0,0)[\xi^\perp,\xi^\perp]|^{1/2}+C|R(\xi)|^{1/2}\\&\le 
C\|\nabla_{\zeta,{\rm red}}\mathcal{F}(\phi)\|_{L^2(\partial D)}+C|\partial_{\sigma,{\rm red}}\mathcal{F}(\phi)|,\eea for some $C=C(d,b)>0$, concluding the proof.
 \end{proof}

\subsection{Proof of the \L ojasiewicz inequality}
Now we can prove the \L ojasiewicz inequality for the spherical energy $\mathcal{F}$ in \cref{thm:loj-ineq}.
\begin{proof}[Proof of \cref{thm:loj-ineq}]
    We recall the decomposition of the energy in \eqref{eq:decomposition}, namely $\mathcal{F}(\phi)-\mathcal{F}(b)=\mathcal{G}(\zeta,\sigma)+\mathcal{Q}_\zeta(\phi_>)$. In particular, since $|\mathcal{F}(\phi)-\mathcal{F}(b)|\le C_0$ by \eqref{eq:closass} and $|\mathcal{G}(\zeta,\sigma)|\le C$, we have $$|Q_\zeta(\phi_>)|\le C(C_0+1),$$ for some $C=C(d,b)>0$. Then, by combining the estimates in \cref{lemma:lemma1-loj} and \cref{lemma:lemma2-loj} with the inequality in \eqref{eq:trivial-inequ}, we get \bea\label{eq:fineloj}\begin{aligned}|\mathcal{F}(\phi)-\mathcal{F}(b)|^{1-\theta}&\le |\mathcal{G}(\zeta,\sigma)|^{1-\theta}+|Q_\zeta(\phi_>)|^{1-\theta}\le |\mathcal{G}(\zeta,\sigma)|^{1-\theta}+C|Q_\zeta(\phi_>)|^{1/2} \\&\le C\|\nabla_{\zeta,{\rm red}}\mathcal{F}(\phi)\|_{L^2(\partial D)}+C|\partial_{\sigma,{\rm red}}\mathcal{F}(\phi)|+C\|D_{\phi_>,\rm red}\mathcal{F}(\phi)\|_{H^{-1}(D_\zeta)},\end{aligned}\eea for some $C=C(d,b,C_0)>0$, concluding the proof of the \L ojasiewicz inequality. 
\end{proof}
\begin{remark}\label{corollary:gamma12}
    We observe that if $b$ is integrable, then we can choose $\theta=\frac12$ in \cref{lemma:lemma2-loj}.
    Indeed, in this case, following the notation in \cref{lemma:lemma2-loj}, we have $\mathcal{G}(\xi_0)=G(\mu_0)=0$, by \cref{lemma:adam-simon}. In particular, we can choose $\theta=\frac12$ in \eqref{eq:step2}. Then, the thesis follows by combining the estimates in \eqref{eq:step3} and \eqref{eq:step4}.
\end{remark}
\section{Radial control of the reduced gradient}\label{sec4}

The main result of this section is the following proposition, in which we prove that the reduced gradient of $\mathcal{F}$ is controlled by the radial derivative $r\partial_ru_r=\nabla u_r\cdot x-u_r$. 
\begin{proposition}\label{prop:propfundamental}
    There exists $\delta>0$ depending only on $d$, $b$ and $L$ such that the following holds. 
    Suppose that $u$ satisfies the assumptions of \cref{thm:epiperimetric-inequality}, for some $\rho\in(0,1]$ and $\delta>0$. We denote by $\phi_r(\theta):=r^{-1} u(r,\theta)\in H^1(\partial B_1)$. Then $$\int_{5\rho/8}^{7\rho/8}\frac1r\|\nabla_{{\rm red}}\mathcal{F}(\phi_r)\|^2\,dr\le C\int_{\rho/2}^{\rho}\frac1r\int_{\partial B_1}(\nabla u_r\cdot x-u_r)^2\,d\HH^{d-1}\,dr,$$ for some $C>0$ depending only on $d$ and $b$.
\end{proposition}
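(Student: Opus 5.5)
By scaling we may assume $\rho=1$, since the statement is invariant under $u\mapsto u_\rho$ and the measure $dr/r$ is dilation invariant. Write $u=b+w$ in polar coordinates, with $\phi_r=r^{-1}u(r,\cdot)$. The plan has three stages.

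\emph{Stage 1: smooth parametrization.} We first show, by a compactness argument, that for $\delta$ small the solution has a classical structure in the annulus $A:=B_{15/16}\setminus\overline B_{9/16}$. Suppose there are variational solutions $u^j$ with Lipschitz constant $L$ satisfying $\|u^j-b\|_{L^2(\partial B_1)}\to0$ and $|W(u^j)-W(b)|\to0$, together with $\Theta(u^j,0)\ge W(b)$.
\begin{itemize}
\item By monotonicity, $W(u^j_r)$ is squeezed between $W(b)$ and $W(b)+o(1)$ for $r\in[1/2,1]$.
\item By \eqref{eq:monotonicity-formula} the radial derivative then has vanishing $L^2$ norm on $B_1\setminus B_{1/2}$.
\item \cref{lemma:compactness-kw} gives a limit $u^\infty$. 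It is nontrivial because its trace equals $b$ on $\partial B_1$, and it is $1$-homogeneous in the annulus, so $u^\infty=b$ there.
\item Hausdorff convergence of the free boundaries, the absence of $\Sigma^H$ near the regular cone $\partial\Omega_b\setminus\{0\}$ (by upper semicontinuity of $\Theta$), \cref{prop:kriventsov-weiss}(iii), De Silva's improvement of flatness, and Kinderlehrer--Nirenberg together give the following.
\end{itemize}
In $A$, $\partial\Omega_u$ is a $C^{2,\alpha}$ graph over $\partial\Omega_b$ that is small in $C^{2,\alpha}$. Moreover $u$ is close to $b$ in $C^{2,\alpha}$ up to the free boundary, after composing with a diffeomorphism. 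Hence each $\phi_r$, $r\in[9/16,15/16]$, satisfies the hypothesis \eqref{eq:graph-assumption} with some $\zeta_r$, and we can write $\phi_r\leftrightarrow(\zeta_r,\sigma_r,(\phi_r)_>)$.

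\emph{Stage 2: reduced gradient via the equations.} We rewrite $\Delta u=0$ and $|\nabla u|=1$ in polar coordinates. In $\Omega_u$ we have
\[
\Delta_\theta\phi_r+(d-1)\phi_r = -(r\partial_r)^2\phi_r-(d-1)\,r\partial_r\phi_r .
\]
Since $r\partial_r\phi_r=\frac1r(\nabla u\cdot x-u)$ at scale $r$, set $\dot\phi:=r\partial_r\phi_r$.
\begin{itemize}
\item Project onto the first mode. The $\sigma$-component $\lambda_1(\zeta_r)-(d-1)$ is controlled by testing the equation against $\varphi_1^{D_{\zeta_r}}$ and integrating by parts. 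The boundary term vanishes on $\partial D_{\zeta_r}$ because $\phi_r=0$ there, which leaves $|\lambda_1-(d-1)|\lesssim\|\dot\phi\|+\|r\partial_r\dot\phi\|_{H^{-1}}$.
\item Projecting onto the high modes gives $\|D_{\phi_>,{\rm red}}\mathcal F(\phi_r)\|_{H^{-1}(D_{\zeta_r})}\lesssim\|\dot\phi\|_{H^{1}}+\|r\partial_r\dot\phi\|_{H^{-1}}$, again with terms in $\dot\phi$.
\item For $\nabla_{\zeta,{\rm red}}\mathcal F$, evaluate $|\nabla u|^2=|\nabla_\theta\phi_r|^2+(\phi_r+\dot\phi)^2=1$ on the free boundary. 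Since $\phi_r=0$ there, this reads $|\partial_{\nu}\phi_r|^2+\dot\phi^2=1$. Splitting $\phi_r=\kappa\varphi_1^{D_{\zeta_r}}+(\phi_r)_>$, the quantity $1-\kappa^2(\partial_\nu\varphi_1)^2$ is controlled by $|\dot\phi|^2$ on $\partial D_{\zeta_r}$ plus $\|\partial_\nu(\phi_r)_>\|_{L^2(\partial D_{\zeta_r})}$.
\end{itemize}
Using the $C^{2,\alpha}$ closeness from Stage 1 (so $\dot\phi$ is small and quadratic terms are absorbed), all three components are bounded by $\|\dot\phi\|_{H^{2}(D_{\zeta_r})}$-type norms, together with traces of $\dot\phi$ and $\partial_r\dot\phi$ on the moving boundary.

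\emph{Stage 3: Caccioppoli-type estimate in time (main obstacle).} Integrating Stage 2 over $r\in(5/8,7/8)$ with $dr/r$, we must bound higher norms of $\dot\phi$, namely tangential and normal derivatives and traces on the moving boundary, by $\int_{1/2}^{1}\|\dot\phi\|_{L^2(\partial B_1)}^2\,dr/r$.

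The function $v:=\nabla u\cdot x-u$ is harmonic in $\Omega_u$, since $x\cdot\nabla$ commutes with $\Delta$ up to a multiple of $\Delta$. On the free boundary it satisfies an oblique-type condition obtained by differentiating $|\nabla u|^2=1$ along the radial (tangent to the cone) field. This gives a linear elliptic problem in the perturbed cone domain $\Omega_u\cap A$ with $C^{1,\alpha}$ coefficients, in which $v$ solves $\partial_\nu v=H_u v$ up to small errors.

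The plan is to flatten $\Omega_u\cap A$ onto $\Omega_b\cap A$ by the diffeomorphism from Stage 1. We then apply interior-in-$r$, up-to-boundary Schauder/$L^2$ regularity, with cutoffs in $r$ supported in $(1/2,1)$, to get $\|v\|_{H^2((\Omega_u\cap B_{7/8})\setminus B_{5/8})}\lesssim\|v\|_{L^2(A')}$. Since $u$ is Lipschitz, $v$ is zero on $\partial\Omega_u$ and bounded, so $L^2(A')$ can be replaced by $\int\!\int v^2$ in polar form.

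The difficulty is making this boundary regularity uniform given only the smallness from compactness. My first attempt would be a contradiction/compactness argument: normalize $v^j/\|v^j\|_{L^2}$, pass to the limit on the fixed cone where the Jacobi-type problem has uniform estimates, and transfer back using the $C^{2,\alpha}$ convergence of the domains. Then, combining with Stages 1 and 2 and integrating, we obtain the inequality with $C=C(d,b)$.
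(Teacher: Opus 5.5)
Your three stages follow the paper's architecture. Stage 1 is the compactness lemma giving a small $C^{2,\alpha}$ graph on the annulus (\cref{lemma:smooth-parametrization-lemma}). Stage 2 bounds $\nabla_{\rm red}\mathcal{F}(\phi_r)$ through the polar form of $\Delta u=0$ and $|\nabla u|=1$ (\cref{lemma:gradient-explicitly-bounds} and \cref{proposition:corollary}). Stage 3 is an estimate for $v=\nabla u\cdot x-u$ based on $\Delta v=0$ and $\partial_\nu v=Hv$ (\cref{lemma:estimateintime}). As written, however, the argument is not closed, mainly because you aim at more than is needed.

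In Stage 2 you never say how $\|\partial_\nu(\phi_r)_>\|_{L^2(\partial D_{\zeta_r})}$ is controlled by $\dot\phi$, and here the spectral gap \eqref{eq:gap-eigenvalue} is indispensable. The slice-wise estimate $\|(\phi_r)_>\|_{H^2(D_{\zeta_r})}\le C\|(\Delta_\theta+d-1)(\phi_r)_>\|_{L^2}+C\|(\phi_r)_>\|_{L^2}$ on the uniformly $C^{2,\alpha}$ domains contains a lower-order term that is not a $\dot\phi$-quantity, and only the gap removes it. Combined with the fact that the high-mode projection commutes with $\Delta_\theta+(d-1)$, this gives $\|\partial_\nu(\phi_r)_>\|_{L^2(\partial D_{\zeta_r})}\le C\|\Delta_\theta\phi_r+(d-1)\phi_r\|_{L^2(D_{\zeta_r})}\le C\|r\partial_r\dot\phi\|_{L^2(D_{\zeta_r})}+C\|\dot\phi\|_{L^2(D_{\zeta_r})}$. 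After this step only $\|\dot\phi\|_{L^2(\partial D_{\zeta_r})}$, $\|r\partial_r\dot\phi\|_{L^2(D_{\zeta_r})}$ and $\|\dot\phi\|_{L^2(D_{\zeta_r})}$ remain. No $H^2$ norms of $\dot\phi$ and no traces of $\partial_r\dot\phi$ are needed.

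Consequently Stage 3 needs only an $H^1$ Caccioppoli inequality, and the paper proves it directly rather than by compactness. In $t=\log r$, $q=\dot\phi$ solves $\partial_{tt}q+d\,\partial_tq+\Delta_\theta q+(d-1)q=0$ in the moving domain, with $\partial_{\bar\nu}q=\beta q$ on its lateral boundary and $|\beta|\le C$ because $\|rH\|_{L^\infty}\le C$. Testing with $\eta(t)^2q$ and absorbing the boundary term through a trace inequality that is uniform in $t$ gives the bound, with constants depending only on the geometry fixed in Stage 1. The same trace inequality then handles $\|\dot\phi\|_{L^2(\partial D_{\zeta_r})}$.

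Your contradiction sketch does not substitute for this. Normalizing $v^j$ in $L^2$ gives no compactness in the higher norm you are trying to bound. Moreover, $H^2$ regularity up to the free boundary for the Robin problem would in general require more regularity of $H$ than $C^{2,\alpha}$ control of the graph provides, so Stage 1 would have to be upgraded.

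Three corrections:
\begin{enumerate}
\item $v$ does not vanish on $\partial\Omega_u$; there $v=-x\cdot\nu$.
\item The Robin condition holds exactly, not ``up to small errors''. Differentiate $|\nabla u|=1$ along directions tangent to $\partial\Omega_u$; the radial field is tangent to $\partial\Omega_b$, not to $\partial\Omega_u$. This gives $\partial_\nu v=D^2u\,x\cdot\nu=(x\cdot\nu)\partial_{\nu\nu}u=Hv$ with $H=-\partial_{\nu\nu}u$. Exactness matters, because the target inequality is homogeneous in $v$ and an inhomogeneous error could not be absorbed.
\item The polar equation reads $\Delta_\theta\phi_r+(d-1)\phi_r=-(r\partial_r)^2\phi_r-d\,r\partial_r\phi_r$, with coefficient $d$ rather than $d-1$ in the last term.
\end{enumerate}
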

\subsection{Smooth parametrization lemma}
We first prove that if $u$ is sufficiently close to $b$, then the free boundary of $u$ is a $C^{2,\alpha}$ (actually smooth) graph in an annulus.
\begin{lemma}\label{lemma:smooth-parametrization-lemma}
    For every $\widetilde\delta>0$ there exists $\delta>0$ depending only on $\widetilde\delta$, $d$, $b$ and $L$ such that the following holds. 
    
    Suppose that $u$ satisfies the assumptions of \cref{thm:epiperimetric-inequality}, for some $\rho\in(0,1]$ and $\delta>0$. Then, there exists $\zeta(r,\cdot):\partial D\to\R,$ with $\zeta\in C^{2,\alpha}((9\rho/16,15\rho/16)\times \partial D)$ such that, for every $r\in(9\rho/16,15\rho/16)$, we have \be\label{eq:control-zeta}\partial\Omega_{u_r}\cap \partial B_1=\text{graph}_{\partial D}(\zeta(r,\cdot)),\ D_{\zeta(r,\cdot)}=\Omega_{u_r}\cap \partial B_1\quad\text{and}\quad  \|\zeta(\rho\,\cdot,\cdot)\|_{C^{2,\alpha}((9/16,15/16)\times\partial D)}\le\widetilde \delta.\ee 
\end{lemma}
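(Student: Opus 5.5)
By scaling, it suffices to treat $\rho=1$. Indeed, $u_\rho$ is again a variational solution with Lipschitz constant $L$. It satisfies $0\in\partial\Omega_{u_\rho}$ and $\Theta(u_\rho,0)=\Theta(u,0)\ge W(b)$, and $(u_\rho)_s=u_{\rho s}$. So I would work with $v:=u_\rho$ on $B_1$ and set $\zeta(r,\cdot):=\widetilde\zeta(r/\rho,\cdot)$, where $\widetilde\zeta$ is built for $v$ on the annulus $A:=B_{15/16}\setminus\overline B_{9/16}$. The plan is a compactness argument by contradiction, in the spirit of the proof of \cref{lemma:u-zeta-sigma}.

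Assume the lemma fails for some $\widetilde\delta>0$. Then there are variational solutions $v^j$ with Lipschitz constant $L$ and $0\in\partial\Omega_{v^j}$, satisfying $\Theta(v^j,0)\ge W(b)$, $\|v^j-b\|_{L^2(\partial B_1)}\le 1/j$ and $|W(v^j)-W(b)|\le 1/j$, for which the graph representation with $C^{2,\alpha}$ bound $\widetilde\delta$ fails on $A$. By \cref{lemma:compactness-kw}, up to a subsequence, $v^j\to v^\infty$ strongly in $H^1(B_1)\cap C^{0,\alpha}(\overline B_1)$. The limit $v^\infty$ is a variational solution with $v^\infty=b$ on $\partial B_1$, so $v^\infty\not\equiv0$. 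I would then show $v^\infty\equiv b$ in $B_1$ via the monotonicity formula. Since $W(v^j_r)\ge\Theta(v^j,0)\ge W(b)$ for all $r$, and $W(v^j)\to W(b)$, \eqref{eq:monotonicity-formula} gives $\int_0^1\frac{2}{r}\int_{\partial B_1}(\nabla v^j_r\cdot x-v^j_r)^2\,dr\to0$. Passing to the limit by strong $H^1$ convergence, and Fatou on the left, $v^\infty$ is $1$-homogeneous in $B_1$, and its trace is $b$, so $v^\infty=b$. The nontriviality of the limit also gives, by \cref{lemma:compactness-kw}, Hausdorff convergence of $\partial\Omega_{v^j}$ to $\partial\Omega_b$ and $L^1_{\rm loc}$ convergence of the positivity sets.

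Next comes the regularity upgrade on a slightly larger annulus $A'\Supset A$ away from the origin. There every free boundary point of $b$ is regular, with density $\omega_d/2$. By upper semicontinuity of $\Theta$ (\cref{prop:kriventsov-weiss} (i)) and the Hausdorff convergence, $\Sigma^H_{v^j}\cap A'=\emptyset$ for $j$ large. This needs some care, since upper semicontinuity holds for a single function. I would argue as follows: a point of $\Sigma^H_{v^j}$ converging to $x\in\partial\Omega_b\cap A'$ gives density $\omega_d$ in the limit, using the uniform compactness of blow-ups and the monotonicity of $W$ along the sequence. Then \cref{prop:kriventsov-weiss} (iii) says $v^j$ is a viscosity solution in $A'$. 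Because $v^j\to b$ uniformly and the free boundaries converge in Hausdorff distance, at every point of $\partial\Omega_b\cap\overline A$ the functions $v^j$ are $\eps$-flat at a fixed scale. Covering $\overline A$ by finitely many such balls, De Silva's improvement of flatness \cite{DeSilva:FreeBdRegularityOnePhase} makes $\partial\Omega_{v^j}\cap A'$ a $C^{1,\beta}$ graph close to $\partial\Omega_b$. The hodograph/Kinderlehrer--Nirenberg argument \cite{KinderlehrerNirenberg1977:AnalyticFreeBd}, with uniform estimates, then upgrades this to $C^{2,\alpha}$ (indeed smooth) closeness to $\partial\Omega_b$ in $A$. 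Intersecting with the spheres $\partial B_r$, $r\in(9/16,15/16)$, and projecting radially to $\partial B_1$ gives a $C^{2,\alpha}$ function $\widetilde\zeta(r,\cdot)$ on $\partial D$. Here I use transversality of $\partial\Omega_b$ to the spheres, which holds by homogeneity, and the normal-graph structure via $\exp$. This function is jointly $C^{2,\alpha}$ in $(r,\theta)$ with norm tending to $0$, contradicting the choice of $v^j$.

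I expect the main obstacle to be the step excluding points of $\Sigma^H$ near $\partial\Omega_b$ uniformly in $j$, that is, verifying the hypotheses of \cref{prop:kriventsov-weiss} (iii) so that improvement of flatness applies. Without non-degeneracy, flatness of $v^j$ alone does not yield viscosity. I would handle this by a second contradiction argument. Take $x_j\in\Sigma^H_{v^j}$ with $x_j\to x$ and $\Theta(v^j,x_j)=\omega_d$. Then $W(v^j_{r,x_j})\ge\omega_d$ for all small $r$, so $W(b_{r,x})\ge\omega_d$ in the limit. But this quantity tends to $\omega_d/2$ as $r\to0$, since $x$ is a regular point of $b$, which is a contradiction.
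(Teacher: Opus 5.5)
Your proposal is correct and follows the paper's route. Like the paper, you rescale to $\rho=1$, argue by contradiction using the compactness of \cite{kriventsov-weiss}, identify the limit with $b$ through the monotonicity formula, and rule out points of $\Sigma^H$ in an annulus by comparing $W(v^j_{r,x_j})\ge\omega_d$ with $W(b_{r,x})$ near a regular point; you then conclude by viscosity, De Silva's improvement of flatness and the hodograph transform. The one minor difference is that you get homogeneity of the limit from the vanishing of the integrated radial term, which needs only strong $H^1$ convergence, whereas the paper passes to the limit in $W(u^j_r)$ and so also uses the $L^1_{\rm loc}$ convergence of the positivity sets.
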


\begin{proof}
    After a rescaling we can assume $\rho=1$. We suppose by contradiction that there exist a sequence of variational solutions $\{u^j\}\subset H^1(B_1)$, with $0\in\partial\Omega_{u^j}$, and numbers $\delta_j\to0^+$ such that \be\label{eq:contr-ass-smooth}\|u^j-b\|_{L^2(\partial B_1)}\le \delta_j,\quad |W(u^j)-W(b)|\le \delta_j,\quad \Theta(u^j,0)\ge W(b),\ee but the conclusion \eqref{eq:control-zeta} for $\rho=1$ does not hold. 
     
      By \cref{lemma:compactness-kw}, the sequence $\{u^j\}$ converges strongly in $H^1(B_{1})\cap C^{0,\alpha}(\overline B_1)$ to some variational solution $u^\infty$, with $u^\infty=b$ on $\partial B_1$. In particular, $u^\infty\not\equiv0$. Then, again by \cref{lemma:compactness-kw}, we have that $\Omega_{u^j}\to \Omega_{u^\infty}$, $\partial\Omega_{u^j}\to \partial\Omega_{u^\infty}$ locally in the Hausdorff distance in $B_1$, and $\chi_{\Omega_{u^j}}\to \chi_{\Omega_{u^\infty}}$ in $L^1_{\text{loc}}(B_1)$. In particular, \be\label{eq:conv-rr}W(u^j_r)\to W(u^\infty_r)\quad\text{for every }r\in(0,1).\ee By \eqref{eq:contr-ass-smooth} and the monotonicity formula \cref{prop:monotonicity-formula}, we have that $$W(u^j)\ge W(u^j_r)\ge W(b)\quad\text{for every }r\in(0,1).$$ Then, passing to the limit as $j\to+\infty$ and using that $W(u^j)\to W(b)$ and \eqref{eq:conv-rr}, we get 
      $$W(u^\infty_r)\equiv W(b)\quad\text{for every }r\in(0,1).$$ Thus, by \cref{prop:monotonicity-formula}, $u^\infty$ is $1$-homogeneous. Since $u^\infty=b$ on $\partial B_1$, we have that $u^\infty\equiv b$ in $B_1$. 

Now we claim that, for every $j$ sufficiently large, we have \be\label{eq:claim-j}\Sigma^H_{u^j}\cap (B_{31/32}\setminus \overline B_{1/2})=\emptyset,\ee where $\Sigma^H_{u^j}$ is defined in \eqref{eq:sigmaH}. Suppose by contradiction that there exists a sequence $\{x^j\}\subset \Sigma^H_{u^j} \cap (B_{31/32}\setminus \overline B_{1/2})$. Using the Hausdorff convergence of the free boundaries, we have $x^j\to x^\infty\in \partial\Omega_b\cap (\overline B_{31/32}\setminus B_{1/2})$ up to a subsequence. Moreover, since $b$ is smooth up to the free boundary outside the origin, then $x^\infty$ is a regular point. Therefore, $\Theta(b,x^\infty)=\omega_d/2$ and thus, by definition of the function $\Theta$, we have $$W(b_{r,x^\infty})<3\omega_d/4\quad\text{for some }r\in(0,1/100).$$ On the other hand, since $x^j\in \Sigma^H_{u^j}$, then $\Theta(u^j,x^j)=\omega_d$, and thus $$W(u^j_{r,x^j})\ge \omega_d,$$ by the monotonicity formula \cref{prop:monotonicity-formula}.
However, by the strong $H^1(B_1)$-convergence and uniform convergence of $u^j$ to $b$, the $L^1_{\text{loc}}(B_1)$-convergence of $\chi_{\Omega_{u^j}}$ to $\chi_{\Omega_{b}}$ and the convergence of $x^j$ to $x^\infty$, we get $W(u^j_{r,x^j}) \to W(b_{r,x^\infty})$ as $j\to+\infty$, a contradiction. 
This concludes the proof of \eqref{eq:claim-j}.

By \eqref{eq:claim-j} and \cref{prop:kriventsov-weiss} (iii), we obtain that $u^j$ is a viscosity solution in $B_{31/32}\setminus \overline B_{1/2}$, for every $j$ large enough. By combining the uniform convergence of $u^j$ to $b$ and the Hausdorff convergence of the positivity sets and the free boundaries, and using that $b$ is smooth up to the free boundary outside the origin, we can apply the improvement of flatness theorem in \cite{DeSilva:FreeBdRegularityOnePhase} together with the hodograph map argument (see \cite[Theorem 2]{KinderlehrerNirenberg1977:AnalyticFreeBd}) to the functions $u^j$, for $j$ large enough. Then, we deduce that $\partial\Omega_{u^j}\cap (B_{15/16}\setminus B_{9/16})$ is a $C^{2,\alpha}$ graph, with $C^{2,\alpha}$ norm bounded by a small constant, which is a contradiction. 
\end{proof}
\subsection{A reduced gradient estimate}
In order to prove \cref{prop:propfundamental}, in the next lemma we show a preliminary estimate of the reduced gradient. 

\begin{lemma}\label{proposition:corollary}
    Under the assumptions of \cref{prop:propfundamental}, for every $r\in I:=(9\rho/16,15\rho/16)$ there exists a graph $\zeta_r\in C^{2,\alpha}(\partial D)$, with uniformly small $C^{2,\alpha}$ norm, such that \bea \|\nabla_{{\rm red}}\mathcal{F}(\phi_r)\|^2\le C\|r\partial_r \phi_r\|^2_{L^2(\partial D_{\zeta_r})}+C\|r^2\partial_{rr}\phi_r\|^2_{L^{2}(D_{\zeta_r})}+C\|r\partial_r\phi_r\|^2_{L^{2}(D_{\zeta_r})},\eea for some constant $C>0$ depending only on $d$ and $b$.
\end{lemma}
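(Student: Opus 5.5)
The plan is to write the equations satisfied by $u$ in polar coordinates. Then I will project the resulting spherical equation onto the first mode and onto the higher modes of $D_{\zeta_r}$, and read off the three components of the reduced gradient from \eqref{eq:gradient-explicitly-zeta} and \eqref{eq:gradient-explicitly-other}.

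First, \cref{lemma:smooth-parametrization-lemma}, applied with a small $\widetilde\delta$, gives for $r\in I$ the graphs $\zeta_r:=\zeta(r,\cdot)$ with $\partial\Omega_{u_r}\cap\partial B_1=\partial D_{\zeta_r}$. The same compactness argument shows that $u$ is $C^{2,\alpha}$-close to $b$ up to the free boundary in the annulus $B_{15\rho/16}\setminus B_{9\rho/16}$, via improvement of flatness and the hodograph transform. Hence $\phi_r$ and its first and second derivatives in $(r,\theta)$ are uniformly bounded, and $\phi_r$ is $C^{1}$-close to $b$ on $\overline{D_{\zeta_r}}$. We expand $\phi_r=(\kappa_0^2+\sigma_r)^{1/2}\varphi_1^{D_{\zeta_r}}+\phi_{r,>}$ as in \cref{subsection-important}, and set $\kappa_r:=(\kappa_0^2+\sigma_r)^{1/2}$.

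\emph{Interior equation.} Writing $u=r\phi_r(\theta)$ and using $\Delta u=0$ in $\Omega_u$, a direct computation gives
$$(-\Delta_\theta-(d-1))\phi_r=f_r:=r^2\partial_{rr}\phi_r+(d+1)\,r\partial_r\phi_r\quad\text{in }D_{\zeta_r}.$$
Testing against $\varphi_1^{D_{\zeta_r}}$ and using $L^2$-orthogonality gives
$$\kappa_r\bigl(\lambda_1(\zeta_r)-(d-1)\bigr)=\langle f_r,\varphi_1^{D_{\zeta_r}}\rangle.$$
Since $\kappa_r\ge\kappa_0/2$, we get $|\partial_{\sigma,{\rm red}}\mathcal F(\phi_r)|\le C\|f_r\|_{L^2(D_{\zeta_r})}$. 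Projecting onto $E^{\zeta_r}_>$ gives $(-\Delta_\theta-(d-1))\phi_{r,>}=P_>f_r$. Hence $\|D_{\phi_>,{\rm red}}\mathcal F(\phi_r)\|_{H^{-1}}\le 2\|f_r\|_{L^2}$. Moreover, by the spectral gap \eqref{eq:gap-eigenvalue} and $H^2$ elliptic regularity on the uniformly $C^{2,\alpha}$ domain $D_{\zeta_r}$, we have $\|\phi_{r,>}\|_{H^2(D_{\zeta_r})}\le C\|f_r\|_{L^2}$. In particular $\|\partial_{\nu_{\zeta_r}}\phi_{r,>}\|_{L^2(\partial D_{\zeta_r})}\le C\|f_r\|_{L^2}$ by the trace theorem.

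\emph{Free boundary condition.} On $\partial\Omega_u$ we have $\phi_r=0$. Hence $\partial_r u=r\partial_r\phi_r$ there, as a trace from inside, and $\nabla_\theta\phi_r=\partial_{\nu_{\zeta_r}}\phi_r\,\nu_{\zeta_r}$. So $|\nabla u|^2=1$ becomes $(\partial_{\nu_{\zeta_r}}\phi_r)^2=1-(r\partial_r\phi_r)^2$ on $\partial D_{\zeta_r}$. Substituting the expansion, we obtain
$$1-\kappa_r^2(\partial_{\nu_{\zeta_r}}\varphi_1^{D_{\zeta_r}})^2=(r\partial_r\phi_r)^2+2\kappa_r\,\partial_{\nu_{\zeta_r}}\varphi_1^{D_{\zeta_r}}\,\partial_{\nu_{\zeta_r}}\phi_{r,>}+(\partial_{\nu_{\zeta_r}}\phi_{r,>})^2 .$$
By \eqref{eq:gradient-explicitly-zeta}, $\nabla_{\zeta,{\rm red}}\mathcal F(\phi_r)$ is this quantity composed with $\Psi_{\zeta_r}$ and multiplied by $a_{\zeta_r}J_{\zeta_r}$, which is uniformly bounded. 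The quadratic terms are bounded pointwise by a constant times the linear ones, thanks to the uniform $C^1$ bounds on $r\partial_r\phi_r$ and $\partial_\nu\phi_{r,>}$ from the first step. Therefore
$$\|\nabla_{\zeta,{\rm red}}\mathcal F(\phi_r)\|_{L^2(\partial D)}\le C\|r\partial_r\phi_r\|_{L^2(\partial D_{\zeta_r})}+C\|f_r\|_{L^2(D_{\zeta_r})}.$$
Summing the squares of the three components and expanding $\|f_r\|_{L^2}\le\|r^2\partial_{rr}\phi_r\|_{L^2}+(d+1)\|r\partial_r\phi_r\|_{L^2}$ yields the claim.

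The main obstacle is the boundary term. It requires an $L^2(\partial D_{\zeta_r})$ bound on $\partial_\nu\phi_{r,>}$ that is uniform in $r$, and this rests on $H^2$ estimates whose constants depend only on $d$ and $b$, which the uniform $C^{2,\alpha}$ smallness of $\zeta_r$ provides. It also requires controlling the quadratic terms with constants independent of $L$. For the latter I would use the $C^{2,\alpha}$-closeness of $u$ to $b$ in the annulus, obtained by choosing $\delta$ small in terms of $d$, $b$ and $L$, rather than the Lipschitz bound itself.
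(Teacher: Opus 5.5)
Your proposal is correct and follows essentially the same route as the paper: the polar form of the equations, the identity for $1-\kappa_r^2(\partial_{\nu_{\zeta_r}}\varphi_1^{D_{\zeta_r}})^2$, and trace plus $H^2$ elliptic estimates with the spectral gap to control $\partial_{\nu_{\zeta_r}}\phi_{r,>}$ on $\partial D_{\zeta_r}$. The paper packages the first two ingredients into \cref{lemma:gradient-explicitly-bounds}. One simplification: the pointwise bounds you need for the quadratic terms do not require $C^{2,\alpha}$-closeness of $u$ to $b$, because the free boundary condition already gives $|r\partial_r\phi_r|\le 1$ and $|\nabla_\theta\phi_r|\le 1$ on $\partial D_{\zeta_r}$, hence $|\partial_{\nu_{\zeta_r}}\phi_{r,>}|\le 1+C(d,b)$ (the paper instead uses a case split on whether $|\partial_{\nu_\zeta}\phi_>|\le 1$).
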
 
First we prove the following lemma.
 \begin{lemma}\label{lemma:gradient-explicitly-bounds}
    Let $\phi\in H^1(\partial B_1)$ be a non-negative trace sufficiently close to $b$ in $L^2(\partial B_1)$, and suppose that $\partial\Omega_\phi\cap \partial B_1$ is a small $C^{2,\alpha}$ graph over $\partial D$, namely \eqref{eq:graph-assumption} holds. We also suppose that $\phi\in C^1(\overline D_\zeta)$.  
    Then, we have the estimates \be\label{eq:lemma-preliminary1}\|\nabla_{\zeta,{\rm red}}\mathcal{F}(\phi)\|_{L^2(\partial D)}^2 \le C\|1-|\nabla_\theta \phi|^2\|^2_{L^2(\partial D_\zeta)}+C\|\partial_{\nu_\zeta}\phi_>\|^2_{L^2(\partial D_\zeta)},\ee and
\be\label{eq:lemma-preliminary2}|\partial_{\sigma,{\rm red}}\mathcal{F}(\phi)|^2+\|D_{\phi_>,{\rm red}}\mathcal{F}(\phi)\|_{H^{-1}(D_\zeta)}^2\le C\|\Delta_\theta \phi+(d-1)\phi\|^2_{H^{-1}(D_\zeta)},\ee for some constant $C>0$ depending only on $d$ and $b$.
 \end{lemma}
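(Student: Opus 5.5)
We work directly from the explicit formulas \eqref{eq:gradient-explicitly-zeta} and \eqref{eq:gradient-explicitly-other}, using the decomposition $\phi=\kappa\varphi_1^{D_\zeta}+\phi_>$ with $\kappa:=(\kappa_0^2+\sigma)^{1/2}$. We begin with \eqref{eq:lemma-preliminary1}. Since $\|\zeta\|_{C^{2,\alpha}}\le\delta_0$, the factors $a_\zeta$ and $J_\zeta$ are bounded uniformly (they are close to $1$). Changing variables $y=\Psi_\zeta(x)$ back to $\partial D_\zeta$ gives
\[
\|\nabla_{\zeta,{\rm red}}\mathcal{F}(\phi)\|_{L^2(\partial D)}^2\le C\big\|1-\kappa^2(\partial_{\nu_\zeta}\varphi_1^{D_\zeta})^2\big\|^2_{L^2(\partial D_\zeta)}.
\]
On $\partial D_\zeta$ the function $\phi$ vanishes and is $C^1$ up to the boundary, so $|\nabla_\theta\phi|=|\partial_{\nu_\zeta}\phi|$ there, with $\partial_{\nu_\zeta}\phi=\kappa\partial_{\nu_\zeta}\varphi_1^{D_\zeta}+\partial_{\nu_\zeta}\phi_>$. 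Writing $A:=\kappa\partial_{\nu_\zeta}\varphi_1^{D_\zeta}$ and $B:=\partial_{\nu_\zeta}\phi_>$, we have
\[
1-A^2=(1-|\nabla_\theta\phi|^2)+B(2A+B).
\]
The term $A$ is uniformly bounded in $L^\infty$, by elliptic regularity for $\varphi_1^{D_\zeta}$ on the $C^{2,\alpha}$ domain $D_\zeta$.

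The main obstacle is the factor $B$ in $B^2$, which is not a priori bounded in $L^\infty$. We would first handle it by splitting pointwise. Where $|B|\le 1$, we have $|B(2A+B)|\le C|B|$. Where $|B|>1$, we argue as follows. Since $|A|$ is close to $1$ (because $\kappa_0|\partial_\nu\varphi_1^D|=1$ and the data are small perturbations), we get
\[
|1-A^2|\le C\le C|B|.
\]
In both cases $|1-A^2|\le |1-|\nabla_\theta\phi|^2|+C|B|$, and squaring and integrating yields \eqref{eq:lemma-preliminary1}.

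For \eqref{eq:lemma-preliminary2}, set $f:=\Delta_\theta\phi+(d-1)\phi\in H^{-1}(D_\zeta)$. Since $\varphi_1^{D_\zeta}$ is an eigenfunction, we can expand
\[
f=-\kappa(\lambda_1(\zeta)-(d-1))\varphi_1^{D_\zeta}-\sum_{j\ge2}(\lambda_j(\zeta)-(d-1))c_j\varphi_j^{D_\zeta}.
\]
The $H^{-1}(D_\zeta)$ norm is diagonal in the eigenbasis, with weights $(1+\lambda_j)^{-1}$, exactly as in \eqref{eq:Qzeta-eq0}. Hence
\[
\|f\|_{H^{-1}}^2=\frac{\kappa^2(\lambda_1-(d-1))^2}{1+\lambda_1}+\sum_{j\ge2}\frac{(\lambda_j-(d-1))^2}{1+\lambda_j}c_j^2.
\]
The first term controls $|\partial_{\sigma,{\rm red}}\mathcal F(\phi)|^2=(\lambda_1(\zeta)-(d-1))^2$, because $\kappa^2\ge\kappa_0^2/2$ and $\lambda_1\le d$. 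The second term equals $\tfrac14\|D_{\phi_>,{\rm red}}\mathcal F(\phi)\|^2_{H^{-1}}$ by \eqref{eq:Qzeta-eq0}. Summing these two bounds gives \eqref{eq:lemma-preliminary2}.
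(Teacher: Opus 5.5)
Your proof is correct and follows the paper's argument essentially line by line: the same reduction via the explicit formula for $\nabla_{\zeta,{\rm red}}\mathcal F(\phi)$, the same identity $1-A^2=(1-|\nabla_\theta\phi|^2)+B(2A+B)$ with the case split $|B|\le 1$ versus $|B|>1$, and the same diagonal computation of the $H^{-1}(D_\zeta)$ norm in the eigenbasis of $D_\zeta$. In the case $|B|>1$ you only need $|A|\le C$, which is what the paper uses, not that $|A|$ is close to $1$.
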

  \begin{proof}
    First we prove \eqref{eq:lemma-preliminary1}. We observe that, by \eqref{eq:gradient-explicitly-zeta} \be\label{eq:combined-preliminary}\|\nabla_{\zeta,{\rm red}}\mathcal{F}(\phi)\|_{L^2(\partial D)}^2 \le C\|1-\kappa^2(\partial_{\nu_\zeta} \varphi_1^{D_\zeta})^2\|^2_{L^2(\partial D_\zeta)}.\ee Moreover, since $\phi=\kappa\varphi_1^{D_\zeta}+\phi_>$, where $\kappa:=(\kappa_0^2+\sigma)^{1/2}$, we have that \bea 1-\kappa^2(\partial_{\nu_\zeta}\varphi_1^{D_\zeta})^2&=1-(\partial_{\nu_\zeta} \phi-\partial_{\nu_\zeta} \phi_>)^2=1-|\nabla_\theta \phi|^2+2\partial_{\nu_\zeta}\phi\partial_{\nu_\zeta}\phi_>-(\partial_{\nu_\zeta}\phi_>)^2\\&=1-|\nabla_\theta \phi|^2+2\kappa\partial_{\nu_\zeta}\varphi_1^{D_\zeta}\partial_{\nu_\zeta}\phi_>+(\partial_{\nu_\zeta}\phi_>)^2,\eea where the quantities above are well-defined on $\partial D_\zeta$ since $\phi\in C^1(\overline D_\zeta)$.
    We also observe that, since $\zeta$ is close to $0$ in $C^{2,\alpha}$, we have $|\kappa\partial_{\nu_\zeta}\varphi_1^{D_\zeta}|\le C$. In order to conclude the proof of \eqref{eq:lemma-preliminary1}, we distinguish two cases. If $|\partial_{\nu_\zeta}\phi_>|\le 1$, then $$(1-\kappa^2(\partial_{\nu_\zeta}\varphi_1^{D_\zeta})^2)^2\le C(1-|\nabla_\theta \phi|^2)^2+C(\partial_{\nu_\zeta}\phi_>)^2.$$ Otherwise, if $|\partial_{\nu_\zeta}\phi_>|> 1$, then $$(1-\kappa^2(\partial_{\nu_\zeta}\varphi_1^{D_\zeta})^2)^2\le C\le C(\partial_{\nu_\zeta}\phi_>)^2\le C(1-|\nabla_\theta \phi|^2)^2+C(\partial_{\nu_\zeta}\phi_>)^2.$$ 
    Combining the above two estimates with \eqref{eq:combined-preliminary}, we conclude \eqref{eq:lemma-preliminary1}.

   Next we prove \eqref{eq:lemma-preliminary2}.
   Since $\sigma$ is small and $\kappa_0$ is away from $0$, then by \eqref{eq:gradient-explicitly-other}, we have \be\label{eq:one-estimate}|\partial_{\sigma,{\rm red}}\mathcal{F}(\phi)|^2= (\lambda_1(\zeta)-(d-1))^2\le C\kappa^2(\lambda_1(\zeta)-(d-1))^2.\ee For $f\in H^{-1}(D_\zeta)$ with expansion $f=\sum_{j=1}^\infty a_j\varphi_j^{D_\zeta}\in H^{-1}(D_\zeta)$, with $\{a_j\}\subset\R$, we have \bea\label{eq:dopouso}\|f\|^2_{H^{-1}(D_\zeta)}=\sup_{\eta\in H^1_0(D_\zeta)\setminus\{0\}}\frac{|\langle f,\eta\rangle |^2}{\|\eta\|^2_{H^1_0(D_\zeta)}}=\sum_{j=1}^\infty \frac{a_j^2}{1+\lambda_j(\zeta)},\eea where the supremum above is obtained for $\eta:=\sum_{j=1}^\infty\frac{a_j}{1+\lambda_j(\zeta)}\varphi_j^{D_\zeta}.$ Applying this identity to 
    $$-\Delta_\theta \phi-(d-1)\phi=\kappa(\lambda_1(\zeta)-(d-1))\varphi_1^{D_\zeta}+\sum_{j=2}^\infty(\lambda_j(\zeta)-(d-1))c_j\varphi_j^{D_\zeta},$$ 
    and using \eqref{eq:gradient-explicitly-other} and \eqref{eq:one-estimate}, we obtain 
    \bea \|\Delta_\theta \phi+(d-1)\phi\|^2_{H^{-1}(D_\zeta)}&=\frac{\kappa^2(\lambda_1(\zeta)-(d-1))^2}{1+\lambda_1(\zeta)}+\sum_{j=2}^\infty\frac{(\lambda_j(\zeta)-(d-1))^2}{1+\lambda_j(\zeta)}c_j^2\\&=\frac{\kappa^2(\lambda_1(\zeta)-(d-1))^2}{1+\lambda_1(\zeta)}+\|\Delta_\theta\phi_>+(d-1)\phi_>\|^2_{H^{-1}(D_\zeta)}\\&\ge C^{-1}|\partial_{\sigma,{\rm red}}\mathcal{F}(\phi)|^2+4^{-1}\|D_{\phi_>,{\rm red}} \mathcal{F}(\phi)\|^2_{H^{-1}(D_\zeta)}.
    \eea 
    The last estimate concludes the proof of \eqref{eq:lemma-preliminary2}.
\end{proof}
\begin{proof}[Proof of \cref{proposition:corollary}]
    By \cref{lemma:smooth-parametrization-lemma}, $u$ is a classical solution of the one-phase problem in $A_\rho:=B_{15\rho/16}\setminus B_{9\rho/16}$, namely $u\in C^{2,\alpha}(\overline\Omega_u\cap A_\rho)$. Then, $u$ solves $\Delta u=0$ in $\Omega_u\cap A_\rho$ and $|\nabla u|^2=1$ on $\partial\Omega_u\cap A_\rho$. Therefore we have \be\label{eq:equation}\partial_{rr}u+\frac{d-1}r\partial_ru+\frac{1}{r^2}\Delta_\theta u=0\quad\text{in } \Omega_u\cap A_\rho,\quad |\partial_ru|^2+\frac{1}{r^2}|\nabla_\theta u|^2=1\quad\text{on }\partial\Omega_u\cap A_\rho.\ee
    We write $u(r,\theta)=r\phi_r(\theta)$ and we decompose $\phi_r$ according to the notation in \cref{subsection-important}. More precisely, for $\zeta_r:=\zeta(r,\cdot)$, we write $\phi_r=(\zeta_r,\sigma_r,(\phi_r)_>)$ for every $r\in I:=(9\rho/16,15\rho/16)$, namely $$\phi_r=(\kappa_0^2+\sigma_r)^{1/2}\varphi_1^{D_{\zeta_r}}+(\phi_r)_>.$$ Then, since $\phi_r=0$ on $\partial D_{\zeta_r}$, \eqref{eq:equation} becomes for every $r\in I$
    \be\label{eq:computationabove} r\partial_{rr} \phi_r+(d+1)\partial_r\phi_r+\frac{1}{r}(\Delta_\theta\phi_r+(d-1)\phi_r)=0\quad\text{in } D_{\zeta_r},\quad r^2|\partial_r\phi_r|^2+|\nabla_\theta \phi_r|^2=1\quad\text{on } \partial D_{\zeta_r}.\ee 
By \cref{lemma:smooth-parametrization-lemma} and \eqref{eq:inequality-with-log}, after choosing $\delta>0$ sufficiently small, both $\|\zeta_r\|_{C^{2,\alpha}(\partial D)}$ and $\|\phi_r-b\|_{L^2(\partial B_1)}$ are sufficiently small, for every $r\in I$.
Therefore, we can apply \cref{lemma:gradient-explicitly-bounds} to deduce
\be\label{eq:dausaredopo}\begin{aligned} \|\nabla_{{\rm red}}\mathcal{F}(\phi_r)\|^2&\le  C\|1-|\nabla_\theta \phi_r|^2\|^2_{L^2(\partial D_{\zeta_r})}+C\|\partial_{\nu_{\zeta_r}}(\phi_r)_>\|^2_{L^2(\partial D_{\zeta_r})}\\&\qquad+C\|\Delta_\theta \phi_r+(d-1)\phi_r\|^2_{H^{-1}(D_{\zeta_r})},
\end{aligned}\ee for some $C=C(d,b)>0$.

We also observe that, since $D_{\zeta_r}$ are uniformly $C^{2,\alpha}$, for every $r\in I$, we have the trace inequality \bea \|\partial_{\nu_{\zeta_r}}(\phi_r)_>\|_{L^2(\partial D_{\zeta_r})}&\le C\|(\phi_r)_>\|_{H^2( D_{\zeta_r})}\\&\le C\|\Delta_\theta (\phi_r)_>+(d-1)(\phi_r)_>\|_{L^2( D_{\zeta_r})}+C\|(\phi_r)_>\|_{L^2( D_{\zeta_r})},\eea where in the last inequality we used elliptic estimates on uniformly $C^{2,\alpha}$ domains.
Using the gap \eqref{eq:gap-eigenvalue}, we also deduce that \bea \|(\phi_r)_>\|_{L^2( D_{\zeta_r})}\le C\|\Delta_\theta (\phi_r)_>+(d-1)(\phi_r)_>\|_{L^2( D_{\zeta_r})}.\eea 
Moreover, since $(\phi_r)_>$ is the projection of $\phi_r$ on the higher modes, and the projection on the higher modes commutes with $\Delta_\theta+(d-1)$, we have \bea \|\Delta_\theta (\phi_r)_>+(d-1)(\phi_r)_>\|_{L^2( D_{\zeta_r})}\le C\|\Delta_\theta \phi_r+(d-1)\phi_r\|_{L^2( D_{\zeta_r})}.\eea 
Combining the previous three estimates, we obtain that, 
$$\|\partial_{\nu_{\zeta_r}}(\phi_r)_>\|_{L^2(\partial D_{\zeta_r})}\le C\|\Delta_\theta \phi_r+(d-1)\phi_r\|_{L^2( D_{\zeta_r})}.$$ 
Therefore, by \eqref{eq:dausaredopo} and \eqref{eq:computationabove}, we obtain, for every $r\in I$,
\bea
\|\nabla_{{\rm red}}\mathcal{F}(\phi_r)\|^2&\le C\|1-|\nabla_\theta \phi_r|^2\|^2_{L^2(\partial D_{\zeta_r})}+C\|\Delta_\theta \phi_r+(d-1)\phi_r\|^2_{L^{2}(D_{\zeta_r})}
\\&\le C\|r\partial_r \phi_r\|^2_{L^2(\partial D_{\zeta_r})}+C\|r^2\partial_{rr}\phi_r\|^2_{L^{2}(D_{\zeta_r})}+C\|r\partial_r\phi_r\|^2_{L^{2}(D_{\zeta_r})},\eea where we used that $|r\partial_r \phi_r|\le1$ on $\partial D_{\zeta_r}$ by \eqref{eq:computationabove}, concluding the proof.
\end{proof}
\subsection{A Caccioppoli-type inequality} We show the following Caccioppoli-type inequality.
\begin{lemma}\label{lemma:estimateintime}
    Under the assumptions of \cref{prop:propfundamental}, let $q(t,\theta):=r\partial_r\phi_r(\theta)$ for $t:=\log r$. We also set $$\mathcal{I}':=(\log(5\rho/8),\log(7\rho/8))\subset \mathcal{I}:=(\log(9\rho/16),\log(15\rho/16)).$$ Then, we have $$\int_{\mathcal{I}'}\int_{D_t}(|\partial_t q|^2+|\nabla_\theta q|^2)\,d\HH^{d-1}\,dt\le C\int_{\mathcal{I}}\int_{D_t} q^2\,d\HH^{d-1}\,dt,$$ for some $C=C(d,b)>0$.
    \end{lemma}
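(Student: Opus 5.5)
The plan is to pass from $q$ back to the Euclidean function $w(x):=\nabla u(x)\cdot x-u(x)$, prove a standard Caccioppoli inequality for $w$ in an annulus, and then rewrite it in the variables $(t,\theta)$. After rescaling we assume $\rho=1$. Since $u(r,\theta)=r\phi_r(\theta)$, we have $w(r\theta)=r\partial_r u-u=r^2\partial_r\phi_r=r\,q(\log r,\theta)$. On the annulus $A:=B_{15/16}\setminus \overline B_{9/16}$ we have $r\simeq 1$ and $dx=r^{d}\,dt\,d\HH^{d-1}$. The pointwise identities $|\partial_t q|^2+|\nabla_\theta q|^2=r^2|\nabla(w/r)|^2\le C(|\nabla w|^2+w^2)$ and $q^2\simeq w^2$ then show that the statement follows from
\[
\int_{\Omega_u\cap A'}|\nabla w|^2\,dx\le C\int_{\Omega_u\cap A}w^2\,dx,\qquad A':=B_{7/8}\setminus \overline B_{5/8}.
\]
The term $w^2$ on the left is trivially bounded by the right-hand side. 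Here $D_t$ is exactly the slice $\Omega_u\cap\partial B_1$ rescaled, i.e. $D_{\zeta_r}$.

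\emph{Step 1: the equation for $w$.} By \cref{lemma:smooth-parametrization-lemma}, $u$ is a classical solution in $A$, with $C^{2,\alpha}$ free boundary that is $\widetilde\delta$-close to $\partial\Omega_b$. The function $w$ is harmonic in $\Omega_u\cap A$, since $x\cdot\nabla u$ is harmonic whenever $u$ is. On $\partial\Omega_u$ we have $u=0$ and $\nabla u=-\nu$, hence $w=-x\cdot\nu$ and $\nabla w=D^2u\,x$. Split $x=(x\cdot\nu)\nu+x_T$. The tangential part gives $\nu\cdot D^2u\,x_T=\tfrac12\partial_{x_T}|\nabla u|^2=0$, because $|\nabla u|^2\equiv1$ on the free boundary. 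For the normal part, $\Delta u=0$ together with $u=0$ on the boundary gives $\partial_{\nu\nu}u=\pm H_u$, the mean curvature of $\partial\Omega_u$. Therefore $\partial_\nu w=\pm H_u\,w$ on $\partial\Omega_u\cap A$. This is the Jacobi-field condition \eqref{eq:linearizzato}, now relative to $\partial\Omega_u$. The sign is irrelevant for what follows; we only need $\|H_u\|_{L^\infty(A)}\le C(d,b)$, which holds by the uniform $C^{2,\alpha}$ bound.

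\emph{Step 2: energy estimate.} Take a radial cutoff $\eta\in C^\infty_c(A)$ with $\eta\equiv1$ on $A'$. Testing $\Delta w=0$ with $\eta^2 w$ and integrating by parts on $\Omega_u\cap A$ (legitimate since $w\in C^{1,\alpha}$ up to the boundary) gives
\[
\int\eta^2|\nabla w|^2=-2\int \eta w\nabla\eta\cdot\nabla w+\int_{\partial\Omega_u}\eta^2\,\partial_\nu w\, w\le \tfrac12\int\eta^2|\nabla w|^2+C\int w^2|\nabla\eta|^2+C\int_{\partial\Omega_u}\eta^2 w^2 .
\]
The main point is the boundary term. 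I would control it with a trace inequality with uniform constants,
\[
\int_{\partial\Omega_u\cap A}(\eta w)^2\le \eps\int_{\Omega_u\cap A}|\nabla(\eta w)|^2+C_\eps\int_{\Omega_u\cap A}(\eta w)^2 .
\]
This inequality holds uniformly because the domains $\Omega_u\cap A$ are uniformly Lipschitz, being small $C^{2,\alpha}$ perturbations of $\Omega_b\cap A$ by \cref{lemma:smooth-parametrization-lemma}. Choosing $\eps$ small and absorbing gives the Caccioppoli inequality, with $C$ depending only on $d$ and $b$.

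The obstacle I expect is exactly this uniformity. The trace constant and the bound on $H_u$ must not depend on $u$, and both follow from the smallness of the $C^{2,\alpha}$ norm of $\zeta$ provided by \cref{lemma:smooth-parametrization-lemma}. A secondary point is justifying the integration by parts up to the free boundary, which uses the $C^{2,\alpha}$ regularity of $u$ on $\overline\Omega_u\cap A$ from the same lemma. Finally, we change variables back to $(t,\theta)$ as described in the first paragraph, and undo the rescaling in $\rho$.
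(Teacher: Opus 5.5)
Your proof is correct and follows essentially the paper's argument: the boundary identity $\partial_\nu h=Hh$ for $h=\nabla u\cdot x-u$, a Caccioppoli test against $\eta^2$ times the function, and absorption of the boundary term using the uniform bound on $H$ and a uniform $\varepsilon$-trace inequality. The only difference is that you run the energy estimate for $w$ in Euclidean variables and pass to $(t,\theta)$ at the end via $|\partial_t q|^2+|\nabla_\theta q|^2=r^2|\nabla(w/r)|^2$, whereas the paper first transfers the equation and the oblique condition $\partial_{\overline\nu}q=(e^tH-\overline\nu_t)q$ to the space-time domain; your version avoids that computation and is slightly lighter, but the two are equivalent.
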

\begin{proof}    
We define $\Phi(t,\theta):=\phi_r(\theta)=\phi_{e^t}(\theta)$, where $t:=\log r$. 
Then, by \eqref{eq:computationabove}, for every $t\in \mathcal{I}$, we have
\be\label{eq:computationabove-after} \partial_{tt}\Phi +d\partial_t\Phi+\Delta_\theta \Phi+(d-1)\Phi=0\quad\text{in } D_{t},\ee where $D_t:=D_{\zeta(e^t,\cdot)}$.

Now we set $q(t,\theta):=\partial_t\Phi(t,\theta)=r\partial_r\phi_r(\theta).$
Differentiating \eqref{eq:computationabove-after} with respect to $t$, we get \be\label{eq:computationabove-afterafter} \partial_{tt}q +d \partial_{t}q+\Delta_\theta q+(d-1)q=0\quad\text{in } \mathcal{D}:=\{(t,\theta):\ t\in \mathcal{I}, \ \theta\in D_t\}.
\ee
Now we claim that $q$ satisfies the free boundary condition
\be\label{eq:computationabove-afterafter2}\partial_{\overline\nu} q=\beta q\quad\text{on }\Sigma:=\{(t,\theta):\ t\in \mathcal{I}, \ \theta\in \partial D_t\},\quad \text{where}\quad \beta:=e^tH-\overline \nu_t,\ee where $\overline \nu=(\overline \nu_t,\overline \nu_\theta)\in\R\times T_\theta\mathbb{S}^{d-1}$ is the outward unit normal to $\Sigma$.

In order to prove \eqref{eq:computationabove-afterafter2}, we first observe that in the annulus $A_\rho:=B_{15\rho/16}\setminus B_{9\rho/16}$, the function $h:=\nabla u\cdot x-u$ solves \be\label{eq:h-solves}\partial_\nu h=Hh\quad\text{on }\partial\Omega_u\cap A_\rho,\ee where $\nu$ is the outward unit normal to $\partial \Omega_u$ and $H:=-\partial_{\nu\nu}u$ is the mean curvature of the free boundary $\partial\Omega_u$ in $A_\rho$. Indeed, as already seen in the proof of \cref{proposition:corollary}, $u$ is a classical solution in $A_\rho$. Differentiating $|\nabla u|=1$ on $\partial\Omega_u \cap A_\rho$ in the tangential directions, we get $D^2u\, \tau\cdot\nu=0$ for every tangential direction $\tau$. Then, $D^2u\,x\cdot \nu=(x\cdot\nu)\partial_{\nu\nu}u=-(x\cdot\nu)H$, and thus $$\partial_{\nu}h=\partial_{\nu}(\nabla u\cdot x)-\partial_{\nu }u=D^2u \,x\cdot \nu=-(x\cdot\nu)H.$$ We also observe that $h=\nabla u\cdot x=-(x\cdot\nu)$ on $\partial\Omega_u\cap A_\rho$.
This concludes the proof of \eqref{eq:h-solves}.

Next we proceed with the proof of the claim \eqref{eq:computationabove-afterafter2}. Since $q(t,\theta)=e^{-t}h(e^t\theta)$, if we write $\overline \nu=(\overline \nu_t,\overline \nu_\theta)$, so that $\partial_{\overline \nu}t=\overline \nu_t$ and $\partial_{\overline \nu}\theta=\overline \nu_\theta$, then \bea \partial_{\overline \nu}q=-e^{-t}h(e^t\theta)\partial_{\overline \nu}t+e^{-t}\nabla h(e^t\theta)\cdot \partial_{\overline \nu}(e^t\theta)=-q\overline \nu_t+\nabla h(e^t\theta)\cdot(\theta\overline\nu_t+\overline \nu_\theta)\eea
One can directly check that the vector $\theta\overline\nu_t+\overline \nu_\theta$ is orthogonal to every tangent vector to the free boundary, and has unit norm. In particular, $\nu=\pm(\theta\overline\nu_t+\overline \nu_\theta)$. Additionally, since $\nu$ and $\overline \nu$ are both chosen as outward unit normals to the corresponding domains, the sign is positive, and we get
$\nu=\theta\overline\nu_t+\overline \nu_\theta.$ Therefore, by \eqref{eq:h-solves}, we obtain
$$\partial_{\overline \nu}q=-q\overline \nu_t+\partial_\nu h=-q\overline \nu_t+Hh. $$ Since $h=e^tq$, we conclude the proof of the claim \eqref{eq:computationabove-afterafter2}.

We next claim that $\beta$ satisfies \be\label{eq:boundbeta}\|\beta\|_{L^\infty(\Sigma)}\le C,\ee for some $C=C(d,b)>0$. Indeed, we first recall that, by \eqref{eq:control-zeta}, after the rescaling $x=\rho y$, the free boundary has uniformly bounded $C^{2,\alpha}$ norm in the fixed annulus $A_1:=B_{15/16}\setminus B_{9/16}$. Thus, the mean curvature is bounded in $A_1$. Rescaling back, we obtain 
$\|H\|_{L^\infty (\partial\Omega_u\cap A_\rho)}\le C\rho^{-1}.$
Then, since $r$ and $\rho$ are comparable in $A_\rho$, we have $\|rH\|_{L^\infty(\partial\Omega_u\cap A_\rho)}\le C$. 
Since $r=e^t$, we deduce that $$\|e^tH\|_{L^\infty(\Sigma)}\le C.$$ Combining this estimate with the trivial bound $|\overline\nu_t|\le 1$, we conclude the proof of \eqref{eq:boundbeta}.

We now take an arbitrary $\eta=\eta(t)\in C_c^\infty(\mathcal{I})$. We test the weak formulation of \eqref{eq:computationabove-afterafter} with $\eta^2q$, obtaining the following Caccioppoli-type inequality
\be\label{eq:eq1} \int_{\mathcal{D}}\eta^2 (|\partial_t q|^2+|\nabla_\theta q|^2)\,d\HH^{d-1}\,dt\le C\int_{\mathcal{D}} (\eta^2+(\partial_t \eta)^2)q^2\,d\HH^{d-1}\,dt+C\left|\int_{\Sigma}\eta^2q\partial_{\overline\nu} q\,d\HH^{d-1}_\Sigma\right|,\ee for some $C=C(d,b)>0$.
By \eqref{eq:computationabove-afterafter2} and \eqref{eq:boundbeta}, we have that 
\bea \left|\int_{\Sigma}\eta^2q\partial_{\overline\nu} q\,d\HH^{d-1}_\Sigma\right|&\le C\left|\int_{\Sigma}\eta^2 q^2\,d\HH^{d-1}_\Sigma\right|\le C\left|\int_{\mathcal{I}}\int_{\partial D_t}\eta^2 q^2\,d\HH^{d-2}\,dt\right|,\eea where we used that since $\Sigma$ is a $C^{2,\alpha}$ graph on $\mathcal{I}\times \partial D$ (see \eqref{eq:control-zeta}), we have that $d\HH^{d-1}_\Sigma$ is comparable with $d\HH^{d-2}\,dt$.
Then, by a trace inequality on the domains $D_t$, for every $\tau>0$ there exists $C_\tau>0$ such that
\be\label{eq:eq2} \left|\int_{\Sigma}\eta^2q\partial_{\overline\nu} q\,d\HH^{d-1}_\Sigma\right|\le \tau \int_{\mathcal{I}}\int_{D_t}\eta^2|\nabla_\theta q|^2\,d\HH^{d-1}\,dt+C_\tau\int_{\mathcal{I}}\int_{D_t}\eta^2q^2\,d\HH^{d-1}\,dt,\ee
where $C_\tau$ is independent of $t$ since the domains $D_t$ are given by uniformly $C^{2,\alpha}$ graphs over $D$, for every $t\in \mathcal{I}$.
Combining \eqref{eq:eq1} and \eqref{eq:eq2} and choosing $\tau$ small enough, we get
\bea \int_{\mathcal{D}}\eta^2 (|\partial_t q|^2+|\nabla_\theta q|^2)\,d\HH^{d-1}\,dt&\le C\int_{\mathcal{D}} (\eta^2+(\partial_t \eta)^2)q^2\,d\HH^{d-1}\,dt,
\eea
for some $C=C(d,b)>0$.

Now we choose a function $\eta(t):=\widetilde \eta(t-\log\rho)$ where $\widetilde \eta\in C^\infty_c((\log (9/16),\log(15/16)))$ and $\widetilde \eta\equiv1$ in $(\log (5/8),\log (7/8))$. Then $\eta\in C^\infty_c(\mathcal{I})$ and $\eta\equiv1$ in $\mathcal{I}'$. Therefore, the previous inequality concludes the proof. 
\end{proof}
\begin{proof}[Proof of \cref{prop:propfundamental}]
By \cref{proposition:corollary}, and setting $q(t,\theta):=r\partial_r\phi_r(\theta)$ with $t=\log r$, we have for every $t\in \mathcal{I}:=(\log(9\rho/16),\log(15\rho/16))$ \be\label{eq:succ}\|\nabla_{{\rm red}}\mathcal{F}(\phi_{e^t})\|^2\le C\|q(t,\cdot)\|_{L^2(\partial D_t)}^2+C\|\partial_t q(t,\cdot)\|_{L^2( D_t)}^2+C\|q(t,\cdot)\|_{L^2( D_t)}^2,\ee where $D_t:=D_{\zeta(e^t,\cdot)}$.
We notice that the boundary term is controlled by the uniform trace inequality 
$$\|q(t,\cdot)\|_{L^2(\partial D_t)}^2\le C\left(
\|q(t,\cdot)\|_{L^2(D_t)}^2+\|\nabla_\theta q(t,\cdot)\|_{L^2(D_t)}^2\right),$$
where the constant $C$ is independent of $t$, since the domains $D_t$ are given by uniformly $C^{2,\alpha}$ graphs over $D$, for every $t\in \mathcal{I}$.

Integrating \eqref{eq:succ} in $t\in \mathcal{I}':=(\log(5\rho/8),\log(7\rho/8))$ and using the Caccioppoli inequality in \cref{lemma:estimateintime}, we get $$\int_{\mathcal{I}'}\|\nabla_{{\rm red}}\mathcal{F}(\phi_{e^t})\|^2\,dt\le C\int_{\mathcal{I}}\int_{D_t}q^2\,d\HH^{d-1}\,dt \le C\int_{\mathcal{I}}\int_{\partial B_1}q^2\,d\HH^{d-1}\,dt .$$ 
This implies the desired estimate, after the change of variables $t=\log r$, since $q(t,\theta)=\nabla u_r\cdot x-u_r$ on $\partial B_1$.
\end{proof}

\section{Proof of the main results}\label{sec6}
In this section we prove the epiperimetric inequality for variational solutions of the one-phase problem in \cref{thm:epiperimetric-inequality}, and we apply it at every scale to deduce our uniqueness result \cref{thm:uniqueness}. 

The epiperimetric inequality is a consequence of the \L ojasiewicz inequality in \cref{thm:loj-ineq} and the radial control of the reduced gradient in \cref{prop:propfundamental}.
\begin{proof}[Proof of \cref{thm:epiperimetric-inequality}]
    We claim that $\phi_r:=u_r|_{\partial B_1}$ satisfies the hypotheses of the \L ojasiewicz inequality in \cref{thm:loj-ineq} for every $r\in I':=(5\rho/8,7\rho/8)$. 
    
    First, we observe that $W(u_r)\ge \Theta(u,0)\ge W(b)$ by the monotonicity formula \cref{prop:monotonicity-formula}. Moreover, by the equipartition of the energy in \eqref{eq:equipartition-energy}, and using that $W(z_r)=\frac1d\mathcal{F}(\phi_r)$ and $W(b)=\frac1d\mathcal{F}(b)$, we obtain \be\label{eq:eqeq2}\frac1d(\mathcal{F}(\phi_r)-\mathcal{F}(b))=W(u_r)-W(b)+\frac1d\int_{\partial B_1}(\nabla u_r\cdot x-u_r)^2\,d\HH^{d-1}.\ee 
    
    Let $\delta_0$ be the constant in \cref{thm:loj-ineq}. By \cref{lemma:smooth-parametrization-lemma}, for every $r\in I'$, $\partial\Omega_{u_r}\cap \partial B_1$ is given by the graph of a $C^{2,\alpha}$ function with $C^{2,\alpha}$ norm uniformly bounded by $\delta_0$. Moreover, by the $L$-Lipschitz regularity of $u$ in \eqref{eq:lipschitz-regularity} and since $0\in\partial\Omega_u$, the $C^{0,1}$-norm of $u$ is controlled by a constant depending only on $d$ and $L$. Then, by \eqref{eq:eqeq2}, for every $r\in I'$, we deduce the bound $$|\mathcal{F}(\phi_r)-\mathcal{F}(b)|\le C,$$ for some $C=C(d,L)>0$. Additionally, by \eqref{eq:inequality-with-log}, we have, for every $r\in I'$, $$\|u_r-u_\rho\|_{L^2(\partial B_1)}\le C (W(u_\rho)-W(u_r))^{1/2}\le C(W(u_\rho)-W(b))^{1/2}\le C\delta^{1/2}.$$  Therefore, for every $r\in I'$,
    $$\|\phi_r-b\|_{L^2(\partial B_1)}\le \|u_\rho-b\|_{L^2(\partial B_1)}+\|u_r-u_\rho\|_{L^2(\partial B_1)}\le \delta+C\delta^{1/2}.$$ Then, we choose $\delta$ small enough such that $\delta+C\delta^{1/2}\le \delta_0$. Therefore, the hypotheses of \cref{thm:loj-ineq} are satisfied. This concludes the proof of the claim. 

\medskip
    
    Once the claim is proves, we set $$\overline \eps:=\eps E(\rho/2)^{1-2\theta},\quad\text{where}\quad E(\rho):= W(u_\rho)-W(b),$$ $\theta\in(0,\frac12]$ is the constant in \cref{thm:loj-ineq} and $\eps>0$ to be chosen later. Notice that, by the $L$-Lipschitz regularity of $u$ in \eqref{eq:lipschitz-regularity}, $E(\rho/2)$ is bounded by a constant depending only on $d$, $b$ and $L$. Therefore, we can choose $\eps$ sufficiently small so that $\overline \eps\le 1/2$. 
    
    Since $\rho/2\le r$ for every $r\in I'$, then the monotonicity formula \cref{prop:monotonicity-formula} and \eqref{eq:eqeq2} give, for every $r\in I'$ \bea\label{eq:one-identity}0\le E(\rho/2)\le E(r)\le \frac1d(\mathcal{F}(\phi_r)-\mathcal{F}(b)).\eea
    Then, by the \L ojasiewicz inequality in \cref{thm:loj-ineq}, we have, for every $r\in I'$,  $$0\le \overline\eps E(\rho/2)\le C\eps|\mathcal{F}(\phi_r)-\mathcal{F}(b)|^{2-2\theta}\le C\eps\|\nabla_{{\rm red}}\mathcal{F}(\phi_r)\|^2,$$ for some $C=C(d,b,L)>0$. 
    
    Dividing by $r$ the previous inequality and integrating, we obtain by \cref{prop:propfundamental} $$\overline\eps E(\rho/2)\le C\eps\int_{I'}\frac1r\|\nabla_{{\rm red}}\mathcal{F}(\phi_r)\|^2\,dr\le C\eps\int_{\rho/2}^{\rho}\frac1r\int_{\partial B_1}(\nabla u_r\cdot x-u_r)^2\,d\HH^{d-1}\,dr.$$
Integrating the Weiss' monotonicity formula in \eqref{eq:monotonicity-formula},
we also have
    \bea\label{eq:eqeq1}E(\rho)-E(\rho/2)=W(u_{\rho})-W(u_{\rho/2})=\int_{\rho/2}^{\rho}\frac2r\int_{\partial B_1}(\nabla u_r\cdot x-u_r)^2\,d\HH^{d-1}\,dr
    .\eea Then, if we choose $\eps>0$ sufficiently small such that $C\eps\le 1$, the previous inequality implies that $$2\overline\eps E(\rho/2)\le C\eps(E(\rho)-E(\rho/2))\le E(\rho)-E(\rho/2).$$  Rearranging the terms and using that $\overline\eps\le1/2$, we have $$E(\rho/2)\le \frac{1}{1+2\overline \eps}E(\rho)\le (1-\overline \eps)E(\rho),$$ concluding the proof.
\end{proof}

In order to prove the uniqueness result in \cref{thm:uniqueness}, we apply the epiperimetric inequality \cref{thm:epiperimetric-inequality} at every scale.
\begin{proposition}\label{prop:application-epiperimetric}
    Let $u\in H^1(B_1)$ be a variational solution of the one-phase problem. Suppose that $b$ is a blow-up of $u$ at $0\in\partial\Omega_u$, and $b$ is a one-phase cone with isolated singularity. Then, up to a rescaling, $u_\rho$ satisfies the hypotheses of \cref{thm:epiperimetric-inequality} for every $\rho\in(0,1]$.
\end{proposition}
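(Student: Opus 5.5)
We need to verify the three hypotheses of \cref{thm:epiperimetric-inequality} for $u_\rho$ at every $\rho\in(0,1]$, after replacing $u$ by $u_{r_1}$ for a suitable small $r_1>0$. The condition $\Theta(u,0)\ge W(b)$ is automatic and scale invariant. Indeed, if $u_{r_j}\to b$ along some $r_j\to0^+$, then by \cref{lemma:compactness-kw} the convergence is strong in $H^1(B_1)$, and the positivity sets converge in $L^1_{\rm loc}$ since $b\not\equiv0$. Hence $W(u_{r_j})\to W(b)$, and by monotonicity $\Theta(u,0)=\lim_{r\to0}W(u_r)=W(b)$. In particular $E(\rho)=W(u_\rho)-W(b)\ge0$ for every $\rho$, and $E(\rho)\to0$ as $\rho\to0$. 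This already gives the energy closeness $|W(u_\rho)-W(b)|\le\delta$ for every $\rho\le r_1$, provided $r_1$ is small.

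The main difficulty is the $L^2(\partial B_1)$ closeness at \emph{all} scales. We only know it along the sequence $r_j$, and a priori the rescalings could drift away from $b$ between these scales. I would control the drift by a continuity (bootstrap) argument in which the epiperimetric inequality is applied iteratively on dyadic scales. Fix $\delta$ from \cref{thm:epiperimetric-inequality} and a small $\eta\ll\delta$. Choose $j$ large so that $\|u_{r_j}-b\|_{L^2(\partial B_1)}\le\eta$ and $E(r_j)\le\eta$, and rescale so that $r_j=1$. Let $k^\ast$ be the largest integer such that the hypotheses hold at all scales $2^{-k}$, $k<k^\ast$. On those scales, \cref{thm:epiperimetric-inequality} gives $E(2^{-k-1})\le(1-\eps E(2^{-k-1})^\gamma)E(2^{-k})$. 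The standard ODE-type iteration then yields $E(2^{-k})\le C(E(1)^{-\gamma}+\eps\gamma k)^{-1/\gamma}$, which is summable in the square-root sense: $\sum_k E(2^{-k})^{1/2}$ is finite, and its size is controlled by a quantity tending to $0$ as $E(1)\to0$. This uses $\gamma<1$, so that $1/(2\gamma)>1/2$. The borderline case needs a slightly more careful telescoping, as in Simon's argument: one uses $E^{1/2}(2^{-k})-E^{1/2}(2^{-k-1})\gtrsim E(2^{-k})-E(2^{-k-1})$ divided by $E^{1/2-\gamma}$, and bounds the sum of $(E(2^{-k})-E(2^{-k-1}))^{1/2}$ by a power of $E(1)$. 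When $\gamma=0$ the series is geometric.

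Combining this with \eqref{eq:inequality-with-log} applied on each dyadic interval,
$$\|u_{2^{-k}}-b\|_{L^2(\partial B_1)}\le \|u_1-b\|_{L^2(\partial B_1)}+C\sum_{i<k}\big(E(2^{-i})-E(2^{-i-1})\big)^{1/2}\le \eta+C\,\omega(\eta),$$
with $\omega(\eta)\to0$ as $\eta\to0$. Choosing $\eta$ so that $\eta+C\omega(\eta)<\delta/2$ shows that the hypothesis also holds at scale $2^{-k^\ast}$, so $k^\ast=\infty$. Intermediate scales $\rho\in(2^{-k-1},2^{-k})$ are handled by one more application of \eqref{eq:inequality-with-log}, with $\log(\rho_2/\rho_1)\le\log 2$ and $E\le\eta$. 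This gives all hypotheses for every $\rho\in(0,1]$ after the rescaling.

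The step I expect to be the main obstacle is the summability bound for $\sum_k(E(2^{-k})-E(2^{-k-1}))^{1/2}$ in the logarithmic case $\gamma>0$. I would obtain it by Cauchy–Schwarz with weights $k^{1+s}$ against the decay $E(2^{-k})\lesssim k^{-1/\gamma}$, and must check that the resulting bound is small uniformly in terms of $E(1)$.
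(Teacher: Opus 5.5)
Your proof is correct. It shares the paper's skeleton: a continuity argument in which \cref{thm:epiperimetric-inequality} is iterated on dyadic scales, with the drift of $u_\rho$ controlled by \eqref{eq:inequality-with-log}. The difference is in how you make the accumulated drift small.

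The paper uses two parameters. It fixes $\overline r$ small and then $\delta_1=\delta_1(\overline r)$. On $[\overline r/2,1]$ it uses only the crude bound $C(\log(2/\overline r))^{1/2}\delta_1^{1/2}$. Below $\overline r$ it uses the decay $e_k\le Ck^{-1/\gamma}$, which does not depend on $e_0$, to get the tail bound $\|u_r-u_{\overline r}\|_{L^2(\partial B_1)}\le C|\log\overline r|^{-\alpha}$.

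You instead keep the dependence on $e_0=E(1)$ in \cref{lemma:sequence-lemma} and show that the total drift is a positive power of $E(1)$. This gives a one-parameter, Simon-type stability statement with an explicit modulus in $\eta$. The paper's route never has to track $e_0$ in the summation, and it reuses the same tail estimate for the rate in \cref{thm:uniqueness}.

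One slip: $\sum_k E(2^{-k})^{1/2}$ need not be finite. With $E(2^{-k})\sim k^{-1/\gamma}$ it converges only for $\gamma<1/2$, while $\gamma=1-2\theta$ may lie anywhere in $[0,1)$. This is harmless, since your display only involves the increments.

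The step you flag as the main obstacle does close for all $\gamma\in[0,1)$. Set $e_k:=E(2^{-k})$ and $\beta:=(1-\gamma)/2$.
\begin{itemize}
\item If $e_{k+1}\ge e_k/2$, then $e_k-e_{k+1}\ge \eps e_{k+1}^\gamma e_k\ge c\,e_k^{1+\gamma}$. Concavity of $t\mapsto t^\beta$ then gives $(e_k-e_{k+1})^{1/2}\le C e_k^{\beta-1}(e_k-e_{k+1})\le C(e_k^\beta-e_{k+1}^\beta)$.
\item If $e_{k+1}<e_k/2$, then $(e_k-e_{k+1})^{1/2}\le e_k^{1/2}\le e_k^\beta\le C(e_k^\beta-e_{k+1}^\beta)$.
\end{itemize}
Summing gives $\sum_k(e_k-e_{k+1})^{1/2}\le C e_0^{(1-\gamma)/2}$, uniformly in the number of steps.

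Your weighted Cauchy--Schwarz with $0<s<1/\gamma-1$ also works. Summing by parts and splitting at $k\approx e_0^{-\gamma}$ gives $\sum_k(k+1)^{1+s}(e_k-e_{k+1})\le Ce_0^{1-\gamma(1+s)}$. This yields the same conclusion with the weaker exponent $(1-\gamma(1+s))/2$.
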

We first prove the following convergence sequence lemma.
\begin{lemma}\label{lemma:sequence-lemma}
    Let $\{e_k\}$ be a sequence of non-increasing real numbers such that $e_k\in[0,1]$ and $e_0>0$. Let $k_0\in\N$ and suppose that \be\label{eq:hyp-sequence}e_{k+1}\le (1-\eps e_{k+1}^\gamma)e_k\quad\text{for every } k=0,\ldots ,k_0, \ee for some $\gamma\in(0,1)$ and $\eps\in(0,1/2)$. Then \be\label{inductive}e_k\le({e_0^{-\gamma}+ck})^{-\frac1\gamma}\quad\text{for every } k=0,\ldots, k_0,\ee for some constant $c>0$ depending only on $\gamma$ and $\eps$. 
\end{lemma}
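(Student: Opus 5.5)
The plan is to prove \eqref{inductive} by induction on $k$, and to obtain the one-step inequality by studying the increments of $x\mapsto x^{-\gamma}$. For $k=0$ the claim is an equality. Assume it holds for some $k<k_0$ and aim to show
\[
e_{k+1}^{-\gamma}\ge e_k^{-\gamma}+c .
\]
Iterating this gives $e_{k}^{-\gamma}\ge e_0^{-\gamma}+ck$, which is \eqref{inductive}. If $e_{k+1}=0$ there is nothing to prove, since the bound is trivially satisfied, and then all later terms vanish by monotonicity. So assume $e_{k+1}>0$, and hence $e_k>0$ as well.

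From \eqref{eq:hyp-sequence} I get $e_k\ge e_{k+1}/(1-\eps e_{k+1}^\gamma)$. Raising to the power $-\gamma$ gives
\[
e_k^{-\gamma}\le e_{k+1}^{-\gamma}(1-\eps e_{k+1}^\gamma)^{\gamma}.
\]
Write $s:=\eps e_{k+1}^\gamma\in(0,1/2)$, which holds because $e_{k+1}\le 1$ and $\eps<1/2$. For $\gamma\in(0,1)$, concavity of $x\mapsto x^\gamma$ (or Bernoulli's inequality) gives $(1-s)^\gamma\le 1-\gamma s$. Therefore
\[
e_k^{-\gamma}\le e_{k+1}^{-\gamma}-\gamma\eps e_{k+1}^{-\gamma}e_{k+1}^{\gamma}=e_{k+1}^{-\gamma}-\gamma\eps .
\]
This is exactly the desired increment, with $c=\gamma\eps$. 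Summing over the steps from $0$ to $k$, or equivalently using the induction hypothesis, yields $e_{k+1}^{-\gamma}\ge e_0^{-\gamma}+c(k+1)$, and hence \eqref{inductive} for $k+1$.

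The only point needing care is the elementary inequality $(1-s)^\gamma\le 1-\gamma s$ for $s\in[0,1)$ and $\gamma\in(0,1]$. It follows from the concavity of $t\mapsto t^\gamma$: the tangent line at $t=1$ lies above the graph. The remaining care is in the degenerate cases where some $e_k$ vanishes, and these are handled as above.

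I expect no real obstacle. Monotonicity of the sequence is not even needed beyond guaranteeing $e_k>0$ whenever $e_{k+1}>0$, and that already follows from \eqref{eq:hyp-sequence}.
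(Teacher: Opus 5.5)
Your proof is correct. It follows the same overall scheme as the paper: pass to $a_k:=e_k^{-\gamma}$, prove a uniform additive increment $a_{k+1}\ge a_k+c$, and sum.

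The difference lies in how the one-step increment is obtained.

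\textbf{The paper's route.} It applies the convex Bernoulli inequality $(1-s)^{-\gamma}\ge 1+\gamma s$ to the factor multiplying $e_k$. This gives the implicit relation $a_{k+1}\ge a_k+\gamma\eps\, a_k/a_{k+1}$. The paper then solves the resulting quadratic inequality in $a_{k+1}$ and uses $\sqrt{1+t}\ge 1+t/4$ together with $a_k\ge 1$. It ends with $c=\gamma\eps/2$.

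\textbf{Your route.} You keep the factor on the side of $e_{k+1}$ and use the concave inequality $(1-s)^{\gamma}\le 1-\gamma s$. The product $e_{k+1}^{-\gamma}\cdot\eps e_{k+1}^{\gamma}=\eps$ then cancels exactly, and $a_{k+1}\ge a_k+\gamma\eps$ follows in one line.

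\textbf{What this buys.} This is the natural route for a hypothesis whose decay factor involves $e_{k+1}$ rather than $e_k$. It is shorter and gives the better constant $c=\gamma\eps$. As you observe, it does not need the monotonicity of $\{e_k\}$. It also does not need the bounds $e_{k+1}\le 1$ or $\eps<1/2$: whenever $e_{k+1}>0$, the hypothesis $e_{k+1}\le(1-\eps e_{k+1}^\gamma)e_k$ with $e_k\ge 0$ already forces $\eps e_{k+1}^\gamma<1$ and $e_k>0$.

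Your handling of the degenerate case $e_{k+1}=0$ inside the induction is also fine.
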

\begin{proof}
If $e_{\overline k}=0$ for some $\overline k\in\{1,\ldots,k_0\}$, then, by monotonicity of $\{e_k\}$, we have $e_k=0$ and \eqref{inductive} holds for every $k=\overline k,\ldots,k_0$. Then, the argument below applies up to the first vanishing index, and thus, without loss of generality, we can suppose that $e_k\in(0,1]$ for every $k=0,\ldots,k_0$.

Using the trivial inequality $(1-\eps e_{k+1}^\gamma)^{-\gamma}\ge 1+\gamma \eps e_{k+1}^\gamma$
and setting $a_k:=e_k^{-\gamma}$, then \eqref{eq:hyp-sequence} can be rewritten, for every $k=0,\ldots,k_0-1$, as $$a_{k+1}\ge a_{k}(1+\gamma\eps e_{k+1}^\gamma)=a_{k}+\frac{\gamma\eps a_k}{a_{k+1}} $$ Thus $a_{k+1}^2-a_{k}a_{k+1}-\gamma\eps a_k\ge0,$ implying that \bea a_{k+1}&\ge\frac12\left(a_k+\left(a_k^2+4\gamma\eps a_k\right)^{1/2}\right)=\frac12\left(a_k+a_k\left(1+\frac{4\eps\gamma}{a_k}\right)^{1/2}\right)\eea
Using that $(1+t)^{1/2}\ge 1+t/4$ for every $t\in[0,8]$, we have  
\bea a_{k+1}\ge\frac12\left(a_k+a_k\left(1+\frac{\eps\gamma}{a_k}\right)\right)=a_k+\frac{\gamma\eps}2,
\eea where we used that $a_k\ge1$ and $\eps\in(0,1/2)$. Summing in $k$, this implies that $$a_{k}\ge a_0+\frac{\gamma\eps}2k\quad\text{for every } k=0,\ldots, k_0.$$ In terms of $e_k$, this is exactly \eqref{inductive}.
\end{proof}
\begin{proof}[Proof of \cref{prop:application-epiperimetric}] 
    Let $\overline r>0$ and $\delta_1=\delta_1(\overline r)>0$ small to be chosen later.
Since $b$ is a blow-up of $u$ at $0$, then, up to rescaling $u$, we can assume that \bea\label{eq:eq2final}\|u-b\|_{L^2(\partial B_1)}\le \delta_1 \quad\text{and}\quad 0\le W(u)-W(b) \le \delta_1
.\eea
Then, by \eqref{eq:inequality-with-log} applied to $r_1=r$ and $r_2=1$, and using that $W(u_r)\ge W(b)$ by the monotonicity formula \cref{prop:monotonicity-formula}, we obtain, for every $r\in[\overline r/2,1]$,
\be\label{utiledopo-application}\|u_r-b\|_{L^2(\partial B_1)}\le \|u-u_r\|_{L^2(\partial B_1)}+\|u-b\|_{L^2(\partial B_1)}\le C\left( \log\frac2{\overline r}\right)^{1/2}\delta_1^{1/2}+\delta_1\le C({\overline r})\delta_1^{1/2},\ee 
for some $C({\overline r})>0$ depending on $\overline r$. Moreover, for every $r\in (0,1)$ we have $$0\le W(u_r)-W(b)\le \delta_1,$$ again by the monotonicity formula.
We may also assume that $W(u)-W(b)>0$, otherwise the monotonicity formula implies that $u$ is $1$-homogeneous, and hence $u\equiv b$, since $b$ is a blow-up of $u$ at the origin.

Let $\delta>0$ and $\gamma\in[0,1)$ be the constants appearing in \cref{thm:epiperimetric-inequality}. 
We suppose that $\gamma\in(0,1)$. The case $\gamma=0$ follows similarly, with a simpler argument.
We choose $\delta_1=\delta_1(\overline r)>0$ sufficiently small so that $\max\{C(\overline r)\delta_1^{1/2},\delta_1\}\le \delta$. Then, $u$ satisfies the hypotheses of \cref{thm:epiperimetric-inequality} for every $\rho\in[\overline r/2,1]$.

Let $r_0\in[0,\overline r/2)$ be the minimum of the radii $\rho$ such that the hypotheses of \cref{thm:epiperimetric-inequality} are satisfied for every $\rho\in(r_0,1]$.
We claim that $r_0=0$. Suppose, by contradiction, that $r_0>0$. 
We choose $k_0\in\N$ such that $2^{-(k_0+1)}\le r_0 <2^{-k_0}$. Then, since
$r_0 < 2^{-k_0}$, we can apply the epiperimetric inequality in \cref{thm:epiperimetric-inequality} for $\rho=2^{-k}$, for every $k=0,\ldots,k_0$. By \cref{thm:epiperimetric-inequality}, we obtain that the sequence $e_k:=E(2^{-k})=W(u_{2^{-k}})-W(b)$ satisfies the hypotheses of \cref{lemma:sequence-lemma}, for $k=0,\ldots,k_0$, and thus $$e_k\le Ck^{-\frac1\gamma}\quad\text{for every }k=1,\ldots,k_0.$$
Therefore, for every $r\in(r_0,1/2)$, we choose $k\in\{1,\ldots,k_0\}$ such that $2^{-(k+1)}< r\le 2^{-k}$. Then, by the previous estimate and the monotonicity formula \cref{prop:monotonicity-formula}, we obtain the following decay of the Weiss' energy $$0\le W(u_r)-W(b)\le e_k\le Ck^{-\frac1\gamma}\le C|\log r|^{-\frac1\gamma},\quad\text{for every }r\in(r_0,1/2).$$ This decay and a standard dyadic argument (see e.g.~\cite[Proposition 5.1]{csv18}) give, for some $\alpha\in(0,1)$, \be\label{eq:application-epi}\|u_r-u_{\overline r}\|_{L^2(\partial B_1)}\le {C}|\log \overline r|^{-\alpha}\quad\text{for every }r\in [r_{0},\overline r).\ee 
Thus, by \eqref{eq:inequality-with-log}, \eqref{utiledopo-application} and  \eqref{eq:application-epi}, for every $r'\in(r_{0}/2,r_{0}]$, we have \bea \|u_{r'}-b\|_{L^2(\partial B_1)}&\le \|u_{r'}-u_{r_{0}}\|_{L^2(\partial B_1)}+\|u_{r_{0}}-u_{\overline r}\|_{L^2(\partial B_1)}+\|u_{\overline r}-b\|_{L^2(\partial B_1)}\\&\le C\log(2)^\frac12\delta_1^{1/2}+C|\log\overline r|^{-\alpha}+C(\overline r)\delta_1^{1/2}.\eea 
Choosing first $\overline r>0$ sufficiently small and then $\delta_1=\delta_1(\overline r)>0$ small enough, we can ensure that the right-hand side above is bounded by $\delta$. Therefore, $u$ satisfies the hypotheses of \cref{thm:epiperimetric-inequality} for $\rho\in(r_0/2,r_0]$, contradicting the definition of $r_0$.
\end{proof}
\begin{proof}[Proof of \cref{thm:uniqueness}] By \cref{prop:application-epiperimetric}, the epiperimetric inequality in \cref{thm:epiperimetric-inequality} can
be applied at every scale. Then, the uniqueness of the blow-up limit and the corresponding $L^2(\partial B_1)$ logarithmic rate of convergence follow as in \eqref{eq:application-epi}. The $L^\infty$ rate of convergence is a consequence of an interpolation inequality (see e.g. \cite[Equation 9.1]{carduccitortoneuniqueness}). 
Moreover, arguing as in the proof of \cite[Theorem 1]{esv}, we obtain that $\partial\Omega_u\cap B_{r_0}$ is a $C^{1,\log}$ graph over $\partial\Omega_b\cap B_{r_0}$. 
Finally, if $b$ is integrable, we can use the epiperimetric inequality with $\gamma=0$ to improve the logarithmic convergence to $r^\alpha$, and then the regularity of the graph to $C^{1,\alpha}$. 
\end{proof}
\bibliographystyle{alpha}
\bibliography{FreeBoundary_bib}
\end{document}